\newcommand{\R}{\mathbb{R}}
\newcommand{\D}{\mathbb{D}}
\newcommand{\mbW}{\mathbf{W}}
\newcommand{\mV}{\mathbf{V}}
\newcommand{\mcV}{\mathcal{V}}
\newcommand{\T}{\mathcal{T}}
\newcommand{\tT}{\tilde\T}
\newcommand{\mF}{\mathcal{F}}
\newcommand{\mO}{\mathcal{O}}
\newcommand{\mG}{\mathcal{G}}
\newcommand{\mK}{\mathcal{K}}
\newcommand{\mC}{\mathcal{C}}
\newcommand{\mB}{\mathcal{B}}
\newcommand{\mM}{\mathcal{M}}
\newcommand{\mR}{\mathcal{R}}
\newcommand{\mW}{\mathcal{W}}
\newcommand{\bq}{\begin{equation}}
\newcommand{\eq}{\end{equation}}
\newcommand{\bqs}{\begin{equation*}}
\newcommand{\eqs}{\end{equation*}}
\newcommand{\SOS}[1]{\ifthenelse{\boolean{DisplaySOS}}{{\textcolor{red}{\bf[#1]}}}{}}
\title{Some theoretical results for a class of neural mass equations}
\author[1]{Gr\'egory Faye}
\author[1,2]{Pascal Chossat}
\author[1]{Olivier Faugeras}
\affil[1]{\small NeuroMathComp Laboratory, INRIA, Sophia Antipolis, CNRS, ENS Paris, France}
\affil[2]{\small Dept. of Mathematics, University of Nice Sophia-Antipolis, JAD Laboratory and CNRS, Parc Valrose, 06108 Nice Cedex 02, France}
\begin{document}

\maketitle

\begin{abstract}
We study the neural field equations introduced by Chossat and Faugeras in \cite{chossat-faugeras:09} to  model the representation and the processing of image edges and textures in the hypercolumns of the cortical area V1. The key entity, the structure tensor, intrinsically lives in a non-Euclidean, in effect hyperbolic, space. Its spatio-temporal behaviour is governed by nonlinear integro-differential equations defined on the Poincaré disc model of the two-dimensional hyperbolic space. Using methods from the theory of functional analysis we show the existence and uniqueness of a solution of these equations. In the case of stationary, i.e. time independent, solutions we perform a stability analysis which yields important results on their behavior. We also present an original study, based on non-Euclidean, hyperbolic,  analysis, of a spatially localised bump solution in a limiting case. We illustrate our theoretical results with numerical simulations.
\end{abstract}

{\noindent \bf Keywords:} 
Neural fields; nonlinear integro-differential equations; functional analysis; non-Euclidean analysis; stability analysis; hyperbolic geometry; hypergeometric functions; bumps.\\

{\noindent \bf AMS subject classifications:} 
30F45, 33C05, 34A12, 34D20, 34D23, 34G20, 37M05, 43A85, 44A35, 45G10, 51M10, 92B20, 92C20.
\section{Introduction}

Chossat and Faugeras in \cite{chossat-faugeras:09} have introduced a new and elegant approach to model the processing of image edges and textures in the hypercolumns of area V1 that is based on a nonlinear representation of the image first order derivatives called the structure tensor. They assumed that this structure tensor was represented by neuronal populations in the hypercolumns of V1 that can be described by equation similar to those proposed by Wilson and Cowan \cite{wilson-cowan:72}.\\
Our investigations are motivated by the work of Bressloff, Cowan, Golubitsky, Thomas and Wiener \cite{bressloff-cowan-etal:01,bressloff-cowan-etal:02} on the spontaneous occurence of hallucinatory patterns under the influence of psychotropic drugs and the further studies of Bressloff and Cowan \cite{bressloff-cowan:02,bressloff-cowan:02b,bressloff-cowan:03}. We hardly think that the natural spatial extension of our model would lead to an exciting anlysis of hyperbolic hallucinatory patterns. But, this requires first to better understand the a-spatial model and this is the subject of this present work. The a-spatial model can also be linked to the work by Ben-Yishai \cite{ben-yishai-bar-or-etal:95} and Hansel, Sompolinsky \cite{hansel-sompolinsky:97} on the ring model of orientation.\\
The aim of this paper is to present a general and rigorous mathematical framework for the modeling of neuronal populations in one hypercolumn of V1 by the structure tensor which is based on miscellaneous tools of functional analysis. We illustrate our results with numerical experiments. In section \ref{section:model} we briefly introduce the equations, in section \ref{section:exun} we analyse the problem of the existence and uniqueness of their solutions. In section \ref{section:stationary} we deal with stationary solutions. In section \ref{section:bump}, we present an analysis of what we called a hyperbolic radially symmetric stationary-pulse in a limiting case. In the penultimate, we present some numerical simulations of the solutions. We conclude in section \ref{section:conclusion}.

\section{The model}\label{section:model}
We recall that the structure tensor is a way of representing the edges and textures of a 2D image $\mathcal{I}(x,y)$ \cite{bigun-granlund:87,knutsson:89}. Moreover, a structure tensor can be seen as a $2\times2$ symmetric positive matrix.\\
We assume that a hypercolumn of V1 can represent the structure tensor in the receptive field of its neurons as the average membrane potential values of some of its membrane pouplations. Let $\T$ be a structure tensor. The average potential $V(\T,t)$ of the column has its time evolution that is governed by the following neural mass equation adapted from \cite{chossat-faugeras:09} where we allow the connectivity function $W$ to depend upon the time variable $t$ and we integrate over the set of $2\times2$ symmetric definite-positive matrices:
\bq \left\{ \begin{array}{lcl}
\partial_t V(\T, t)&=&-\alpha V(\T, t)+\int_{\text{SPD(2)}}W(\T,\T',t)S(V(\T',t))d\T'+I(\T,t)\quad \forall t > 0\\
V(\T,0)&=&V_0(\T)
\end{array}
\right.
\label{eq:neurmass}
\eq
 The nonlinearity $S$ is a sigmoidal function which may be expressed as:
\bqs
S(x)=\dfrac{1}{1+e^{-\mu x}}
\eqs
where $\mu$ describes the stiffness of the sigmoid. $I$ is an external input.\\
The set $\text{SPD(2)}$ is the set of $2\times2$ symmetric positive-definite matrices. It can be seen as a foliated manifold by way of the set of special symmetric positive definite matrices $\text{SSPD(2)}=\text{SPD(2)}\cap \text{SL}(2,\R)$. Indeed, we have: $\text{SPD(2)}\overset{hom}{=}\text{SSPD(2)}\times\R^{+}_{*}$. Furthermore, $\text{SSPD(2)}\overset{isom}{=}\D$, where $\D$ is the Poincare Disk, see e.g. \cite{chossat-faugeras:09}. As a consequence we use the following foliation of $\text{SPD(2)}$: $\text{SPD(2)}\overset{hom}{=}\D\times\R^{+}_{*}$, which allows us to write for all $\T\in\text{SPD(2)}$, $\T=(z,\Delta)$ with $(z,\Delta)\in\D\times\R^{+}_{*}$. $\T$, $z$ and $\Delta$ are related by the relation  $\det(\T)=\Delta^{2}$ and the fact that  $z$ is the representation in $\D$ of $\tilde{\T} \in \text{SSPD(2)}$ with $\T=\Delta\tilde{\T}$. \\
It is well-known \cite{katok:92} that $\D$ (and hence SSPD(2)) is a two-dimensional Riemannian space of constant sectional curvature equal to -1 for the distance noted $d_2$ defined by
\bqs
d_2(z,z')=\text{arctanh}\dfrac{|z-z'|}{|1-\bar{z}z'|}.
\eqs
It follows, e.g. \cite{moakher:05,chossat-faugeras:09}, that SDP(2) is a three-dimensional Riemannian space of constant sectional curvature equal to -1 for the distance noted $d_0$ defined by
\bqs
d_0(\T,\T')=\sqrt{2(\log \Delta-\log \Delta')^2+d_2^{2}(z,z')}
\eqs
As shown in proposition \eqref{prop:volel} of appendix \ref{appendix:volume} it is possible to express the volume element $d\T$ in $(z_1,z_2,\Delta)$ coordinates with $z=z_1+iz_2$:
\bqs
d\T=8\sqrt{2}\dfrac{d\Delta}{\Delta}\dfrac{dz_1dz_2}{(1-|z|^2)^2}
\eqs
We rewrite (\ref{eq:neurmass}) in $(z,\Delta)$ coordinates:
\bqs
\partial_t V(z,\Delta, t)=-\alpha V(z,\Delta, t)+8\sqrt{2}\int_{0}^{+\infty}\int_{\D}W(z,\Delta,z',\Delta',t)S(V(z',\Delta',t))\dfrac{d\Delta'}{\Delta'}\dfrac{dz_1'dz_2'}{(1-|z'|^2)^2}+I(z,\Delta,t)
\eqs
We get rid of the constant $8\sqrt{2}$ by redefining $W$ as $8\sqrt{2}W$.
\bq
\left\{ \begin{array}{lcl}
\partial_t V(z,\Delta, t)&=&-\alpha V(z,\Delta, t)+\int_{0}^{+\infty}\int_{\D}W(z,\Delta,z',\Delta',t)S(V(z',\Delta',t))\dfrac{d\Delta'}{\Delta'}\dfrac{dz_1'dz_2'}{(1-|z'|^2)^2}+I(z,\Delta,t)\\
&&\qquad \qquad \qquad \qquad \qquad \qquad \qquad \qquad \qquad \qquad \qquad \qquad \qquad \qquad \qquad \qquad \qquad\forall t > 0\\
V(z,\Delta,0)&=&V_0(z,\Delta)
\end{array}
\right.
\label{eq:neurco}
\eq
The aim of the following sections is to establish that \eqref{eq:neurco} is well-defined and to give necessary and sufficient conditions on the different parameters in order to prove some results on the existence and uniqueness of a solution of (\ref{eq:neurco}).

\section{The existence and uniqueness of a solution}\label{section:exun}
The aim of this section is to give theoretical and general results of existence and uniqueness of a solution of \eqref{eq:neurmass}.
In the first subsection \ref{subsection:hom} we study the simpler case of homogeneous solutions of (\ref{eq:neurmass}), i.e. of solutions that are constant with respect to the tensor variable $\T$. We then we study in \ref{subsection:semihom} the useful case of the semi-homogeneous solutions of (\ref{eq:neurmass}), i.e. of solutions that depend on the tensor variable but only through its $z$ coordinate in $\D$, and we end up in \ref{subsection:gensol} with the general case.

\subsection{Homogeneous solutions}\label{subsection:hom}

A homogeneous solution to (\ref{eq:neurmass}) is a solution $V$ that does not depend upon the tensor variable $\T$ for a given homogenous input $I(t)$ and a constant initial condition $V_0$. In $(z,\Delta)$ coordinates, a homogeneous solution of (\ref{eq:neurco}) is defined by:
\bqs
\dot V(t)=-\alpha V(t)+\overline{W}(z,\Delta,t)S(V(t))+I(t)
\eqs
where:
\bq
\overline{W}(z,\Delta,t)\overset{def}{=}\int_{0}^{+\infty}\int_{\D}W(z,\Delta,z',\Delta',t)\dfrac{d\Delta'}{\Delta'}\dfrac{dz_1'dz_2'}{(1-|z'|^2)^2}
\label{eq:dblint}
\eq
Hence necessary conditions for the existence of a homogeneous solution are that:
\begin{itemize}
 \item the double integral \eqref{eq:dblint} is convergent,
 \item $\overline{W}(z,\Delta,t)$ does not depend upon the variable $(z,\Delta)$. In that case, we note $\overline{W}(t)$ instead of $\overline{W}(z,\Delta,t)$.
\end{itemize}
 In the special case where $W(z,\Delta,z',\Delta',t)$ is a function of only the distance $d_0$ between $(z,\Delta)$ and $(z',\Delta')$: $$W(z,\Delta,z',\Delta',t)\equiv w\bigg(\sqrt{2(\log \Delta-\log \Delta')^2+d_2^{2}(z,z')},t\bigg)$$ the second condition is satisfied. We postpone the verification of this fact until the following section. To summarize, the homogeneous solutions satisfy the differential equation:
\bq
\left\{ \begin{array}{lcl}
\dot V(t)&=&-\alpha V(t)+\overline{W}(t)S(V(t))+I(t)\quad t > 0\\
V(0)&=&V_0
\end{array}
\right.
\label{eq:homsimp}
\eq

\subsubsection{A first existence and uniqueness result}

Equation (\ref{eq:neurco}) defines a Cauchy problem and we have the following theorem.
\newtheorem{thm}{Theorem}[subsection]
\newtheorem{prop}{Proposition}[subsection]
\begin{thm}
 If the external current $I(t)$ and the connectivity function $\overline{W}(t)$ are continuous on some closed interval $J$ containing $0$, then for all $V_0$ in $\R$, there exists a unique solution of (\ref{eq:homsimp}) defined on a subinterval $J_0$ of $J$ containing $0$ such that $V(0)=V_0$.
\end{thm}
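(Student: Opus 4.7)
The plan is to apply the classical Cauchy–Lipschitz (Picard–Lindelöf) theorem to the ODE
\bqs
\dot V(t)=f(t,V(t)),\qquad f(t,v)\overset{def}{=}-\alpha v+\overline{W}(t)S(v)+I(t).
\eqs
Thus I need to verify the two standard hypotheses: continuity of $f$ in $t$ and local Lipschitz continuity in $v$, uniformly on compact time intervals.

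The first point is immediate: $v\mapsto -\alpha v$ is affine and continuous, $S$ is smooth, and by assumption $t\mapsto \overline{W}(t)$ and $t\mapsto I(t)$ are continuous on $J$, so $f$ is jointly continuous on $J\times\R$. For the Lipschitz estimate, the key observation is that the sigmoid $S(x)=1/(1+e^{-\mu x})$ has derivative $S'(x)=\mu e^{-\mu x}/(1+e^{-\mu x})^2$, which is bounded on $\R$ by $\mu/4$. Hence $S$ is globally Lipschitz with constant $\mu/4$. Combined with the fact that $\overline{W}$ is continuous, hence bounded, on any compact subinterval $K\subset J$, one obtains
\bqs
|f(t,v)-f(t,w)|\le \Bigl(\alpha+\tfrac{\mu}{4}\sup_{t\in K}|\overline{W}(t)|\Bigr)|v-w|,
\eqs
i.e.\ $f$ is Lipschitz in $v$ uniformly for $t\in K$.

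With these two properties established, the Cauchy–Lipschitz theorem in its local form yields the existence of some open interval $J_0\subset J$ containing $0$ on which a unique $C^1$ solution $V$ of \eqref{eq:homsimp} exists with $V(0)=V_0$. Equivalently, one may write the equation in Volterra integral form
\bqs
V(t)=V_0+\int_0^t\bigl(-\alpha V(s)+\overline{W}(s)S(V(s))+I(s)\bigr)\,ds
\eqs
and apply the Banach fixed-point theorem on $C^0(J_0,\R)$ for $J_0$ sufficiently small, the contraction constant being controlled by the Lipschitz bound above.

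There is no real obstacle here: the only mildly non-routine point is the explicit bound $\|S'\|_\infty\le\mu/4$ that makes $S$ globally Lipschitz (so no a priori bound on $V$ is needed to get the local Lipschitz estimate for $f$). The statement of the theorem is deliberately local — one only gets existence on a subinterval $J_0$ — and later continuation/boundedness arguments, not requested here, would be needed to extend $V$ to all of $J$.
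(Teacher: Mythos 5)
Your proof is correct and follows essentially the same route as the paper: both define $f(t,x)=-\alpha x+\overline{W}(t)S(x)+I(t)$, check joint continuity, obtain the Lipschitz estimate $|f(t,x)-f(t,y)|\leq(\alpha+CS'_m)|x-y|$ from the boundedness of $S'$ and of $\overline{W}$ on the compact interval, and invoke the Cauchy--Lipschitz (Picard--Lindel\"of) theorem. Your explicit constant $S'_m=\mu/4$ and the remark on the Volterra/fixed-point formulation are just slightly more detailed versions of the same argument.
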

\begin{proof}
 It is a direct application of Cauchy' theorem on differential equations. We consider the mapping $f:J\times \R\rightarrow \R$ defined by:
\bqs
f(t,x)=-\alpha x+\overline{W}(t)S(x)+I(t)
\eqs
It is clear that $f$ is continuous from $J \times \R$ to $\R$. We have for all $x,y\in\R$ and $t\in J$:
\bqs
| f(t,x)-f(t,y)| \leq \alpha | x-y|+|\overline{W}(t)|S'_m |x-y|
\eqs
where $S'_m=\sup_{x\in\R}|S'(x)|$.\\
Since, $\overline{W}$ is continuous on the compact interval $J$, it is bounded there by $C>0$ and:
\bqs
| f(t,x)-f(t,y)| \leq (\alpha +CS'_m) |x-y|
\eqs
\end{proof}
We can extend this result to the whole time real line if $I$ and $\overline{W}$ are continuous on $\R$. 
\begin{prop}
 If $I$ and $\overline{W}$ are continuous on $\R$, then for all $V_0$ in $\R$, there exists a unique solution of (\ref{eq:homsimp}) defined on $\R$ such that $V(0)=V_0$.
\end{prop}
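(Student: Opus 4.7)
The plan is to combine the local existence/uniqueness result of the previous theorem with a non-explosion argument to extend the solution globally. Since the Lipschitz constant $\alpha + C S'_m$ obtained in the proof of the previous theorem depends only on the sup-norm of $\overline{W}$ over the compact interval on which we work, we are free to apply that theorem on any $[-T,T]$, $T>0$. The only way such a local solution could fail to extend to all of $[-T,T]$ would be by blowing up in finite time, so the heart of the argument is to rule this out.

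First I would invoke the maximal-solution version of Cauchy--Lipschitz: for each $V_0\in\R$ and each $T>0$ there is a unique maximal solution $V$ of \eqref{eq:homsimp} defined on some open subinterval $(t_-,t_+)\subset[-T,T]$ containing $0$, and if $t_+<T$ (respectively $t_->-T$) then $|V(t)|\to+\infty$ as $t\to t_+$ (resp.\ $t\to t_-$). Next I would derive an a priori bound. Because $S$ is a sigmoid, $|S(x)|\leq 1$ for all $x\in\R$, and on the compact set $[-T,T]$ the continuous functions $\overline{W}$ and $I$ are bounded, say by a constant $M_T$. Then the right-hand side of \eqref{eq:homsimp} satisfies
\bqs
|\dot V(t)|\leq \alpha|V(t)|+2M_T, \qquad t\in(t_-,t_+).
\eqs
A direct application of Gr\"onwall's lemma yields
\bqs
|V(t)|\leq |V_0|e^{\alpha|t|}+\frac{2M_T}{\alpha}\bigl(e^{\alpha|t|}-1\bigr), \qquad t\in(t_-,t_+),
\eqs
so $V$ remains bounded on any finite subinterval of $[-T,T]$. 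This contradicts the blow-up alternative unless $(t_-,t_+)=[-T,T]$, hence the solution exists on the whole of $[-T,T]$.

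Letting $T\to+\infty$ and using uniqueness (two solutions agreeing at $0$ must agree on the intersection of their intervals of definition, by the local uniqueness of the previous theorem applied at every point where they coincide) one glues the solutions on the family $\{[-T,T]\}_{T>0}$ into a unique solution defined on all of $\R$. The only nontrivial point in the whole argument is the non-explosion bound, and it is essentially free thanks to the global boundedness of the sigmoid $S$; no additional hypothesis on $I$ or $\overline{W}$ beyond continuity on $\R$ is required.
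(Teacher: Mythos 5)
Your proposal is correct and follows essentially the same route as the paper: both reduce global existence to a non-explosion argument for the maximal solution, using the boundedness of the sigmoid to obtain an a priori bound that contradicts the blow-up / escape-from-compacts alternative. The only cosmetic difference is that the paper derives its bound by integrating the equation against the factor $e^{-\alpha(t-u)}$, which makes Gr\"onwall unnecessary because the term $\overline{W}(u)S(V(u))$ is already bounded, whereas you use the cruder inequality $|\dot V|\leq \alpha|V|+2M_T$ followed by Gr\"onwall; both yield a finite bound on every finite interval and hence the same conclusion.
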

\begin{proof}
 We have already shown the following inequality:
\bqs
| f(t,x)-f(t,y)| \leq \alpha | x-y|+|\overline{W}(t)|S'_m |x-y|
\eqs
Then $f$ is locally Lipschitz with respect to its second argument. Let $V$ be a maximal solution on $J_0$ and we denote by $\beta$ the upper bound of $J_0$. We suppose that $\beta<+\infty$. Then we have for all $t\geq 0$:
\bqs
V(t)= e^{-\alpha t}V_0+\int_0^te^{-\alpha (t-u)}\overline{W}(u)S(V(u))du +\int_0^te^{-\alpha (t-u)}I(u)du
\eqs
\bqs
\Rightarrow |V(t)|\leq |V_0| +S^m\int_0^\beta e^{\alpha u}|\overline{W}(u)|du+\int_0^\beta e^{\alpha u}|I(u)|du \quad \forall t\in[0,\beta]
\eqs
where $S^m=\sup_{x\in\R}|S(x)|$.\\
But theorem \ref{thm:bouts} ensures that it is impossible, then $\beta=+\infty$. The same proof with the lower bound of $J_0$ gives the conclusion.
\end{proof}

\subsubsection{Simplification of (\ref{eq:dblint}) in a special case}\label{subsubsection:simpl}

\paragraph{Invariance}In the previous section, we have stated that in the special case where $W$ was a function of the distance between two points in $\D\times\R_{*}^{+}$, then $\overline{W}(z,\Delta,t)$ did not depend upon the variables $(z,\Delta)$. We now prove this assumption.
\newtheorem{lem}{Lemma}[subsection]
\begin{lem}\label{lem:invariance}
When $W$ is only a function of $d_0(\T,\T')$, then $\overline{W}$ does not depend upon the variable $\T$.
\end{lem}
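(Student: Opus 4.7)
The plan is to exploit two invariance properties of the manifold $\D\times \R_*^+$: the distance $d_0$ is invariant under a natural group of transformations that acts transitively on $\D\times \R_*^+$, and the volume element $\frac{d\Delta'}{\Delta'}\frac{dz_1'dz_2'}{(1-|z'|^2)^2}$ is invariant under the same group. Once both invariances are established, the result will follow from a single change-of-variables argument.

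First, I would recall from standard hyperbolic geometry (e.g.~\cite{katok:92}) that for each $z \in \D$ the Möbius map $\phi_z(w) = \frac{w+z}{1+\bar z w}$ is an isometry of $(\D, d_2)$ that sends $0$ to $z$ and preserves the hyperbolic area element $\frac{dw_1 dw_2}{(1-|w|^2)^2}$. On the $\R_*^+$ factor, the multiplicative translation $\psi_\Delta : \delta \mapsto \Delta \delta$ satisfies $|\log(\Delta\delta) - \log(\Delta\delta')| = |\log\delta - \log\delta'|$ and preserves the Haar measure $\frac{d\delta}{\delta}$.

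Next, I would form the product map $\Phi(w,\delta) = (\phi_z(w), \psi_\Delta(\delta))$. From the explicit formula $d_0(\T,\T') = \sqrt{2(\log\Delta-\log\Delta')^2 + d_2^2(z,z')}$ it follows immediately that $\Phi$ is an isometry of $(\D\times\R_*^+, d_0)$ sending $(0,1)$ to $(z,\Delta)$, and since the measure factorizes, $\Phi$ preserves it as well. Performing the change of variables $\T' = \Phi(\T'')$ in the integral defining $\overline{W}(z,\Delta,t)$, and using that $W$ depends on its spatial arguments only through $d_0(\T,\T')$, the integrand transforms to $W$ evaluated at $d_0((0,1), \T'')$ while the measure is unchanged. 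This gives $\overline{W}(z,\Delta,t) = \overline{W}(0,1,t)$, independent of $(z,\Delta)$.

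The computation is essentially a chain of invariance checks; the only mildly delicate point is verifying explicitly that the hyperbolic area element on $\D$ is preserved under $\phi_z$, which is a one-line Jacobian computation using the identities $|\phi_z'(w)| = \frac{1-|z|^2}{|1+\bar z w|^2}$ and $1 - |\phi_z(w)|^2 = \frac{(1-|z|^2)(1-|w|^2)}{|1+\bar z w|^2}$, and is classical. No further analytic effort is required.
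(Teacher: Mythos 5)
Your proof is correct. It follows the same overall strategy as the paper's — split the integral into its $\R_*^+$ and $\D$ factors, handle the first by the multiplicative change of variable $\Delta'\mapsto \Delta\Delta'$, and handle the second by transporting the base point $O$ to $z$ with an isometry under which the measure $dm$ is invariant — but you implement the $\D$-part by a genuinely different computation. The paper writes $z=n_sa_r\cdot O$ in horocyclic coordinates and performs a chain of substitutions relying on the group relation $n_{-xe^{2r}}a_r\cdot O=a_rn_{-x}\cdot O$ imported from Helgason, i.e. it works with the $NA$ part of the Iwasawa decomposition; you instead use the single Möbius automorphism $\phi_z(w)=(w+z)/(1+\bar z w)$ and verify the invariance of $dm$ directly from the Jacobian identities $|\phi_z'(w)|=(1-|z|^2)/|1+\bar z w|^2$ and $1-|\phi_z(w)|^2=(1-|z|^2)(1-|w|^2)/|1+\bar z w|^2$, both of which are correct. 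Your route is more self-contained and makes the underlying mechanism — transitivity of the isometry group together with invariance of the Riemannian volume — completely explicit in one change of variables on the product space. What the paper's horocyclic computation buys in exchange is reusability: essentially the same substitution template is invoked again in the continuity argument for $f$ on $\mC_b(\D)$ and in the computation showing that $e_{\lambda,b}$ is an eigenfunction of $\mG_l^{sh}$, where the $NA$ coordinates interact naturally with the kernel $e^{(-i\lambda+1)<z,b>}$, whereas a Möbius-based argument would not transfer as directly to those later proofs.
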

\begin{proof}
We work in $(z,\Delta)$ coordinates and we begin by rewritting the double integral \eqref{eq:dblint} for all $(z,\Delta)\in\R_{*}^+\times\D$:
\bqs
\overline{W}(z,\Delta,t)=\int_{0}^{+\infty}\int_{\D}W\bigg(\sqrt{2(\log \Delta-\log \Delta')^2+d_2^{2}(z,z')},t\bigg)\dfrac{d\Delta'}{\Delta'}\dfrac{dz_1'dz_2'}{(1-|z'|^2)^2}
\eqs
The change of variable $\Delta'\rightarrow \frac{\Delta'}{\Delta}$ yields:
\bqs
\overline{W}(z,\Delta,t)=\int_{0}^{+\infty}\int_{\D}W\bigg(\sqrt{2(\log \Delta')^2+d_2^{2}(z,z')},t\bigg)\dfrac{d\Delta'}{\Delta'}\dfrac{dz_1'dz_2'}{(1-|z'|^2)^2}
\eqs
And it establishes that $\overline{W}$ does not depend upon the variable $\Delta$. To finish the proof, we show that the following integral does not depend upon the variable $z\in\D$:
\bq
\Xi(z)=\int_{\D}f(d_2(z,z'))\dfrac{dz_1'dz_2'}{(1-|z'|^2)^2}
\label{eq:Xi}
\eq
where $f$ is a real-valued function such that $\Xi(z)$ is well defined.\\
We express $z$ in horocyclic coordinates: $z=n_s a_r .O$ (see appendix \ref{section:isom}) and (\ref{eq:Xi}) becomes:
\bqs
\Xi(z)=\int_{\R}\int_{\R}f(d_2(n_{s}a_{r}.O,n_{s'}a_{r'}.O))e^{-2r'}ds'dr'
\eqs
\bqs
=\int_{\R}\int_{\R}f(d_2(n_{s-s'}a_{r}.O,a_{r'}.O))e^{-2r'}ds'dr'
\eqs
With the change of variable $s-s'=-xe^{2r}$, this becomes:
\bqs
\Xi(z)=\int_{\R}\int_{\R}f(d_2(n_{-xe^{2r}}a_{r}.O,a_{r'}.O))e^{-2(r'-r)}dxdr'
\eqs
The relation $n_{-xe^{2r}}a_{r}.O=a_{r}n_{-x}.O$ (proved e.g. in \cite{helgason:00}) yields:
\bqs
\Xi(z)=\int_{\R}\int_{\R}f(d_2(a_{r}n_{-x}.O,a_{r'}.O))e^{-2(r-r)'}dxdr'
\eqs
\bqs
=\int_{\R}\int_{\R}f(d_2(O,n_{x}a_{r'-r}.O))e^{-2(r'-r)}dxdr'
\eqs
\bqs
=\int_{\R}\int_{\R}f(d_2(O,n_{x}a_{y}.O))e^{-2y}dxdy
\eqs
\bqs
=\int_{\D}f(d_2(O,z'))\text{dm}(z')
\eqs
with $z'=z_1'+i z_2'$ and $\text{dm}(z')=\dfrac{dz_1'dz_2'}{(1-|z'|^2)^2}$, which shows that $\Xi(z)$ does not depend upon the variable $z$, as announced.  
\end{proof}

\paragraph{Mexican hat connectivity} In this paragraph, we push further the computation of $\overline{W}$ in the special case where $W$ does not depend upon the time variable $t$ and takes the special form suggested by Amari in \cite{amari:77}, commonly referred to as the “Mexican hat” connectivity. It features center excitation and surround inhibition which is an effective model for a mixed population of interacting inhibitory and excitatory neurons with typical cortical connections. It is also only a function of $d_0(\T,\T')$.\\
In detail, we have:
\bqs
W(z,\Delta,z'\Delta')=w\bigg(\sqrt{2(\log \Delta-\log \Delta')^2+d_2^{2}(z,z')}\bigg)
\eqs
where:
\bqs
w(x)=\dfrac{1}{\sqrt{2\pi\sigma_1^2}}e^{-\dfrac{x^2}{\sigma_1^2}}-
\dfrac{A}{\sqrt{2\pi\sigma_2^2}}e^{-\dfrac{x^2}{\sigma_2^2}}
\eqs
with $0\leq \sigma_1\leq\sigma_2$ and $0\leq A\leq 1$.\\
In this case we can obtain a very simple closed-form formula for $\overline{W}$ as shown in the following lemma.
\begin{lem}\label{lemma:mexican}
 When $W$ is a mexican hat function of $d_2(\T,\T')$ and independent of $t$, then:
\bq
\overline{W}=\frac{\pi^{\frac{3}{2}}}{2}\left(\sigma_1e^{2\sigma_1^2}\textbf{erf}\left(\sqrt{2}\sigma_1\right)-A\sigma_2e^{2\sigma_2^2}\textbf{erf}\left(\sqrt{2}\sigma_2\right) \right)
\label{eq:mexhat}
\eq
where $\textbf{erf}$ is the error function defined as:
\bqs
\textbf{erf}(x)=\frac{2}{\sqrt{\pi}}\int_0^x e^{-u^2}du
\eqs
\end{lem}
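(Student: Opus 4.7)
By lemma \ref{lem:invariance}, $\overline W$ is independent of the basepoint, so I would evaluate it at the distinguished point $(z,\Delta)=(0,1)$. After the change of variable $s=\log\Delta'$, which turns $d\Delta'/\Delta'$ into $ds$, the integral becomes
\bqs
\overline W=\int_{-\infty}^{+\infty}\!\!\int_{\D} w\!\left(\sqrt{2s^{2}+d_{2}^{2}(0,z')}\right)\frac{dz_1'\,dz_2'}{(1-|z'|^2)^{2}}\,ds.
\eqs
The crucial point is that, because the Mexican hat is a linear combination of centered Gaussians in the \emph{squared} argument, the integrand factors as a product of a Gaussian in $s$ and a Gaussian in $\rho:=d_2(0,z')$. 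So I would split the double integral into two one-dimensional integrals and treat each Gaussian of $w$ separately, then sum.

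For the $s$-integral I would simply use $\int_{\R} e^{-2s^{2}/\sigma^{2}}ds=\sigma\sqrt{\pi/2}$, which produces a factor that combines with the Gaussian normalization $(2\pi\sigma^{2})^{-1/2}$ to give a clean constant times $e^{-\rho^{2}/\sigma^{2}}$. For the disk integral I would pass to polar coordinates centred at the origin, $z'=re^{i\theta}$ with $r=\tanh\rho$, $dr=\mathrm{sech}^{2}\rho\,d\rho$ and $1-r^{2}=\mathrm{sech}^{2}\rho$, so that the area element
\bqs
\frac{r\,dr\,d\theta}{(1-r^{2})^{2}}=\sinh\rho\cosh\rho\,d\rho\,d\theta=\tfrac{1}{2}\sinh(2\rho)\,d\rho\,d\theta,
\eqs
and the angular integration just yields $2\pi$. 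This reduces the disk integral to $\pi\int_{0}^{\infty}\sinh(2\rho)\,e^{-\rho^{2}/\sigma^{2}}d\rho$.

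The remaining radial integral is the one real computation. I would write $\sinh(2\rho)=\tfrac{1}{2}(e^{2\rho}-e^{-2\rho})$ and complete the square in the exponents, $\pm 2\rho-\rho^{2}/\sigma^{2}=\sigma^{2}-(\rho\mp\sigma^{2})^{2}/\sigma^{2}$. The substitution $u=(\rho\mp\sigma^{2})/\sigma$ turns each of the two resulting integrals into a translated Gauss integral, giving $e^{\sigma^{2}}\sigma\tfrac{\sqrt\pi}{2}(1\pm\textbf{erf}(\sigma))$; subtracting these cancels the $1$'s and produces $\sigma\sqrt{\pi}\,e^{\sigma^{2}}\textbf{erf}(\sigma)$. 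Multiplying by the prefactors collected above yields a contribution proportional to $\sigma\,e^{c\sigma^{2}}\textbf{erf}(c'\sigma)$, with explicit constants, for each Gaussian; the final formula \eqref{eq:mexhat} follows by applying this to $\sigma_1$ and to $-A$ times the analogous term for $\sigma_2$.

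The only real obstacle is the radial Gaussian-$\times$-$\sinh(2\rho)$ integral; everything else is book-keeping. The coupling between the $\Delta$-variable and the $z$-variable in the argument $\sqrt{2s^{2}+d_2^{2}(z,z')}$ might look like it forbids separation of variables, but because the Mexican hat depends only on the square of this argument and $e^{-(2s^2+\rho^2)/\sigma^2}=e^{-2s^2/\sigma^2}e^{-\rho^2/\sigma^2}$, the integrals decouple exactly, which is what makes the closed form possible.
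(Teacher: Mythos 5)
Your proposal follows essentially the same route as the paper's proof (appendix C): reduce to the origin by the invariance lemma, separate the $\log\Delta'$-Gaussian from the $d_2$-Gaussian, pass to hyperbolic polar coordinates so the measure becomes $\tfrac{1}{2}\sinh(2\rho)\,d\rho\,d\theta$, and evaluate the radial integral by writing $\sinh(2\rho)$ as a difference of exponentials and completing the square to produce error functions. The only caveat is that you leave the final constants implicit: if you pin them down you will find that the literal definition of $w$ in the paper (with $e^{-x^2/\sigma_i^2}$) yields $\sigma_i e^{\sigma_i^2}\textbf{erf}(\sigma_i)$-type terms, whereas the stated formula $\sigma_i e^{2\sigma_i^2}\textbf{erf}(\sqrt{2}\sigma_i)$ corresponds to the normalization $e^{-x^2/(2\sigma_i^2)}$ that the paper's appendix actually integrates, so carrying the bookkeeping to the end is worth doing here.
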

\begin{proof}
The proof is given in appendix \ref{appendix:mexican}.
\end{proof}

\subsection{Semi-homogeneous solutions}\label{subsection:semihom}

A semi-homogeneous solution of (\ref{eq:neurco}) is defined as a solution which does not depend upon the variable $\Delta$. In other words, the populations of neurons are not sensitive to the determinant of the structure tensor. The neural mass equation is then equivalent to the neural mass equation for tensors of unit determinant. We point out that semi-homogeneous solutions were previously introduced by Chossat and Faugeras in \cite{chossat-faugeras:09}. They also performed a bifurcation analysis of what they called H-planforms. In this section, we define the framework in which their equations make sense.
\bq
\left\{
\begin{array}{lcl}
\partial_t V(z,t)&=&-\alpha V(z,t)+\int_{\D} W^{sh}(z,z',t)S(V(z',t))\text{dm}(z')+I(z,t) \quad t > 0\\
V(z,0) &=& V_0(z)
\end{array}
\right.
\label{eq:semihom}
\eq

where
\bqs
W^{sh}(z,z',t)=\int_{0}^{+\infty}W(z,\Delta,z',\Delta',t)\dfrac{d\Delta'}{\Delta'} 
\eqs
We have implicitly made the assumption, that $W^{sh}$ does not depend on the coordinate $\Delta$. Some conditions under which this assumption is satisfied are described below.

We now deal with the problem of the existence and uniqueness of a solution to (\ref{eq:semihom}) for a given initial condition. We  first introduce the framework in which this equation makes sense.

\subsubsection{The well-posedness of equation (\ref{eq:semihom})}

Let $J$ be an open interval of $\R$. We assume that:
\begin{itemize}
 \item (\textbf{C1}): $\forall(z,z',t)\in\D^2\times J$, $W^{sh}(z,z',t)\equiv w^{sh}(d_2(z,z'),t)$,
 \item (\textbf{C2}): $\mbW^{sh}\in\mC \left(J,L^1(\D)\right)$ where $\mbW^{sh}$ is defined as $\mbW^{sh}(z,t)=w^{sh}(d_2(z,0),t)$ for all\\ $(z,t)\in\D\times J$, 
 \item (\textbf{C3}): $\sup_{t\in J}\| \mbW^{sh}(t)\|_{L^1}<+\infty$ where $\|\mbW^{sh}(t)\|_{L^1} \overset{def}{=}\int_{\D} |W^{sh}(d_2(z,0),t)|\text{dm}(z)$.
\end{itemize}
Note that conditions (\textbf{C1})-(\textbf{C2}) and lemma \ref{lem:invariance} imply that for all $z\in\D$, $\int_{\D}\left|W^{sh}(z,z',t)\right|\text{dm}(z')=\|\mbW^{sh}(t)\|_{L^1}$. And then, for all $z\in\D$, the mapping $z'\rightarrow W^{sh}(z,z',t)$ is integrable on $\D$. We deduce that for all $(z,t)\in \D\times J$:
\bqs
\left|\int_{\D} W^{sh}(z,z',t)S(V(z',t))\text{dm}(z')\right|\leq \int_{\D} \left|W^{sh}(z,z',t)\right|\left|S(V(z',t))\right|\text{dm}(z')
\eqs
\bqs
\leq S^m \int_{\D} \left|W^{sh}(z,z',t)\right|\text{dm}(z')\leq S^m \sup_{t\in J}\|\mbW^{sh}(t)\|_{L^1}<+\infty
\eqs
The last inequality is a consequence of lemma \ref{lem:invariance} which shows that $\int_{\D} \left|W^{sh}(z,z',t)\right|\text{dm}(z')$ is not a function of $z$.

Finally for all $z\in\D$ and $t\in J$, the righthand side of equation (\ref{eq:semihom}) is well-defined.\\
We introduce the following mapping, defined on $J \times \mF$, where $\mF$ is a yet to be defined functional space:
\bq
f:(t,\phi) \rightarrow f(t,\phi) \text{ such that } f(t,\phi)(z)=\int_{\D} W^{sh}(z,z',t)S(\phi(z'))\text{dm}(z')
\label{eq:mapsh}
\eq
Our aim is to find a functional space $\mF$ where (\ref{eq:mapsh}) is well-defined and the function $f$ maps $\mF$ to $\mF$ for all $t$s. \\
A natural choice would be to choose $\phi$ as a  $L^p(\D,dm)$-integrable function of the space variable with $1\leq p<+\infty$. Unfortunately, the homogeneous solutions (constant with respect to $z$) do not belong to that space. Moreover, a valid model of neural networks should only produce bounded membrane potentials. That is why we focus our choice on functional spaces of bounded, or essentially bounded, functions.

\paragraph{The well-posedness of equation (\ref{eq:semihom}) in the case $\mF=L^{\infty}(\D)$}

Our first choice is $\mF=L^{\infty}(\D)$. The Fischer-Riesz's theorem ensures that $L^{\infty}(\D)$ is a Banach space for the norm: $\| \psi \|_{\infty}=\inf \{C\geq 0,\text{ }|\psi(z)|\leq C \text{ for almost every } z\in\D \}$. We have the following proposition.

\begin{prop}
 If $W^{sh}$ satisfies conditions (\textbf{C1})-(\textbf{C3}) then $f$ is well defined and is from $J \times L^{\infty}(\D)$ to $L^{\infty}(\D)$.
\end{prop}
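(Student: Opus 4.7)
The plan is a straightforward verification: once we know $S$ is globally bounded and that the kernel $W^{sh}(z,\cdot,t)$ has an $L^1$-norm that is \emph{independent of $z$}, the $L^\infty$-bound falls out of the trivial estimate. So the proof essentially just collects the three conditions (\textbf{C1})--(\textbf{C3}) together with the invariance lemma.

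First, I would fix $t\in J$ and $\phi\in L^\infty(\D)$ and write
\bqs
f(t,\phi)(z)=\int_{\D} w^{sh}(d_2(z,z'),t)\,S(\phi(z'))\,\mathrm{dm}(z'),
\eqs
using (\textbf{C1}) to express $W^{sh}$ as a function of the hyperbolic distance. Since $S$ is a sigmoid bounded by $S^m$ and $\phi$ is essentially bounded, $z'\mapsto S(\phi(z'))$ is measurable and essentially bounded on $\D$. Next, I would invoke (\textbf{C2}) to ensure that $z'\mapsto w^{sh}(d_2(z,z'),t)$ is in $L^1(\D,\mathrm{dm})$, and then apply Lemma \ref{lem:invariance} (with $f=|w^{sh}(\cdot,t)|$) to conclude that
\bqs
\int_{\D}\bigl|w^{sh}(d_2(z,z'),t)\bigr|\,\mathrm{dm}(z')=\|\mbW^{sh}(t)\|_{L^1}
\eqs
for \emph{every} $z\in\D$. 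This is exactly the step that makes the $L^\infty$-theory work: hyperbolic invariance of the kernel turns the pointwise bound into a uniform one.

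Combining these, the pointwise estimate
\bqs
|f(t,\phi)(z)|\le S^m\,\|\mbW^{sh}(t)\|_{L^1}\le S^m\,\sup_{t\in J}\|\mbW^{sh}(t)\|_{L^1}<+\infty
\eqs
holds for every $z\in\D$, where finiteness comes from (\textbf{C3}). Hence $\|f(t,\phi)\|_\infty<+\infty$.

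The only non-cosmetic issue is measurability of $z\mapsto f(t,\phi)(z)$, which I would handle by noting that $(z,z')\mapsto w^{sh}(d_2(z,z'),t)\,S(\phi(z'))$ is measurable on $\D\times\D$ (as the composition of the continuous map $d_2$ with a measurable function in its first argument by (\textbf{C2}), times a measurable bounded function of $z'$) and applying Fubini--Tonelli together with the uniform integrability just obtained. That then yields $f(t,\phi)\in L^\infty(\D)$, proving the claim. I do not anticipate any genuine obstacle; the whole proposition is really the observation that (\textbf{C1})+Lemma \ref{lem:invariance} promote (\textbf{C3}) from an ``integrated'' bound to a pointwise uniform one.
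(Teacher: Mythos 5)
Your proof is correct and follows essentially the same route as the paper: the bound $|f(t,\phi)(z)|\le S^m\sup_{t\in J}\|\mbW^{sh}(t)\|_{L^1}$ obtained from the boundedness of $S$ together with the $z$-independence of $\int_{\D}|W^{sh}(z,z',t)|\,\mathrm{dm}(z')$ supplied by (\textbf{C1}) and Lemma \ref{lem:invariance}. The only difference is that you spell out the measurability of $z\mapsto f(t,\phi)(z)$, which the paper dismisses as obvious; that is a harmless (indeed welcome) addition.
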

\begin{proof}
 For all $\phi\in\mF$ and $t\in\R$, $f(t,\phi)$ is obviously measurable and we have from (\ref{eq:mapsh}):
\bqs
\| f(t,\phi) \|_{\infty}\leq S^m \sup_{t\in J}\|\mbW^{sh}(t)\|_{L^1}<+\infty
\eqs
and hence $f(t,\phi )\in L^{\infty}(\D)$.
\end{proof}

\paragraph{The well-posedness of equation (\ref{eq:semihom}) in the case $\mF=\mC_b(\D)$}

We now choose $\mF=\mC_b(\D)$ the space of the bounded continuous functions on $\D$. As shown in, e.g. \cite{hewitt-stromberg:65}, this is a Banach space  with respect to the uniform norm: $\| \psi \|_{\mC_b(\D)}=\sup_{z\in\D}|\psi(z)|$. The previous proposition \ref{prop:wellp} still holds:
\begin{prop}
 If $W^{sh}$ satisfies conditions (\textbf{C1})-(\textbf{C3}) then $f$ is well defined and is from $J\times \mC_b(\D)$ to $\mC_b(\D)$.
\end{prop}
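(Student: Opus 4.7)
The plan is to reuse the argument of the preceding proposition for the sup-norm bound on $f(t,\phi)$ and then upgrade to $\mathcal{C}_b(\D)$ by checking continuity of $z \mapsto f(t,\phi)(z)$. Boundedness is immediate: for any $\phi\in\mathcal{C}_b(\D)$ and any $t\in J$,
$$\bigl|f(t,\phi)(z)\bigr|\leq S^m \int_{\D}\bigl|W^{sh}(z,z',t)\bigr|\,\text{dm}(z') = S^m\|\mbW^{sh}(t)\|_{L^1}\leq S^m\sup_{t\in J}\|\mbW^{sh}(t)\|_{L^1}<+\infty,$$
exactly as in the $L^\infty$ case, where the equality uses lemma~\ref{lem:invariance} applied to $|w^{sh}(\cdot,t)|$.

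For the continuity in $z$ the key idea is to exploit the invariance of $W^{sh}$ under the isometry group of $\D$ rather than any regularity of $w^{sh}$. Fix $z\in\D$ and a sequence $z_n\to z$. Because the isometry group of $\D$ acts transitively, I pick for each $n$ an isometry $g_n$ of $\D$ with $g_n(z_n)=z$, taking e.g.\ the hyperbolic translation of length $d_2(z_n,z)$ along the geodesic from $z_n$ to $z$, so that $g_n\to\mathrm{id}$ and $g_n^{-1}\to\mathrm{id}$ pointwise on $\D$. By (\textbf{C1}) and isometry invariance of $d_2$, $W^{sh}(z_n,z',t)=w^{sh}(d_2(z,g_n z'),t)=W^{sh}(z,g_n z',t)$; the change of variable $u=g_n z'$, which leaves $\text{dm}$ invariant, then gives
$$f(t,\phi)(z_n)-f(t,\phi)(z)=\int_\D W^{sh}(z,u,t)\bigl[S(\phi(g_n^{-1}u))-S(\phi(u))\bigr]\,\text{dm}(u).$$
I conclude by dominated convergence: the integrand goes to $0$ pointwise in $u$ by continuity of $\phi$ and $S$ together with $g_n^{-1}u\to u$, and it is dominated by $2S^m|W^{sh}(z,u,t)|$ which is integrable in $u$ by (\textbf{C3}) and lemma~\ref{lem:invariance}. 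Hence $f(t,\phi)(z_n)\to f(t,\phi)(z)$, so $f(t,\phi)\in\mathcal{C}_b(\D)$.

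The main obstacle is this continuity step: hypotheses (\textbf{C1})--(\textbf{C3}) impose no pointwise regularity of $w^{sh}(\cdot,t)$ in its first argument, so dominated convergence applied directly in the variable $z'$ as $z_n\to z$ has no chance of working. The isometric change of variable converts a perturbation of $z$ into a perturbation of the argument of $\phi$, where continuity \emph{is} available by hypothesis, and this is what makes the argument go through without any extra assumption on $W^{sh}$.
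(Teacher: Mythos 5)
Your proof is correct and rests on exactly the same mechanism as the paper's: the sup-norm bound is identical, and the continuity step in both cases uses the invariance from (\textbf{C1}) to transfer the perturbation of the evaluation point $z$ out of the kernel and into the argument of $\phi$, where continuity is available, before concluding by dominated convergence with the integrable majorant $S^m\left|\mbW^{sh}(\cdot,t)\right|$. The only difference is presentational: the paper carries this out in horocyclic coordinates $z=n_sa_r\cdot O$ via an explicit change of variables and a parametric-continuity theorem in $(s,r)$, whereas you work coordinate-free with a sequence of isometries $g_n\to\mathrm{id}$.
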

\begin{proof}
 For all $\phi \in \mC_b(\D)$ and $t\in\R$:
\bqs
\| f(t,\phi) \|_{\mC_b(\D)}\leq S^m \sup_{t\in J}\|\mbW^{sh}(t)\|_{L^1}<+\infty
\eqs
so $f(t,\phi)$ is bounded. It remains to show that for all $(t,z,\phi)\in J\times\D\times\mC_b(\D)$ the mapping $z\rightarrow f(t,\phi)(z)$ is continuous on $\D$. We fix $t\in J$ and write $z$ in horocyclic coordinates $z=n_{s}a_{r}.O$ with $(s,r)\in\R^2$ and define $\tilde{f}$ on $\R^2$ as:
\bqs
\tilde{f}:(s,r)\rightarrow \int_{\R}\int_{\R}W^{sh}\big(d_2(n_{s}a_{r}.O,n_{s'}a_{r'}.O),t\big)S(\phi(n_{s'}a_{r'}.O))e^{-2r'}ds'dr'
\eqs
A method similar to the one used in section \ref{subsubsection:simpl} leads to:
\bqs
\tilde{f}(s,r)=\int_{\R}\int_{\R}W^{sh}\big(d_2(O,n_{x}a_{y}.O),t\big)S(\phi(n_{s+xe^{2r}}a_{r+y}.O))e^{-2y}dxdy
\eqs
We now use a theorem on integrals depending on a parameter. It is easy to verify that
\begin{enumerate}
 \item for all $(s,r)\in\R^2$ the function $(x,y)\rightarrow W^{sh}\big(d_2(O,n_{x}a_{y}.O),t\big)S(\phi(n_{s+xe^{2r}}a_{r+y}.O))e^{-2y}$ is measurable on $\R^2$,
 \item for almost every $(x,y)\in\R^2$ the function $(s,r)\rightarrow W^{sh}\big(d_2(O,n_{x}a_{y}.O),t\big)S(\phi(n_{s+xe^{2r}}a_{r+y}.O))e^{-2y}$ is continuous on $\R^2$,
 \item for all $(s,r)\in\R^2$, 
\bqs
|W^{sh}\big(d_2(O,n_{x}a_{y}.O),t\big)S(\phi(n_{s+xe^{2r}}a_{r+y}.O))e^{-2y}|\leq |W^{sh}\big(d_2(O,n_{x}a_{y}.O),t\big)|S^m
\eqs
and $(x,y)\rightarrow|W^{sh}\big(d_2(O,n_{x}a_{y}.O),t\big)|$ is integrable on $\R^2$.
\end{enumerate}
It follows that the function $\tilde{f}$ is continuous on $\R^2$ and $f$ is continuous on $\D$.\\
Finally, $f(t,\phi )$ belongs to $\mC_b(\D)$.
\end{proof}

\subsubsection{The existence and uniqueness of a solution of (\ref{eq:semihom})}
From now on, $\mF$ is a functional Banach space for the norm $\| \cdot \|_{\mF}$. We suppose that all the hypotheses are verified so that $f$ is well-defined from $J\times\mF$ to $\mF$ with $J$ an open interval containing $0$. In the previous section we have already presented two different examples for $\mF$: $L^{\infty}(\D)$ and $\mC_b(\D)$.\\
We rewrite (\ref{eq:semihom}) as a Cauchy problem defined on $\mF$:

\bq \left\{
    \begin{array}{lcl}
V'(t)&=&-\alpha V(t)+f(t,V(t))+I(t)\quad t\in J\\
V(0)&=&V_0
\end{array}
\right.
\label{eq:shcauchy}
\eq
\begin{thm}\label{thm:exun}
 If the external current $I$ belongs to $\mC(J,\mF)$ with $J$ an open interval containing $0$ and $W^{sh}$ satisfies conditions (\textbf{C1})-(\textbf{C3}), then for all $V_0\in\mF$, there exists a unique solution of (\ref{eq:shcauchy}) defined on a subinterval $J_0$ of $J$ containing $0$.
\end{thm}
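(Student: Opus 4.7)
The plan is to reformulate \eqref{eq:shcauchy} as a fixed-point problem for the Volterra integral operator
\bqs
\mathcal{K}(V)(t) = e^{-\alpha t}V_0 + \int_0^t e^{-\alpha(t-s)}\bigl(f(s,V(s)) + I(s)\bigr)\,ds,
\eqs
acting on $\mC([-\eta,\eta],\mF)$ for $\eta>0$ small, and then apply the Banach fixed-point theorem. Equivalently, one may invoke the abstract Cauchy-Lipschitz (Picard-Lindelöf) theorem for ODEs valued in a Banach space, whose hypotheses are: continuity of $g(t,V) := -\alpha V + f(t,V) + I(t)$ in $t$ and (local) Lipschitz continuity in $V$, uniformly on compact subintervals.

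The first step is the Lipschitz estimate for $f$. For $\phi_1,\phi_2\in\mF$ and $t\in J$, using that $S$ is globally Lipschitz with constant $S'_m = \sup_{x\in\R}|S'(x)|$ and lemma \ref{lem:invariance} (which gives $\int_\D |W^{sh}(z,z',t)|\,\text{dm}(z') = \|\mbW^{sh}(t)\|_{L^1}$ independently of $z$), one obtains
\bqs
|f(t,\phi_1)(z)-f(t,\phi_2)(z)| \leq S'_m \int_\D |W^{sh}(z,z',t)|\,|\phi_1(z')-\phi_2(z')|\,\text{dm}(z') \leq S'_m\,\|\mbW^{sh}(t)\|_{L^1}\,\|\phi_1-\phi_2\|_\mF,
\eqs
and this pointwise bound passes to the $L^\infty$ or $\mC_b$ norm. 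By condition (\textbf{C3}), $\|\mbW^{sh}(t)\|_{L^1}$ is uniformly bounded on $J$, so $f$ is in fact globally Lipschitz in its second argument with constant $L := S'_m \sup_{t\in J}\|\mbW^{sh}(t)\|_{L^1}$. The second step is continuity in $t$: writing $f(t,\phi)-f(t_0,\phi) = \int_\D [W^{sh}(z,z',t)-W^{sh}(z,z',t_0)]S(\phi(z'))\,\text{dm}(z')$ and using that $\mbW^{sh}\in\mC(J,L^1(\D))$ from (\textbf{C2}) together with $|S|\leq S^m$ yields $\|f(t,\phi)-f(t_0,\phi)\|_\mF \leq S^m\|\mbW^{sh}(t)-\mbW^{sh}(t_0)\|_{L^1}\to 0$; combined with the Lipschitz bound in $\phi$, this gives joint continuity of $f$ on $J\times\mF$, and hence of $g$ since $I\in\mC(J,\mF)$.

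The third step is the contraction argument: on the closed subset of $\mC([-\eta,\eta],\mF)$ (with sup norm) consisting of continuous curves starting at $V_0$, a direct computation shows
\bqs
\|\mK(V_1)-\mK(V_2)\|_\infty \leq \eta(L)\,\|V_1-V_2\|_\infty,
\eqs
so for $\eta$ small enough $\mK$ is a strict contraction and admits a unique fixed point, yielding a unique local solution. Standard maximal-solution arguments (analogous to those used in the proof of the homogeneous case) then give a unique solution on a maximal open subinterval $J_0 \subset J$ containing $0$.

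The only nontrivial point is justifying that the Lipschitz and continuity estimates for $f$ transfer correctly from the pointwise (in $z$) bound to the norm on $\mF$; for $\mF = L^\infty(\D)$ this is immediate via the essential supremum, and for $\mF = \mC_b(\D)$ the continuity in $z$ of $\mK(V)(t)$ at each time follows from the same dominated-convergence argument already used in the proof of the well-posedness proposition for $\mC_b(\D)$, so that the fixed-point iteration remains in $\mC([-\eta,\eta],\mC_b(\D))$.
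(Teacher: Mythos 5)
Your proposal is correct and follows essentially the same route as the paper: it establishes the same Lipschitz estimate $\| f(t,\phi_1)-f(t,\phi_2)\|_{\mF}\leq S'_m\sup_{t\in J}\|\mbW^{sh}(t)\|_{L^1}\|\phi_1-\phi_2\|_{\mF}$ and the same continuity-in-$t$ bound via (\textbf{C2}), and then applies the Cauchy--Lipschitz theorem in a Banach space (your explicit Picard iteration is just the standard proof of that theorem, which the paper invokes directly).
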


\begin{proof}
 We prove that $f$ is continuous on $J\times\mF$. We have
\begin{multline*}
f(t,x)-f(s,y)=\int_{\D} W^{sh}(\cdot,z',t)\big(S(x)-S(y)\big)\text{dm}(z')\\
+\int_{\D}\bigg( W^{sh}(\cdot,z',t)-W^{sh}(\cdot,z',s)\bigg)S(y)\text{dm}(z'),
\end{multline*}
and therefore
\bqs
\| f(t,x)-f(s,y) \|_{\mF}\leq  S'_m \mW_0\|x-y\|_{\mF}
+S^{m}\left\| \mbW^{sh}(t)-\mbW^{sh}(s)\right\|_{L^1}
\eqs
Because of condition (\textbf{C2}) we can choose $|t-s|$ small enough so that\\ $\left\| \mbW^{sh}(t)-\mbW^{sh}(s)\right\|_{L^1}$ is arbitrarily small. This proves the continuity of $f$. Moreover it follows from the previous inequality that:
\bqs
\| f(t,x)-f(t,y) \|_{\mF}\leq  S'_m \mW_0\|x-y\|_{\mF} 
\eqs
with $\mW_0=\sup_{t\in J}\|\mbW^{sh}(t)\|_{L^1}$. This ensures the Lipschitz continuity of $f$ with respect to its second argument, uniformly with respect to the first. The Cauchy-Lipschitz theorem on a Banach space yields the conclusion. 
\end{proof}
This solution, defined on the subinterval $J$ of $\R$ can in fact be extended to the whole real line, and we have the following proposition. 

\begin{prop}\label{prop:exunext}
 If the external current $I$ belongs to $\mC(\R,\mF)$ and $W^{sh}$ satisfies conditions (\textbf{C1})-(\textbf{C3}) with $J=\R$, then for all $V_0\in\mF$, there exists a unique solution of (\ref{eq:shcauchy}) defined on $\R$.
\end{prop}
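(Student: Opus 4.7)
The plan is to mimic the extension argument already used in the homogeneous case (Proposition after Theorem~1): apply the local result (Theorem~\ref{thm:exun}) to obtain a maximal solution, then show by an a~priori bound that its maximal interval of existence cannot have a finite endpoint. Concretely, let $V$ be the maximal solution on an interval $J_0\ni 0$, let $\beta=\sup J_0$, and assume for contradiction that $\beta<+\infty$. We then rewrite \eqref{eq:shcauchy} in mild form via the variation-of-constants formula
\bqs
V(t)=e^{-\alpha t}V_0+\int_0^t e^{-\alpha(t-u)}f(u,V(u))\,du+\int_0^t e^{-\alpha(t-u)}I(u)\,du,\qquad t\in[0,\beta),
\eqs
and take $\mF$-norms.

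The key uniform bound comes from condition (\textbf{C3}) with $J=\R$: from the inequality established in the proof of Theorem~\ref{thm:exun} (specifically $\|f(t,\phi)\|_\mF\le S^m\mW_0$, where $\mW_0=\sup_{t\in\R}\|\mbW^{sh}(t)\|_{L^1}<+\infty$) the nonlinear term has norm bounded by a constant independent of $t$. Since $I$ is continuous on the compact interval $[0,\beta]$, it is bounded there, say by $M_I$. Combining the two contributions yields
\bqs
\|V(t)\|_\mF\le \|V_0\|_\mF+\frac{S^m\mW_0+M_I}{\alpha}\bigl(1-e^{-\alpha t}\bigr)\le \|V_0\|_\mF+\frac{S^m\mW_0+M_I}{\alpha}\qquad \forall t\in[0,\beta),
\eqs
so $V$ remains in a bounded subset of $\mF$ up to the supposed blow-up time. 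Invoking the standard maximal-solution criterion (theorem \ref{thm:bouts}, as in the homogeneous argument) contradicts $\beta<+\infty$, so $\beta=+\infty$. The same argument applied backwards in time yields $\inf J_0=-\infty$, and the solution is defined on all of $\R$.

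I don't expect a real obstacle here: the Lipschitz estimate and the uniform $L^1$ control of $\mbW^{sh}$ that were the content of Theorem~\ref{thm:exun} do all the work, and continuity of $I$ on $\R$ takes care of the forcing term on any compact subinterval. The only point to be careful about is that the a~priori bound must be uniform on $[0,\beta)$, which is why we exploit $\mW_0<+\infty$ (from (\textbf{C3}) with $J=\R$) rather than a merely local bound, and compactness of $[0,\beta]$ for $I$.
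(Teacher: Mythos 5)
Your route is genuinely different from the paper's. The paper disposes of this proposition in one line: since $\|f(t,x)-f(t,y)\|_{\mF}\leq S'_m\mW_0\|x-y\|_{\mF}$ holds for all $t\in\R$ with $\mW_0=\sup_{t\in\R}\|\mbW^{sh}(t)\|_{L^1}<+\infty$ by (\textbf{C3}), the map $f$ (and hence the full right-hand side of \eqref{eq:shcauchy}) is \emph{globally} Lipschitz in its second argument uniformly in $t$, and theorem \ref{thm:appendixglobal} then gives existence and uniqueness on all of $\R$ directly, with no maximal-solution or a~priori-bound argument needed. You instead transplant the blow-up argument used for the homogeneous case; your variation-of-constants estimate and the uniform bound $\|f(t,\phi)\|_{\mF}\leq S^m\mW_0$ are both correct.

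There is, however, one step that does not survive the passage from $\R$ to the Banach space $\mF$: the appeal to theorem \ref{thm:bouts}. That theorem's alternative is that a maximal solution eventually leaves every \emph{compact} subset of $\mO$, so to derive a contradiction from it you would need $V([0,\beta))$ to be contained in a compact set. In the homogeneous case $\mF=\R$ and bounded closed sets are compact, so the bound on $|V(t)|$ suffices; but in $L^{\infty}(\D)$ or $\mC_b(\D)$ a norm-bounded set is not relatively compact, so your estimate $\|V(t)\|_{\mF}\leq \|V_0\|_{\mF}+\alpha^{-1}(S^m\mW_0+M_I)$ does not by itself contradict theorem \ref{thm:bouts}. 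The gap is reparable within your strategy: from the same bounds, $\|V'(t)\|_{\mF}=\|-\alpha V(t)+f(t,V(t))+I(t)\|_{\mF}$ is bounded on $[0,\beta)$, so $V$ is Lipschitz there, hence has a limit in $\mF$ as $t\to\beta^-$; restarting the local existence theorem at $(\beta,V(\beta^-))$ then contradicts maximality. But the cleaner course is the paper's: the global Lipschitz estimate you already quote makes theorem \ref{thm:appendixglobal} applicable at once.
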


\begin{proof}
 The proof is a direct application of theorem \ref{thm:appendixglobal}. If $f$ is Lipschitz continuous with respect to its second argument, the righthand side of \eqref{eq:shcauchy} is also  Lipschitz continuous. Now $f$ is Lipschitz continuous because
\bqs
\| f(t,x)-f(t,y) \|_{\mF}\leq  S'_m \mW_0\|x-y\|_{\mF} \quad \forall t\in \R
\eqs
\end{proof}

\subsubsection{The boundedness of a solution of (\ref{eq:semihom})}\label{subsubsection:intr}
We assume that $\mF$ is a Banach space chosen so that the mapping $f$ is well defined from $\R\times\mF$ to $\mF$. Then the following propostion holds.

\begin{prop}
 If the external current $I$ belongs to $\mC(\R^+,\mF)$ and is bounded in time, i.e. $\sup_{t\in\R^+}\| I(t)\|_{\mF}<+\infty$, and $W^{sh}$ satisfies conditions (\textbf{C1})-(\textbf{C3}) with $J=\R^{+*}$, then the solution of (\ref{eq:shcauchy}) is bounded for each initial condition $V_0\in\mF$.
\end{prop}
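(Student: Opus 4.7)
The plan is to obtain an explicit a priori bound on $\|V(t)\|_{\mF}$ by rewriting the Cauchy problem (\ref{eq:shcauchy}) in its integral (mild) form and then invoking the uniform bounds on $f$ and $I$. Since proposition \ref{prop:exunext} already yields a unique global solution $V \in \mC(\R^+,\mF)$, the variation of constants formula gives, for all $t \geq 0$,
\begin{equation*}
V(t) = e^{-\alpha t} V_0 + \int_0^t e^{-\alpha(t-s)}\bigl[f(s,V(s)) + I(s)\bigr]\,ds.
\end{equation*}

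Next I would estimate the integrand uniformly in $s$. From the computation already used in the proof of theorem \ref{thm:exun} (applied with $y=0$), one has $\|f(s,V(s))\|_{\mF} \leq S^{m}\,\mW_0$ with $\mW_0 = \sup_{t\in\R^{+}}\|\mbW^{sh}(t)\|_{L^1} < +\infty$ by condition (\textbf{C3}); equivalently, this follows directly from the bound $|S|\leq S^m$ and lemma \ref{lem:invariance}. By hypothesis, $I_\infty := \sup_{t\in\R^{+}}\|I(t)\|_{\mF} < +\infty$. Taking norms, using the triangle inequality under the integral, and integrating the exponential kernel then yields
\begin{equation*}
\|V(t)\|_{\mF} \;\leq\; e^{-\alpha t}\|V_0\|_{\mF} + \frac{1-e^{-\alpha t}}{\alpha}\bigl(S^{m}\mW_0 + I_\infty\bigr) \;\leq\; \|V_0\|_{\mF} + \frac{S^{m}\mW_0 + I_\infty}{\alpha},
\end{equation*}
which is uniform in $t \in \R^{+}$ and gives the claimed boundedness.

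There is essentially no obstacle here: the key ingredient is the uniform-in-$(t,V)$ bound on $f(t,V(t))$, which is built into the sigmoidal nature of $S$ combined with the $L^1$-integrability assumption (\textbf{C3}). The only care required is to write the argument intrinsically in the Banach space $\mF$ (so that $\|\cdot\|_{\mF}$ passes under the Bochner integral), but this is standard for continuous $\mF$-valued maps. As a byproduct the estimate also shows that $\limsup_{t\to+\infty}\|V(t)\|_{\mF} \leq (S^m\mW_0+I_\infty)/\alpha$, independently of the initial condition.
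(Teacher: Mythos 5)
Your proof is correct and follows essentially the same route as the paper: the authors likewise pass to the variation-of-constants (integral) form of \eqref{eq:shcauchy}, bound the nonlinear term by $S^m\mW_0$ and the input by $\sup_{t\in\R^+}\|I(t)\|_{\mF}$, and integrate the exponential kernel to obtain exactly the estimate \eqref{eq:majoration}. Your closing remark on the $\limsup$ is consistent with the paper's subsequent proposition identifying the attracting ball $B_\rho$.
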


\begin{proof}
For all $t\in\R^{+*}$ we integrate (\ref{eq:semihom}) over $[0,t]$:
\bqs
V(z,t)=e^{-\alpha t} V_0(z)+\int_0^t e^{-\alpha(t-u)}\int_{\D} W^{sh}(z,z',u)S(V(z',u))\text{dm}(z')du+\int_0^t e^{-\alpha(t-u)}I(z,u)du
\eqs
The following upperbound holds
\bq
\|V(t)\|_{\mF}\leq e^{-\alpha t} \|V_0\|_{\mF}+\frac{1}{\alpha}\left(S^m \mW_0+\sup_{t\in\R^+}\| I(t)\|_{\mF}\right)\left(1-e^{-\alpha t}\right),
\label{eq:majoration}
\eq
and hence
\bqs
\|V(t)\|_{\mF}\leq \|V_0\|_{\mF}+\frac{1}{\alpha}\left(S^m \mW_0+\sup_{t\in\R^+}\| I(t)\|_{\mF}\right),
\eqs
which shows that the solution is bounded for each initial condition $V_0\in\mF$.\\
\end{proof}
The upperbound \eqref{eq:majoration} yields a simple attracting set for the dynamics of (\ref{eq:semihom}) as shown in the following proposition.
\begin{prop}
 Let $\rho\overset{def}{=}\frac{2}{\alpha}\left(S^m \mW_0+\sup_{t\in\R^+}\| I(t)\|_{\mF}\right)$. The open ball $B_{\rho}$ of $\mF$ of center $0$ and radius $\rho$ is stable under the dynamics  of equation (\ref{eq:semihom}). Moreover it is an attracting
set for this dynamics and if $V_0\notin B_{\rho}$ and $T=\inf \{t>0 \text{ such that } V(t)\in B_{\rho}\}$ then:
\bqs
T\leq \frac{1}{\alpha}\log\left(\frac{2\| V_0\|_{\mF}-\rho}{\rho} \right)
\eqs
\end{prop}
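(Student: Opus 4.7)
The plan is to leverage the upper bound \eqref{eq:majoration} from the previous proposition, which after substituting the definition of $\rho$ can be rewritten in the compact form
\begin{equation*}
\|V(t)\|_{\mF}\leq e^{-\alpha t}\|V_0\|_{\mF}+\frac{\rho}{2}\bigl(1-e^{-\alpha t}\bigr)=\frac{\rho}{2}+e^{-\alpha t}\Bigl(\|V_0\|_{\mF}-\frac{\rho}{2}\Bigr).
\end{equation*}
Denote the right-hand side by $g(t)$. Everything reduces to a one-dimensional analysis of this affine-in-$e^{-\alpha t}$ bound: $g$ is continuous and monotone, it starts at $\|V_0\|_{\mF}$ at $t=0$, and it decreases (or stays constant) toward the asymptotic value $\rho/2$ as $t\to+\infty$.

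First, for stability of $B_\rho$, I would take $V_0\in B_\rho$, i.e.\ $\|V_0\|_{\mF}<\rho$, and estimate
\begin{equation*}
\|V(t)\|_{\mF}\leq g(t)<e^{-\alpha t}\rho+\frac{\rho}{2}\bigl(1-e^{-\alpha t}\bigr)=\frac{\rho}{2}\bigl(1+e^{-\alpha t}\bigr)\leq\rho
\end{equation*}
for every $t\geq 0$, which directly gives $V(t)\in B_\rho$. This is the easy part: the strict inequality $\|V_0\|_{\mF}<\rho$ is preserved, so there is no boundary issue to worry about.

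Second, for the attracting property and the explicit estimate on $T$, I would start from $V_0\notin B_\rho$, so $\|V_0\|_{\mF}\geq\rho>\rho/2$, which makes $g$ strictly decreasing from a value $\geq\rho$ toward $\rho/2$. I would then solve $g(t^\star)=\rho$ explicitly:
\begin{equation*}
e^{-\alpha t^\star}\Bigl(\|V_0\|_{\mF}-\frac{\rho}{2}\Bigr)=\frac{\rho}{2}\quad\Longleftrightarrow\quad t^\star=\frac{1}{\alpha}\log\!\left(\frac{2\|V_0\|_{\mF}-\rho}{\rho}\right).
\end{equation*}
For any $t>t^\star$, monotonicity of $g$ gives $\|V(t)\|_{\mF}\leq g(t)<\rho$, hence $V(t)\in B_\rho$. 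Taking the infimum of such $t$ yields $T\leq t^\star$, which is exactly the announced bound.

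I do not expect a real obstacle: once the bound \eqref{eq:majoration} is rewritten in terms of $\rho$, the statement reduces to elementary monotonicity of the exponential. The only mild subtlety is remembering that $B_\rho$ is \emph{open}, so in the stability step one must propagate strict inequality (which $e^{-\alpha t}\rho+\tfrac{\rho}{2}(1-e^{-\alpha t})\leq\rho$ provides with room to spare), and in the attracting step one should argue via $t>t^\star$ rather than $t=t^\star$ before passing to the infimum.
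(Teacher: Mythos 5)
Your proposal is correct and follows essentially the same route as the paper: both rewrite the bound \eqref{eq:majoration} as $\|V(t)\|_{\mF}\leq \frac{\rho}{2}+e^{-\alpha t}\left(\|V_0\|_{\mF}-\frac{\rho}{2}\right)$ and then reduce everything to elementary monotonicity of the exponential, solving for the time at which this bound crosses $\rho$. The only cosmetic difference is that the paper phrases the attracting-set step as a contradiction argument producing a crossing time $t_0$ with $\|V(t_0)\|_{\mF}=\rho$, whereas you work directly with the explicit root $t^\star$ of the upper bound, which is marginally cleaner.
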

\begin{proof}
 We can rewrite \eqref{eq:majoration} as:
 \begin{multline}\label{eq:upperbound}
\| V(t)\|_{\mF} \leq e^{-\alpha t}\left( \|V_0\|_{\mF}-\frac{1}{\alpha}\left(S^m \mW_0+\sup_{t\in\R^+}\| I(t)\|_{\mF}\right)\right) +\frac{1}{\alpha}\left(S^m \mW_0+\sup_{t\in\R^+}\| I(t)\|_{\mF}\right) =\\
e^{-\alpha t}\left( \|V_0\|_{\mF}-\frac{\rho}{2}\right)+\frac{\rho}{2}
\end{multline}
If $V_0 \in B_\rho$ this implies $\| V(t)\|_{\mF} \leq \frac{\rho}{2}\left(1+e^{-\alpha t} \right)$ for all $t > 0$ and hence $\| V(t)\|_{\mF} < \rho$ for all $t>0$, proving that $B_\rho$ is stable.
Now assume that $\| V(t)\|_{\mF} > \rho$ for all $t \geq 0$. The inequality \eqref{eq:upperbound} shows that for $t$ large enough this yields a contradiction. Therefore there exists $t_0 > 0$ such that $\| V(t_0)\|_{\mF}=\rho$. At this time instant
we have
\bqs
\rho\leq e^{-\alpha t_0}\left( \|V_0\|_{\mF}-\frac{\rho}{2}\right) +\frac{\rho}{2},
\eqs
and hence
\bqs
t_0\leq \frac{1}{\alpha}\log\left(\frac{2\| V_0\|_{\mF}-\rho}{\rho} \right)
\eqs
\end{proof}

\newtheorem{cor}{Corollary}[subsection]



\subsection{General solution}\label{subsection:gensol}

We now deal with the general solutions of equation (\ref{eq:neurmass}).  We first give some hypotheses that the connectivity function $W$ must satisfy. We present them in two ways, first on the set of structure tensors considered as the set SPD(2) and, second on the set of tensors seen as $D\times \R_*^+$. Let $J$ be a subinterval of $\R$. We assume that:
\begin{itemize}
 \item (\textbf{H1}): $\forall (\T,\T',t)\in \text{SPD(2)}\times \text{SPD(2)}\times J$, $W(\T,\T',t)\equiv W(d_0(\T,\T'),t)$,
 \item (\textbf{H2}): $\mbW\in\mC\left(J,L^1\left(\text{SPD(2)}\right)\right)$ where $\mbW$ is defined as $\mbW(\T,t)=W\left(d_0(\T,\text{Id}_2),t\right)$ for all $(\T,t)\in \text{SPD(2)}\times J$ where $\text{Id}_2$ is the identity matrix of $\mM_2(\R)$,
 \item (\textbf{H3}): $\forall t\in J$, $\sup_{t\in J}\| \mbW(t)\|_{L^1}<+\infty$ where $\| \mbW(t)\|_{L^1}\overset{def}{=}\int_{\text{SPD(2)}} |W(d_0(\T,\text{Id}_2),t)|d\T$.
\end{itemize}
We now express these hypotheses for the representation in $(z,\Delta)\in\D\times\R_{*}^{+}$ of structure tensors:
\begin{itemize}
 \item (\textbf{H1}bis): $\forall (z,z',\Delta,\Delta',t)\in \D^2\times (\R_{*}^{+})^2\times \R$, $W(z,\Delta,z',\Delta',t)\equiv W(d_2(z,z'),|\log(\Delta)-\log(\Delta')|,t)$,
 \item (\textbf{H2}bis): $\mbW\in\mC\left(J,L^1\left(\D\times \R_{*}^{+}\right)\right)$ where $\mbW$ is defined as $\mbW(z,\Delta,t)=W\left(d_2(z,0),|\log(\Delta)|,t\right)$ for all $\forall(z,\Delta,t)\in\D\times \R_{*}^{+}\times J$,
 \item (\textbf{H3}bis): $\forall t\in J$, $\sup_{t\in J}\|\mbW(t)\|_{L^1}<+\infty$ where 
\bqs 
\|\mbW(t)\|_{L^1}\overset{def}{=}\int_{\D\times \R_{*}^{+}}\left|W\left(d_2(z,0),\left|\log(\Delta)\right|,t\right)\right|\frac{d\Delta}{\Delta}dm(z)
\eqs.
\end{itemize}

\subsubsection{Functional space setting}
We need to settle on the choice of a Banach functional space $\mF$ for the membrane potential as in section \ref{subsection:semihom}.
Our study of the semi-homogeneous case suggests the following choice: $\mF=L^{\infty}(\D\times\R_{*}^{+})$. As $\D\times\R_{*}^{+}$ is an open set of $\R^3$, $\mF$ is a Banach space for the norm: $\| \phi \|_{\mF}=\sup_{z\in\D}\sup_{\Delta\in\R_{*}^{+}}|\phi(z,\Delta)|$.\\
We introduce the following mapping $f^{g}:(t,\phi) \rightarrow f^{g}(t,\phi)$ such that:
\bq
 f^{g}(t,\phi)(z,\Delta)=-\alpha \phi(z,\Delta)+\int_{\D\times\R_{*}^{+}}W\left(d_2(z,z'),\left|\log(\frac{\Delta}{\Delta'})\right|,t\right) S\left(\phi(z',\Delta')\right)\dfrac{d\Delta'}{\Delta'}\text{dm}(z')+I(z,\Delta,t)
\label{eq:map}
\eq
\begin{prop}\label{prop:wellp}
 If $I\in\mC\big(J,\mF\big)$ with $\sup_{t\in J}\| I(t)\|_{\mF}<+\infty$ and $W$ satisfies hypotheses (\textbf{H1}bis)-(\textbf{H3}bis) then $f^{g}$ is well-defined and is from $J\times\mF$ to $\mF$.
\end{prop}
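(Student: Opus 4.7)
The plan is to mimic the argument used in the semi-homogeneous case, relying on the invariance lemma (Lemma 1 of Section 2.3.2) extended to the three-dimensional setting SPD(2) $\cong \D \times \R_*^+$. The target is a pointwise bound on $f^g(t,\phi)(z,\Delta)$ independent of $(z,\Delta)$ so that the supremum norm is finite.

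First I would check measurability: for fixed $(t,\phi)$, the integrand in \eqref{eq:map} is jointly measurable in $(z',\Delta')$ because $W$ is a measurable function of $d_2$, $|\log(\Delta/\Delta')|$ and $t$, $S$ is continuous, and $\phi \in L^\infty(\D \times \R_*^+)$. Next I would estimate the integral. Using hypothesis (\textbf{H1}bis) and the boundedness of $S$,
\begin{equation*}
\left|\int_{\D\times\R_*^+} W\!\left(d_2(z,z'),\Big|\log\tfrac{\Delta}{\Delta'}\Big|,t\right) S(\phi(z',\Delta'))\,\frac{d\Delta'}{\Delta'}\,dm(z')\right|
\leq S^m \int_{\D\times\R_*^+} \left|W\!\left(d_2(z,z'),\Big|\log\tfrac{\Delta}{\Delta'}\Big|,t\right)\right| \frac{d\Delta'}{\Delta'}\,dm(z').
\end{equation*}
The right-hand side is precisely the three-dimensional analogue of the integral $\Xi(z)$ treated in Lemma \ref{lem:invariance}. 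Performing the change of variable $\Delta' \to \Delta'/\Delta$ (which leaves the Haar-type measure $d\Delta'/\Delta'$ invariant) removes the dependence on $\Delta$, and the horocyclic-coordinate argument already established for $\int_\D |W^{sh}(z,z',t)|\,dm(z')$ removes the dependence on $z$. Consequently this integral equals $\|\mbW(t)\|_{L^1}$ as defined in (\textbf{H3}bis).

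Putting everything together, for every $(z,\Delta)\in\D\times\R_*^+$ and every $t\in J$,
\begin{equation*}
|f^g(t,\phi)(z,\Delta)| \leq \alpha\,\|\phi\|_\mF + S^m\,\|\mbW(t)\|_{L^1} + \|I(t)\|_\mF.
\end{equation*}
Taking the essential supremum in $(z,\Delta)$ and then the supremum in $t\in J$, hypothesis (\textbf{H3}bis) together with the assumption $\sup_{t\in J}\|I(t)\|_\mF<+\infty$ yields
\begin{equation*}
\|f^g(t,\phi)\|_\mF \leq \alpha\,\|\phi\|_\mF + S^m \sup_{t\in J}\|\mbW(t)\|_{L^1} + \sup_{t\in J}\|I(t)\|_\mF < +\infty,
\end{equation*}
so $f^g(t,\phi)\in\mF$ for every $(t,\phi)\in J\times\mF$, which is the conclusion.

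The only non-routine step is the verification that the invariance lemma extends to the fibered setting $\D\times\R_*^+$: the $\Delta$-direction is handled by a one-line translation in $\log\Delta$, but the $z$-direction requires reusing the horocyclic change of variables from the proof of Lemma \ref{lem:invariance}. This is really the main technical point; once the invariance is in place, the boundedness of $f^g$ and hence its well-posedness on $L^\infty(\D\times\R_*^+)$ are immediate from the triangle inequality.
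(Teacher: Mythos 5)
Your proof is correct and follows essentially the same route as the paper: the paper's (one-line) proof is exactly the pointwise bound of the integral term by $S^m\sup_{t\in J}\|\mbW(t)\|_{L^1}$, which tacitly uses the invariance you make explicit (the change of variable $\Delta'\to\Delta'/\Delta$ together with the horocyclic argument of lemma \ref{lem:invariance}). Your version simply spells out that invariance step and adds the routine triangle-inequality handling of the $-\alpha\phi$ and $I$ terms, which the paper leaves implicit.
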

\begin{proof}
 $\forall(z,\Delta,t)\in\D\times \R_{*}^{+}\times\R$, we have:
\bqs
\left|\int_{\D\times\R_{*}^{+}}W\left(d_2(z,z'),\left|\log(\frac{\Delta}{\Delta'})\right|,t\right) S(\phi(z',\Delta'))\dfrac{d\Delta'}{\Delta'}\text{dm}(z')\right|\leq S^{m}\sup_{t\in J}\|\mbW(t)\|_{L^1}<+\infty
\eqs
\end{proof}

\subsubsection{The existence and uniqueness of a solution of (\ref{eq:neurco})}
We rewrite (\ref{eq:neurco}) as a Cauchy problem:

\bq \left\{
    \begin{array}{ll}
\partial_t V(z,\Delta,t)=-\alpha V(z,\Delta, t)+\int_{\D\times\R_{*}^{+} }W\left(d_2(z,z'),\left|\log(\frac{\Delta}{\Delta'})\right|,t\right)S(V(z',\Delta',t))\dfrac{d\Delta'}{\Delta'}\text{dm}(z')+I(z,\Delta,t)\\
V(z,\Delta,0)=V_0(z,\Delta)
\end{array}
\right.
\label{eq:gencauchy}
\eq

\begin{thm}
 If the external current $I$ belongs to $\mC(J,\mF)$ with $J$ an open interval containing $0$ and $W$ satisfies hypotheses (\textbf{H1}bis)-(\textbf{H3}bis), then fo all $V_0\in\mF$, there exists a unique solution of (\ref{eq:gencauchy}) defined on a subinterval $J_0$ of $J$ containing $0$ such that $V(z,\Delta,0)=V_0(z,\Delta)$ for all $(z,\Delta)\in\D\times\R_{*}^{+}$.
\end{thm}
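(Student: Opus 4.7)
The plan is to replicate the argument of Theorem \ref{thm:exun} in the Banach space $\mF = L^\infty(\D\times\R_{*}^{+})$, casting (\ref{eq:gencauchy}) as the abstract Cauchy problem $V'(t) = f^g(t,V(t))$, $V(0) = V_0$, with $f^g$ the mapping defined in (\ref{eq:map}), and invoking the Cauchy-Lipschitz theorem on a Banach space. Proposition \ref{prop:wellp} already guarantees that $f^g$ sends $J\times\mF$ into $\mF$, so only two things remain to be verified: a Lipschitz estimate in the state variable that is uniform in $t\in J$, and joint continuity in $(t,\phi)$.

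The Lipschitz estimate hinges on the three-dimensional analogue of Lemma \ref{lem:invariance}, namely the identity
\[
\int_{\D\times\R_{*}^{+}}\left|W\!\left(d_2(z,z'),\left|\log\tfrac{\Delta}{\Delta'}\right|,t\right)\right|\dfrac{d\Delta'}{\Delta'}\text{dm}(z') \;=\; \|\mbW(t)\|_{L^1} \qquad \forall(z,\Delta)\in\D\times\R_{*}^{+},
\]
which I would obtain by first performing the change of variable $\Delta'\mapsto\Delta\Delta'$ (eliminating the $\Delta$-dependence using the bi-invariance of the Haar measure $d\Delta'/\Delta'$ on $\R_{*}^{+}$), and then invoking Lemma \ref{lem:invariance} on each slice $\{\Delta'=\mathrm{const}\}$ to eliminate the $z$-dependence. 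Geometrically, this simply reflects that $d_0$ is left-invariant under the action of $\mathrm{SL}(2,\R)\times\R_{*}^{+}$ on $\D\times\R_{*}^{+}$ by $(g,\lambda)\cdot(z,\Delta)=(g\cdot z,\lambda\Delta)$. Combined with $|S(u)-S(v)|\leq S'_m|u-v|$, this gives the bound
\[
\|f^g(t,x)-f^g(t,y)\|_\mF \;\leq\; (\alpha + S'_m\,\mW_0)\,\|x-y\|_\mF, \qquad t\in J,
\]
where $\mW_0 := \sup_{t\in J}\|\mbW(t)\|_{L^1} < +\infty$ by (\textbf{H3}bis).

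For joint continuity, I would split $f^g(t,x)-f^g(s,y)$ into the Lipschitz contribution above plus a remainder that is controlled, after the same change of variable, by $S^m\|\mbW(t)-\mbW(s)\|_{L^1} + \|I(t)-I(s)\|_\mF$. The first term tends to $0$ as $s\to t$ by (\textbf{H2}bis), the second by continuity of $I$. The Cauchy-Lipschitz theorem on the Banach space $\mF$ then furnishes a unique maximal solution defined on some subinterval $J_0\subset J$ containing $0$. The only step that is not a direct transcription of the semi-homogeneous proof is the invariance identity above; once it is in hand, the Lipschitz constant is uniform in $t$ and the rest of the argument is mechanical.
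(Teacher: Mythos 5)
Your proposal is correct and follows essentially the same route as the paper, which simply states that the proof is a direct adaptation of that of Theorem \ref{thm:exun}; you carry out that adaptation explicitly, and the invariance identity you supply (via the change of variable $\Delta'\mapsto\Delta\Delta'$ together with Lemma \ref{lem:invariance}) is exactly the detail the paper leaves implicit.
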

\begin{proof}
 The proof is a direct adaptation of the proof of theorem \ref{thm:exun}.
\end{proof}
\begin{prop}
 If the external current $I$ belongs to $\mC(\R,\mF)$ and $W$ satisfies hypotheses (\textbf{H1}bis)-(\textbf{H3}bis) with $J=\R$, then for all $V_0\in\mF$, there exists a unique solution of (\ref{eq:gencauchy}) defined on $\R$ such that $V(z,\Delta,0)=V_0(z,\Delta)$ for all $(z,\Delta)\in\D\times\R_{*}^{+}$.
\end{prop}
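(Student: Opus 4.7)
The plan is to mimic, word for word, the structure of Proposition~\ref{prop:exunext} from the semi-homogeneous case: first establish that the right-hand side of \eqref{eq:gencauchy} is globally Lipschitz in its state variable with a Lipschitz constant that is uniform in $t\in\R$, and then invoke the global Cauchy-Lipschitz theorem (theorem~\ref{thm:appendixglobal}) on the Banach space $\mF=L^{\infty}(\D\times\R_{*}^{+})$ to conclude.

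First I would fix $t\in\R$ and $\phi,\psi\in\mF$, and estimate
\bqs
|f^{g}(t,\phi)(z,\Delta)-f^{g}(t,\psi)(z,\Delta)|\leq \alpha\,\|\phi-\psi\|_{\mF}+S'_m\,\|\phi-\psi\|_{\mF}\int_{\D\times\R_{*}^{+}}\left|W\!\left(d_2(z,z'),\left|\log(\Delta/\Delta')\right|,t\right)\right|\frac{d\Delta'}{\Delta'}\,\text{dm}(z').
\eqs
Exactly as in lemma~\ref{lem:invariance} (combined with the change of variable $\Delta'\mapsto \Delta'/\Delta$ already used in section~\ref{subsubsection:simpl}), hypothesis (\textbf{H1}bis) makes the integral above independent of $(z,\Delta)$ and equal to $\|\mbW(t)\|_{L^1}$. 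Taking the supremum over $(z,\Delta)$ and then over $t\in\R$, hypothesis (\textbf{H3}bis) gives
\bqs
\|f^{g}(t,\phi)-f^{g}(t,\psi)\|_{\mF}\leq \bigl(\alpha+S'_m\mW_0\bigr)\|\phi-\psi\|_{\mF},\qquad \forall t\in\R,
\eqs
with $\mW_0=\sup_{t\in\R}\|\mbW(t)\|_{L^1}<+\infty$. Continuity of $f^{g}$ in $t$ follows from (\textbf{H2}bis) by the same $L^{1}$-continuity argument as in the proof of theorem~\ref{thm:exun}, and continuity of $I$ in $t$ is assumed.

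With global Lipschitz continuity in the second argument (uniform in $t$) and joint continuity in $(t,\phi)$ in hand, theorem~\ref{thm:appendixglobal} applies to the Cauchy problem \eqref{eq:gencauchy} posed on the Banach space $\mF$, yielding a unique solution defined on all of $\R$ for every initial datum $V_0\in\mF$. I expect the only mildly delicate point to be the verification that the inner integral in $f^{g}$ does not depend on $(z,\Delta)$ when bounded in $L^{1}$: this is where one must reuse the horocyclic-coordinate invariance argument of lemma~\ref{lem:invariance} together with the elementary dilation $\Delta'\mapsto\Delta'/\Delta$ on $\R_{*}^{+}$. Once that uniform bound is secured, extension of the local solution of the preceding theorem to all of $\R$ is standard, as no blow-up can occur under a globally Lipschitz vector field on a Banach space.
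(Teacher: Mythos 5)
Your proposal is correct and follows essentially the same route as the paper: the paper's proof of this proposition is simply to adapt the proof of proposition~\ref{prop:exunext}, i.e.\ to observe that $f^{g}$ is globally Lipschitz in its second argument uniformly in $t$ (with constant $\alpha+S'_m\mW_0^{g}$, the inner integral being reduced to $\|\mbW(t)\|_{L^1}$ via the dilation $\Delta'\mapsto\Delta'/\Delta$ and the invariance of lemma~\ref{lem:invariance}) and then to apply theorem~\ref{thm:appendixglobal}. No discrepancy to report.
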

\begin{proof}
The proof is readily adapted from that of proposition \ref{prop:exunext}.
\end{proof}
\subsubsection{The intrinsic boundedness of a solution of (\ref{eq:neurco})}

In the same way as in the homogeneous case, we show a result on the boundedness of a solution of (\ref{eq:neurco}). The proofs of the following properties are exactely the same as those in section \ref{subsubsection:intr}.
\begin{prop}
 If the external current $I$ belongs to $\mC(\R^+,\mF)$ and is bounded in time $\sup_{t\in\R^+}\| I(t)\|_{\mF}<+\infty$ and $W$ satisfies hypotheses (\textbf{H1}bis)-(\textbf{H3}bis) with $J=\R^{+}$, then the solution of (\ref{eq:gencauchy}) is bounded for each initial condition $V_0\in\mF$.
\end{prop}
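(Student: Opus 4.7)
The plan is to mimic exactly the argument used in the semi-homogeneous case (section \ref{subsubsection:intr}), transposing the estimates from the space of functions of $z$ alone to the space $\mF = L^\infty(\D\times\R_*^+)$. First I would recast \eqref{eq:gencauchy} as a fixed-point/Duhamel identity: integrating the equation against the integrating factor $e^{\alpha t}$ between $0$ and $t$ gives, for every $(z,\Delta)\in\D\times\R_*^+$,
\begin{equation*}
V(z,\Delta,t)=e^{-\alpha t}V_0(z,\Delta)+\int_0^t e^{-\alpha(t-u)}\!\!\int_{\D\times\R_*^+}\!\!W\!\left(d_2(z,z'),\left|\log\tfrac{\Delta}{\Delta'}\right|,u\right)S\bigl(V(z',\Delta',u)\bigr)\tfrac{d\Delta'}{\Delta'}\mathrm{dm}(z')\,du+\int_0^t e^{-\alpha(t-u)}I(z,\Delta,u)\,du.
\end{equation*}
The existence of this identity is justified by the previous existence/uniqueness theorems, which guarantee that $V$ is a well-defined $\mF$-valued curve on $\R^+$.

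Next I would bound each of the three terms in the $\mF$ norm. The first term is immediate: $\|e^{-\alpha t}V_0\|_\mF \le e^{-\alpha t}\|V_0\|_\mF$. For the nonlinear term, the key estimate comes from hypotheses (\textbf{H1}bis)--(\textbf{H3}bis) together with the invariance argument of Lemma \ref{lem:invariance} (which, as already used in Proposition \ref{prop:wellp}, shows that $\int_{\D\times\R_*^+}|W(d_2(z,z'),|\log(\Delta/\Delta')|,u)|\tfrac{d\Delta'}{\Delta'}\mathrm{dm}(z')$ is independent of $(z,\Delta)$ and equals $\|\mbW(u)\|_{L^1}$). This yields the pointwise bound $S^m\,\mW_0$, where $\mW_0 := \sup_{u\in\R^+}\|\mbW(u)\|_{L^1}<\infty$. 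The third term is bounded by $\sup_{u\in\R^+}\|I(u)\|_\mF$ by hypothesis. Pulling these into the time integral and evaluating $\int_0^t e^{-\alpha(t-u)}du = \alpha^{-1}(1-e^{-\alpha t})$ gives
\begin{equation*}
\|V(t)\|_\mF \le e^{-\alpha t}\|V_0\|_\mF + \frac{1}{\alpha}\Bigl(S^m\,\mW_0 + \sup_{u\in\R^+}\|I(u)\|_\mF\Bigr)\bigl(1-e^{-\alpha t}\bigr),
\end{equation*}
which is the analogue of \eqref{eq:majoration} in the present setting.

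From this explicit estimate the boundedness is immediate: for every $t\ge 0$ one has $\|V(t)\|_\mF \le \|V_0\|_\mF + \alpha^{-1}(S^m\mW_0 + \sup_{u}\|I(u)\|_\mF)$, so the trajectory stays in a fixed ball of $\mF$ determined only by the data. There is essentially no obstacle beyond checking that the invariance/integrability arguments of the semi-homogeneous case carry over verbatim to the $\D\times\R_*^+$ setting, which is exactly what hypotheses (\textbf{H1}bis)--(\textbf{H3}bis) were designed for; once the bound on the convolution kernel is established uniformly in $(z,\Delta,u)$, the rest is the Duhamel inequality already carried out in Proposition~\ref{prop:exunext}'s companion boundedness result.
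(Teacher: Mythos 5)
Your proof is correct and follows essentially the same route as the paper, which simply states that the argument of the semi-homogeneous boundedness result in section \ref{subsubsection:intr} carries over verbatim: the Duhamel identity, the uniform bound $S^m\mW_0^{g}$ on the kernel integral via the invariance of $\|\mbW(u)\|_{L^1}$ in $(z,\Delta)$, and the resulting estimate analogous to \eqref{eq:majoration}. Nothing further is needed.
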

If we set:
\bqs
\rho^{g}\overset{def}{=}\dfrac{2}{\alpha}(S^m \mW_0^{g}+\sup_{t\in\R^+}\| I(t)\|_{\mF})
\eqs
where $\mW_0^{g}=\sup_{t\in \R^{+}}\|\mbW(t)\|_{L^1}$. The following corollary is a consequence of the previous proposition.
\begin{cor}
 If $V_0\notin B_{\rho^{g}}$ and $T^{g}=\inf \{t>0 \text{ such that } V(t)\in B_{\rho^{g}}\}$ then:
\bqs
T^{g}\leq \frac{1}{\alpha}\log\left(\frac{2\| V_0\|_{\mF}-\rho^g}{\rho^g} \right)
\eqs
\end{cor}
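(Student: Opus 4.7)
The statement is a direct analogue, in the general setting, of the attracting-ball proposition proved for the semi-homogeneous case, and the plan is to mimic that proof verbatim, merely replacing $\mW_0$ by $\mW_0^g$ and $\rho$ by $\rho^g$ throughout. The preceding boundedness proposition already does all of the real work: it furnishes, by Duhamel's formula applied to \eqref{eq:gencauchy} and the uniform bounds $|S|\le S^m$ and $\sup_{t}\|\mbW(t)\|_{L^1}\le\mW_0^g$, the key pointwise-in-time estimate
\begin{equation*}
\|V(t)\|_{\mF}\le e^{-\alpha t}\|V_0\|_{\mF}+\frac{1}{\alpha}\bigl(S^m\mW_0^g+\sup_{t\in\R^+}\|I(t)\|_{\mF}\bigr)\bigl(1-e^{-\alpha t}\bigr),
\end{equation*}
which I would first re-derive (or just quote) and then rewrite in the ``$\rho^g/2$'' form
\begin{equation*}
\|V(t)\|_{\mF}\le e^{-\alpha t}\Bigl(\|V_0\|_{\mF}-\frac{\rho^g}{2}\Bigr)+\frac{\rho^g}{2}.
\end{equation*}

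From here the argument is a two-line contradiction. Suppose $\|V(t)\|_{\mF}>\rho^g$ for every $t\ge 0$; then passing to the limit $t\to+\infty$ in the displayed inequality would force $\rho^g\le \rho^g/2$, which is absurd since $\rho^g>0$. Hence the set $\{t>0:V(t)\in B_{\rho^g}\}$ is nonempty and, by continuity of $t\mapsto\|V(t)\|_{\mF}$ (a consequence of the $\mC^1$-regularity of $V$ coming from the existence theorem), its infimum $T^g$ is attained with $\|V(T^g)\|_{\mF}=\rho^g$.

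Plugging $t=T^g$ into the above estimate yields
\begin{equation*}
\rho^g\le e^{-\alpha T^g}\Bigl(\|V_0\|_{\mF}-\frac{\rho^g}{2}\Bigr)+\frac{\rho^g}{2},
\end{equation*}
and rearranging produces $e^{\alpha T^g}\le (2\|V_0\|_{\mF}-\rho^g)/\rho^g$, so that taking logarithms gives the claimed bound on $T^g$. Note that the argument of the logarithm is positive precisely because $V_0\notin B_{\rho^g}$, i.e.\ $\|V_0\|_{\mF}>\rho^g$.

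There is no real obstacle here: the only points requiring mild care are (i) that the general-case Duhamel identity and the $L^\infty$ bound genuinely use hypotheses (\textbf{H1}bis)--(\textbf{H3}bis) through the invariance lemma \ref{lem:invariance} (which ensures $\int_{\D\times\R_*^+}|W(d_2(z,z'),|\log(\Delta/\Delta')|,t)|\,d\Delta'/\Delta'\,\text{dm}(z')$ is independent of $(z,\Delta)$ and bounded by $\mW_0^g$), and (ii) that one should invoke the continuity of $t\mapsto\|V(t)\|_{\mF}$ to convert the ``not everywhere outside $B_{\rho^g}$'' statement into the existence of a first hitting time; both are immediate.
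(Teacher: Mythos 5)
Your proposal is correct and follows exactly the route the paper itself takes: it adapts verbatim the proof of the attracting-ball proposition from the semi-homogeneous case (the Duhamel estimate rewritten in the $\rho^g/2$ form, a contradiction argument to produce a first hitting time, then substitution and rearrangement), which is precisely what the paper intends when it declares the general-case proofs to be ``exactly the same'' as those of section on the boundedness of semi-homogeneous solutions. No gaps.
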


\section{Stationary solutions}
\label{section:stationary}

We look at the equilibrium states, noted $V_{\mu}^{0}$ of (\ref{eq:neurco}), when the external input $I$ and the connectivity $W$ do not depend upon the time. We assume that $W$ satisfies hypotheses (\textbf{H1}bis)-(\textbf{H2}bis). We redefine for convenience the sigmoidal function to be:
\bqs
S(x)=\dfrac{1}{1+e^{-x}},
\eqs
so that a stationary solution (independent of time) satisfies:
\bq
0=-\alpha V_{\mu}^0 (z,\Delta)+\int_{\D}W\left(d_2(z,z'),\left|\log(\frac{\Delta}{\Delta'})\right|\right)S(\mu V_{\mu}^0 (z',\Delta'))\dfrac{d\Delta'}{\Delta'}dm(z')+I(z,\Delta)
\label{eq:genstate}
\eq
We define the nonlinear operator from $\mF$ to $\mF$, noted $\mG_{\mu}$, by:
\bq
\mG_{\mu}(V)(z,\Delta)=\int_{\D\times\R_{*}^{+}}W\left(d_2(z,z'),\left|\log(\frac{\Delta}{\Delta'})\right|\right)S(\mu V(z',\Delta'))\dfrac{d\Delta'}{\Delta'}dm(z')
\label{eq:operator}
\eq
Finally, \eqref{eq:genstate} is equivalent to:
\bqs
\alpha V_{\mu}^0 (z,\Delta)=\mG_{\mu}(V)(z,\Delta)+I(z,\Delta)
\eqs

\subsection{Study of the nonlinear operator $\mG_{\mu}$}

We recall that we have set for the Banach space $\mF=L^{\infty}(\D\times\R_{*}^{+})$ and proposition \ref{prop:wellp} shows that $\mG_{\mu}:\mF\rightarrow\mF$. We have the further properties:
\begin{prop}\label{prop:opG}
 $\mG_{\mu}$ satisfies the following properties:
\begin{itemize}
 \item $\|\mG_{\mu}(V_1)-\mG_{\mu}(V_2) \|_{\mF}\leq \mu W_0^{g}S'_m\|V_1-V_2 \|_{\mF}$ for all $\mu\geq 0$,
 \item $\mu\rightarrow\mG_{\mu}$ is continuous on $\R^+$, 
\end{itemize}
\end{prop}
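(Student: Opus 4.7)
The plan is the following. For the Lipschitz estimate I would fix $V_1,V_2\in\mF$ and write, at each point $(z,\Delta)\in\D\times\R_{*}^{+}$,
\begin{multline*}
\mG_{\mu}(V_1)(z,\Delta)-\mG_{\mu}(V_2)(z,\Delta)=\\
\int_{\D\times\R_{*}^{+}}W\left(d_2(z,z'),\left|\log(\Delta/\Delta')\right|\right)\bigl[S(\mu V_1(z',\Delta'))-S(\mu V_2(z',\Delta'))\bigr]\frac{d\Delta'}{\Delta'}\text{dm}(z').
\end{multline*}
The mean value theorem applied to $S$ yields $|S(\mu x)-S(\mu y)|\leq \mu S'_m |x-y|$ for all $x,y\in\R$ and $\mu\geq 0$, so the bracket is pointwise bounded in absolute value by $\mu S'_m\|V_1-V_2\|_{\mF}$. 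Pulling these constants outside the integral reduces the first claim to controlling
\bqs
I(z,\Delta):=\int_{\D\times\R_{*}^{+}}\left|W(d_2(z,z'),|\log(\Delta/\Delta')|)\right|\frac{d\Delta'}{\Delta'}\text{dm}(z')
\eqs
uniformly in $(z,\Delta)$.

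The key step is therefore to show that $I(z,\Delta)=\mW_0^g$. I would first change variables $\Delta'\mapsto \Delta'/\Delta$, which leaves the Haar measure $d\Delta'/\Delta'$ on $\R_{*}^{+}$ invariant and removes the $\Delta$--dependence. The remaining integration over $\D$ has exactly the form treated in lemma \ref{lem:invariance}, considered parameterwise in $\Delta'$, so the horocyclic--coordinate argument of that lemma applies verbatim and shows that the integral is also independent of $z$. Composing the two reductions identifies $I(z,\Delta)$ with $\|\mbW\|_{L^1}=\mW_0^g$, and taking the essential supremum over $(z,\Delta)$ gives the announced Lipschitz bound.

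For the continuity of $\mu\mapsto\mG_\mu$, I would fix $V\in\mF$ and $\mu_0\geq 0$ and apply the mean value theorem to the increment in $\mu$, obtaining $|S(\mu V(z',\Delta'))-S(\mu_0 V(z',\Delta'))|\leq S'_m |\mu-\mu_0|\,\|V\|_{\mF}$. Inserting this bound into $\mG_\mu(V)-\mG_{\mu_0}(V)$ and invoking the invariance argument above gives $\|\mG_\mu(V)-\mG_{\mu_0}(V)\|_{\mF}\leq S'_m \mW_0^g\|V\|_{\mF}|\mu-\mu_0|$, which proves continuity at $\mu_0$ and in fact Lipschitz continuity on any bounded subset of $\mF$. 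The only minor subtlety is checking that lemma \ref{lem:invariance}, originally stated for integrands depending only on $d_2(z,z')$, still applies in the presence of the extra factor $|\log(\Delta/\Delta')|$; once $\Delta$ has been absorbed by the change of variable this dependence is a harmless parameter for the $\D$--integral, so no real obstacle arises and the argument reduces to bookkeeping on top of the mean value theorem and the invariance lemma.
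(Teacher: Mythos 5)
Your proposal is correct and follows essentially the same route as the paper: the Lipschitz bound is obtained by the mean value theorem applied to $S$ together with the invariance argument of lemma \ref{lem:invariance} (after the change of variable $\Delta'\mapsto\Delta'/\Delta$) to identify the integral of $|W|$ with $\mW_0^g$, exactly as in the proof of theorem \ref{thm:exun} to which the paper defers, and the continuity in $\mu$ rests on the same inequality $\|\mG_{\mu_1}(V)-\mG_{\mu_2}(V)\|_{\mF}\leq |\mu_1-\mu_2|\,W_0^{g}S'_m\|V\|_{\mF}$ that the paper states. The only difference is that you spell out the details the paper leaves implicit.
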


\begin{proof}
The first property was shown to be true in the proof of theorem \ref{thm:exun}. The second property follows from the following inequality:
\bqs
\|\mG_{\mu_1}(V)-\mG_{\mu_2}(V) \|_{\mF}\leq |\mu_1-\mu_2| W_0^{g}S'_m\|V\|_{\mF}
\eqs
\end{proof}

\noindent
We denote by $\mG_{l}$ and $\mG_{\infty}$ the two operators from $\mF$ to $\mF$ defined as follows for all $V \in \mF$ and all $(z,\Delta)\in\D\times\R_{*}^{+}$:
\bq
\mG_l(V)(z,\Delta)=\int_{\D\times\R_{*}^{+}}W\left(d_2(z,z'),\left|\log(\frac{\Delta}{\Delta'})\right|\right) V(z',\Delta')\dfrac{d\Delta'}{\Delta'}dm(z'),
\label{eq:operatorlinear}
\eq
and 
\bqs
\mG_{\infty}(V)(z,\Delta)=\int_{\D\times\R_{*}^{+}}W\left(d_2(z,z'),\left|\log(\frac{\Delta}{\Delta'})\right|\right)H(V(z',\Delta'))\frac{d\Delta'}{\Delta'}dm(z')
\eqs
where $H$ is the Heaviside function.

It is straightforward to show that both operators are well-defined on $\mF$ and map $\mF$ to $\mF$. Moreover the following proposition holds.
\begin{prop}
We have
 \bqs
\mG_{\mu}\underset{\mu \rightarrow \infty}{\longrightarrow}\mG_{\infty}
\eqs
\end{prop}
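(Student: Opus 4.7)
The plan is to prove, for each fixed $V \in \mF$, that $\|\mG_\mu(V) - \mG_\infty(V)\|_\mF \to 0$ as $\mu \to \infty$, by quantifying how $S(\mu x)$ approaches $H(x)$ and combining this with the integrability hypothesis (\textbf{H3}bis) on the kernel via dominated convergence.

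First I would form the pointwise difference
\begin{equation*}
\bigl(\mG_\mu(V)-\mG_\infty(V)\bigr)(z,\Delta)=\int_{\D\times\R_{*}^{+}} W\Bigl(d_2(z,z'),\bigl|\log(\tfrac{\Delta}{\Delta'})\bigr|\Bigr)\bigl[S(\mu V(z',\Delta'))-H(V(z',\Delta'))\bigr]\,\dfrac{d\Delta'}{\Delta'}\,dm(z'),
\end{equation*}
and record the elementary estimate $|S(\mu x)-H(x)|\leq e^{-\mu|x|}$ valid for all $x\neq0$, together with the convention $H(0)=1/2=S(0)$ so that the integrand is well controlled on $\{V=0\}$ as well.

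Next, for arbitrary $\eta>0$, I would split the integration domain into $\{|V|>\eta\}$ and $\{|V|\leq\eta\}$. On the first set the integrand is bounded pointwise by $e^{-\mu\eta}|W(\cdot,\cdot)|$, so using the invariance contained in Lemma \ref{lem:invariance} (which guarantees $\int_{\D\times\R_{*}^{+}}|W(d_2(z,z'),|\log(\Delta/\Delta')|)|\,\frac{d\Delta'}{\Delta'}dm(z')=\mW_0^g$ independently of $(z,\Delta)$) yields the uniform bound $e^{-\mu\eta}\mW_0^g$. On the second set the integrand is bounded simply by $|W|$, and I would invoke the dominated convergence theorem (the dominating function $|W(d_2(z,\cdot),|\log(\Delta/\cdot)|)|$ is integrable by (\textbf{H3}bis)) to conclude that the integral tends to $0$ as $\mu\to\infty$, at least pointwise in $(z,\Delta)$. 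Sending first $\mu\to\infty$ and then $\eta\to 0$ gives the claimed convergence.

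The main obstacle I foresee is passing from pointwise convergence in $(z,\Delta)$ to convergence in the $\mF=L^\infty$ norm: the contribution coming from the set $\{|V|\leq\eta\}$ depends on $(z,\Delta)$ through the location of the kernel, and controlling it uniformly in $(z,\Delta)$ generally requires either some equicontinuity of $(z,\Delta)\mapsto \int_{\{|V|\leq\eta\}}|W(d_2(z,z'),|\log(\Delta/\Delta')|)|\,\frac{d\Delta'}{\Delta'}dm(z')$ or the a priori assumption that $\{V=0\}$ has measure zero in $\D\times\R_{*}^{+}$. This is where I would need to be most careful, possibly by exploiting the invariance of Lemma \ref{lem:invariance} a second time to reduce the estimate to an integral over a fixed centered region and then applying an absolute-continuity-of-the-integral argument.
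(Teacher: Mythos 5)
Your estimate $|S(\mu x)-H(x)|\le e^{-\mu|x|}$ and the splitting of the domain into $\{|V|>\eta\}$ and $\{|V|\le\eta\}$ are correct, and they give a quantitative version of what the paper actually proves. The paper's own proof is a single line: dominated convergence plus the fact that $S(\mu y)\to H(y)$ for a.e.\ $y\in\R$. Read literally, that only yields convergence of the scalar $\mG_{\mu}(V)(z,\Delta)$ to $\mG_{\infty}(V)(z,\Delta)$ for each \emph{fixed} $(z,\Delta)$, and even that requires the set $\{V=0\}$ to carry no mass for the kernel measure, since $S(0)=\tfrac12$ while the paper's Heaviside has $H(0)=1$; your convention $H(0)=\tfrac12$ quietly repairs this point, which the paper's ``p.p.'' glosses over. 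Up to there your argument is sound and more careful than the paper's.

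However, the obstacle you flag at the end is not a technicality that a sharper estimate will remove: the norm convergence $\|\mG_{\mu}(V)-\mG_{\infty}(V)\|_{\mF}\to 0$ is genuinely false for some $V\in\mF=L^{\infty}$. Take $W\ge 0$ with $\mW_0^g>0$ and $V(z',\Delta')=e^{-d_2(z',0)}$, which is everywhere positive, so that $\mG_{\infty}(V)\equiv\mW_0^g$. For fixed $\mu$, pick $R_0$ so that the kernel mass outside a ball of radius $R_0$ is less than $\epsilon$ (possible by (\textbf{H3}bis) and invariance); if $d_2(z,0)>R+R_0$ then $V\le e^{-R}$ on the region carrying the bulk of the kernel centred at $(z,\Delta)$, whence $\mG_{\mu}(V)(z,\Delta)\le S(\mu e^{-R})\,\mW_0^g+\epsilon$, and letting $R\to\infty$ gives $\sup_{z,\Delta}|\mG_{\mu}(V)-\mG_{\infty}(V)|\ge\tfrac12\mW_0^g-2\epsilon$ for \emph{every} $\mu$. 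The invariance of Lemma \ref{lem:invariance} cannot rescue the uniform estimate because the sublevel sets $\{|V|\le\eta\}$ are not themselves invariant. So your plan, as a proof of convergence in the $L^{\infty}$ operator sense, cannot be completed without additional hypotheses on $V$ (e.g.\ $|V|$ bounded away from zero outside a compact set, or uniform smallness over $(z,\Delta)$ of the kernel mass of $\{0<|V|\le\eta\}$); if instead you only claim pointwise convergence in $(z,\Delta)$ for each $V$ whose zero set is kernel-negligible, your argument is complete and coincides in substance with the paper's dominated-convergence proof.
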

\begin{proof}
 It is a direct application of the dominated convergence theorem using the fact that:
\bqs
S(\mu y)\underset{\mu \rightarrow \infty}{\longrightarrow}H(y)\quad \text{ p.p }y\in\R
\eqs
\end{proof}

\subsection{The convolution form of the operator $\mG_{\mu}$ in the semi-homogeneous case} 
It is convenient to consider the functional space $\mF^{sh}=L^{\infty}(\D)$ to discuss semi-homogeneous solutions.
A semi-homogeneous persistent state of (\ref{eq:neurco}) is deduced from (\ref{eq:genstate}) and satisfies:
\bq
\alpha V_{\mu}^0 (z)=\mG_{\mu}^{sh}(V_{\mu}^0 )(z)+I(z)
\label{eq:semhomstate}
\eq
where the nonlinear operator $\mG_{\mu}^{sh}$ from $\mF^{sh}$ to $\mF^{sh}$ is defined for all $V \in \mF^{sh}$ and  $z\in\D$ by:
\bqs
\mG_{\mu}^{sh}(V)(z)=\int_{\D}W^{sh}(d_2(z,z'))S(\mu V(z'))dm(z')
\eqs
We define the associated operators, $\mG_{l}^{sh},\mG_{\infty}^{sh}$:
\bqs
\mG_{l}^{sh}(V)(z)=\int_{\D}W^{sh}(d_2(z,z'))V(z')dm(z')
\eqs
\bqs
\mG_{\infty}^{sh}(V)(z)=\int_{\D}W^{sh}(d_2(z,z'))H(V(z'))dm(z')
\eqs
We rewrite the operator $\mG_{\mu}^{sh}$ in a convenient form by using the convolution in the hyperbolic disk. First, we define the convolution in a such space. Let $dg$ denote the Haar measure on the group $G=SU(1,1)$ (see \cite{helgason:00}), normalized by:
\bqs
\int_G f(g\cdot O)dg\overset{def}{=}\int_{\D}f(z)dm(z),
\eqs
for all functions of $L^1(\D)$.
Given two functions $f_1,f_2$ in $L^1(\D)$ we define the convolution $\ast$ by:
\bqs
(f_1\ast f_2)(z)=\int_{G}f_1(g\cdot O)f_2(g^{-1}\cdot z)dg
\eqs
We recall the notation $\textbf{W}^{sh}(z)\overset{def}{=}W^{sh}(d_2(z,O))$.

\begin{prop}\label{prop:convolution}
 For all $\mu\geq0$ and $V\in\mF^{sh}$ we have:
\bq
\mG_{\mu}^{sh}(V)=\textbf{W}^{sh}\ast S(\mu V),\quad \mG_{l}^{sh}(V)=\textbf{W}^{sh}\ast V\ \text{ and }\ \mG_{\infty}^{sh}(V)=\textbf{W}^{sh}\ast H(V)
\eq
\end{prop}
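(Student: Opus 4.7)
The plan is to unfold the hyperbolic convolution and show by a change of variable on $G=SU(1,1)$ that the resulting $G$-integral coincides with the one defining $\mG_\mu^{sh}(V)(z)$. Since the three identities claimed by the proposition differ only through the nonlinearity applied to $V$ (respectively $S(\mu\cdot)$, identity, and $H$), it is enough to establish the generic identity
\begin{equation*}
(\mathbf{W}^{sh}\ast f)(z)\;=\;\int_\D W^{sh}(d_2(z,z'))\,f(z')\,\text{dm}(z')
\end{equation*}
for every $f\in\mF^{sh}$, and then specialize to $f=S(\mu V)$, $f=V$, and $f=H(V)$.

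My proof would combine three ingredients. (i) The hyperbolic distance is $G$-invariant, $d_2(g\cdot z_1,g\cdot z_2)=d_2(z_1,z_2)$ for all $g\in G$, which in particular yields $W^{sh}(d_2(g^{-1}\cdot z,O))=W^{sh}(d_2(z,g\cdot O))$ and the symmetry $\mathbf{W}^{sh}(g^{-1}\cdot O)=\mathbf{W}^{sh}(g\cdot O)$. (ii) The group $SU(1,1)$ is unimodular, so $dg$ is invariant under $g\mapsto g^{-1}$ and under left and right translations. (iii) The normalization $\int_G F(g\cdot O)\,dg=\int_\D F(z')\,\text{dm}(z')$ converts any $G$-integral whose integrand depends on $g$ only through $g\cdot O\in\D$ into a $\D$-integral against the hyperbolic measure. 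Starting from $(\mathbf{W}^{sh}\ast f)(z)=\int_G \mathbf{W}^{sh}(g\cdot O)\,f(g^{-1}\cdot z)\,dg$, I would use (ii) to perform a change of variable on $G$, invoke (i) to transfer the $z$-dependence from the factor $f$ onto the factor $W^{sh}$, and then apply (iii) once the integrand depends on $g$ only through $g\cdot O$, obtaining precisely the right-hand side displayed above.

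The main obstacle is that $G$ is non-abelian, so the $G$-convolution is not commutative in general and the equality cannot be obtained by a purely formal manipulation; the reduction to $\D$ has to be carried out by an explicit change of variable on $G$, after which one must check that the integrand has indeed become right-$K$-invariant (with $K=\mathrm{Stab}_G(O)$) so that (iii) applies. The essential structural fact making this possible is the $K$-bi-invariance of $\mathbf{W}^{sh}$, an immediate consequence of its radial form $\mathbf{W}^{sh}(z)=W^{sh}(d_2(z,O))$ and of the isometric action of $K$ on $\D$.
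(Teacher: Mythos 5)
Your toolbox is the right one---isometric invariance of $d_2$, unimodularity of $G$, and the normalization $\int_G F(g\cdot O)\,dg=\int_{\D}F(z')\,dm(z')$---but you run the computation in the opposite (and harder) direction from the paper, and the obstacle you flag at the end is not a formality: it is precisely where your plan breaks down. Unfolding $(\textbf{W}^{sh}\ast f)(z)=\int_G\textbf{W}^{sh}(g\cdot O)f(g^{-1}\cdot z)\,dg$ and performing the substitutions you describe ($g\mapsto g^{-1}$, then $g\mapsto uh^{-1}$ with $z=h\cdot O$) leaves $\int_G W^{sh}\bigl(d_2(uh^{-1}\cdot O,O)\bigr)f(u\cdot O)\,du$. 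The factor $f(u\cdot O)$ is now right-$K$-invariant in $u$, but the kernel factor is not: $d_2(ukh^{-1}\cdot O,O)\neq d_2(uh^{-1}\cdot O,O)$ for general $k\in K$ (take $u=a_r$, $h^{-1}=a_s$, $k=\mathrm{rot}_{\pi}$: one gets $r+s$ versus $|r-s|$). The point is that $g\mapsto d_2(g\cdot O,O)$ is $K$-bi-invariant but is not a class function on $G$, so $K$-bi-invariance of $\textbf{W}^{sh}$ does not let you slide $k$ past $h^{-1}$, and your ingredient (iii) cannot be applied. What the three ingredients do deliver cleanly is the identity with the radial factor in the \emph{second} slot: $\int_{\D}W^{sh}(d_2(z,z'))f(z')\,dm(z')=\int_G W^{sh}(d_2(g^{-1}\cdot z,O))f(g\cdot O)\,dg=(f\ast\textbf{W}^{sh})(z)$.

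The paper sidesteps all of this by starting from the integral-operator side: in $\mG_{\mu}^{sh}(V)(z)=\int_{\D}W^{sh}(d_2(z,z'))S(\mu V(z'))\,dm(z')$ the integrand $W^{sh}(d_2(z,g\cdot O))S(\mu V(g\cdot O))$ is manifestly a function of $g\cdot O$ alone, so the normalization converts it to a $G$-integral immediately, and one application of $d_2(z,g\cdot O)=d_2(g^{-1}\cdot z,O)$ exhibits the convolution---in the order $S(\mu V)\ast\textbf{W}^{sh}$. (The paper then records the factors in the order $\textbf{W}^{sh}\ast S(\mu V)$; with the stated definition of $\ast$ on the non-abelian group $G$ that final commutation is itself the delicate point your remark about non-commutativity correctly identifies, and it does not follow from $K$-bi-invariance of $\textbf{W}^{sh}$ alone when the other factor is not radial.) To repair your argument, either prove the identity in the form $(f\ast\textbf{W}^{sh})(z)=\int_{\D}W^{sh}(d_2(z,z'))f(z')\,dm(z')$ and specialize $f$ as you intended, or start from the $\D$-integral as the paper does; as written, the right-$K$-invariance check you defer will fail.
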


\begin{proof}
We only prove the result for $\mG_\mu$.
 Let $z\in\D$, then:
\begin{multline*}
\mG_{\mu}^{sh}(V)(z)=\int_{\D}W^{sh}(d_2(z,z'))S(\mu V(z'))dm(z')=\int_{G}W^{sh}(d_2(z,g\cdot O))S(\mu V(g\cdot O))dg\\
=\int_{G}W^{sh}(d_2(gg^{-1}\cdot z,g\cdot O))S(\mu V(g\cdot O))dg
\end{multline*}
and for all $g\in SU(1,1)$, $d_2(z,z')=d_2(g\cdot z,g\cdot z')$ so that:
\bqs
\mG_{\mu}^{sh}(V)(z)=\int_{G}W^{sh}(d_2(g^{-1}\cdot z,O))S(\mu V(g\cdot O))dg=\textbf{W}^{sh}\ast S(\mu V)(z)
\eqs
\end{proof}
Let $b$ be a point on the circle $\partial \D$. For $z\in\D$, we define the ``inner product'' $<z,b>$ to be the algebraic distance to the origin of the (unique) horocycle based at $b$ through $z$ (see \cite{chossat-faugeras:09}). Note that $<z,b>$ does not depend on the position of $z$ on the horocycle.
The Fourier transform in $\D$ is defined as (see \cite{helgason:00}):
\bqs
\widetilde{h}(\lambda,b)=\int_{\D}h(z)e^{(-i\lambda +1)<z,b>}dm(z)\quad \forall (\lambda,b)\in\R\times\partial\D
\eqs
for a function $h:\D\rightarrow \mathbb{C}$ such that this integral is well-defined.
\begin{lem}
 The Fourier transform in $\D$, $\widetilde{\textbf{W}}^{sh}(\lambda,b)$ of $\textbf{W}^{sh}$ does not depend upon the variable $b\in\partial\D$.
\end{lem}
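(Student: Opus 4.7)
The plan is to exploit the rotational invariance of $\textbf{W}^{sh}$. Since $\textbf{W}^{sh}(z) = W^{sh}(d_2(z,O))$ depends only on the hyperbolic distance to the origin, it is invariant under the isotropy subgroup $K \subset G = SU(1,1)$ fixing $O$ (the hyperbolic rotations). The Fourier kernel $e^{(-i\lambda+1)\langle z,b\rangle}$ is not $K$-invariant, but it behaves equivariantly under the simultaneous $K$-action on $z$ and $b$, and this is exactly what is needed.

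First I would fix two points $b, b' \in \partial\mathbb{D}$. Since $K$ acts transitively on $\partial\mathbb{D}$, there exists $k \in K$ such that $k\cdot b = b'$. Then I would write
\begin{equation*}
\widetilde{\textbf{W}}^{sh}(\lambda,b') = \int_{\mathbb{D}} \textbf{W}^{sh}(z)\, e^{(-i\lambda+1)\langle z, k\cdot b\rangle}\, dm(z),
\end{equation*}
and perform the change of variables $z = k\cdot z'$. Three invariance properties then need to be invoked: the measure $dm$ is $G$-invariant, hence in particular $K$-invariant, so $dm(z)=dm(z')$; the function $\textbf{W}^{sh}$ is $K$-invariant, since $d_2(k\cdot z', O) = d_2(k\cdot z', k\cdot O) = d_2(z',O)$; and the horocyclic inner product satisfies $\langle k\cdot z', k\cdot b\rangle = \langle z',b\rangle$. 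Substituting, the integral becomes $\widetilde{\textbf{W}}^{sh}(\lambda,b)$, proving the lemma.

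The step requiring the most care is the identity $\langle k\cdot z, k\cdot b\rangle = \langle z,b\rangle$. I would justify it geometrically: by definition, $\langle z,b\rangle$ is the signed hyperbolic distance from $O$ to the horocycle based at $b$ passing through $z$. Since $k$ is a hyperbolic isometry that fixes $O$ and maps horocycles to horocycles (the horocycle at $b$ through $z$ being sent to the horocycle at $k\cdot b$ through $k\cdot z$), the signed distance to $O$ is preserved. Alternatively, one can rely on the classical formula $\langle z,b\rangle = \tfrac{1}{2}\log\tfrac{1-|z|^2}{|z-b|^2}$ in the Poincaré disk and check directly that Möbius transformations in $K$ (Euclidean rotations around $O$) preserve both the numerator and denominator.

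Everything else is a straightforward application of Fubini and the $G$-invariance of the Haar-induced measure $dm$ already used in Proposition \ref{prop:convolution}. Because the right-hand side of the chain of equalities no longer involves $b'$, we conclude $\widetilde{\textbf{W}}^{sh}(\lambda,b') = \widetilde{\textbf{W}}^{sh}(\lambda,b)$ for arbitrary $b,b'\in \partial\mathbb{D}$, which establishes that $\widetilde{\textbf{W}}^{sh}$ depends only on $\lambda$.
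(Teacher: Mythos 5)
Your proposal is correct and follows essentially the same route as the paper: the paper also changes variables by the rotation sending $b=e^{i\theta}$ to $1$ and invokes exactly the three invariances you list ($\textbf{W}^{sh}(r_{\phi}\cdot z)=\textbf{W}^{sh}(z)$, $dm(r_{\phi}\cdot z)=dm(z)$, and $\langle r_{\phi}\cdot z,r_{\phi}\cdot b\rangle=\langle z,b\rangle$). Your added justification of the horocyclic identity is a welcome detail the paper omits; the reference to Fubini is superfluous but harmless.
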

\begin{proof}
 For  all $\lambda\in\R$ and $b=e^{i\theta}\in\partial\D$,
\bqs
\widetilde{\textbf{W}}^{sh}(\lambda,b)=\int_{\D}\textbf{W}^{sh}(z)e^{(-i\lambda +1)<z,b>}dm(z).
\eqs
We recall that for all $\phi\in\R$ $r_{\phi}$ is the rotation of angle $\phi$ and we have $\textbf{W}^{sh}(r_{\phi}\cdot z)=\textbf{W}^{sh}(z)$, $dm(z)=dm(r_{\phi}\cdot z)$ and $<z,b>=<r_{\phi}\cdot z,r_{\phi}\cdot b>$, then:
\bqs
\widetilde{\textbf{W}}^{sh}(\lambda,b)=\int_{\D}\textbf{W}^{sh}(r_{-\theta}\cdot z)e^{(-i\lambda +1)<r_{-\theta}\cdot z,1>}dm(z)=\int_{\D}\textbf{W}^{sh}(z)e^{(-i\lambda +1)<z,1>}dm(z)\overset{def}{=}\widetilde{\textbf{W}}^{sh}(\lambda)
\eqs
\end{proof}
We now introduce two functions that enjoy some nice properties with respect to the Hyperbolic Fourier transform and are eigenfunctions of the linear operator $\mG_{l}^{sh}$.
\begin{prop}\label{prop:eigen}
 Let $e_{\lambda,b}(z)=e^{(-i\lambda +1)<z,b>}$ and $\Phi_{\lambda}(z)=\int_{\partial\D}e^{(i\lambda +1)<z,b>}db$ then:
\begin{itemize}
 \item $\mG_{l}^{sh}(e_{\lambda,b})=\widetilde{\textbf{W}}^{sh}(\lambda)e_{\lambda,b}$
 \item $\mG_{l}^{sh}(\Phi_{\lambda})=\widetilde{\textbf{W}}^{sh}(\lambda)\Phi_{\lambda}$
\end{itemize}
\end{prop}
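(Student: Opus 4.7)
The plan is to combine the convolution representation $\mG_l^{sh}(V)=\textbf{W}^{sh}\ast V$ provided by Proposition \ref{prop:convolution} with the cocycle behaviour of the horocyclic bracket under the isometry group $G=SU(1,1)$, namely $<g\cdot z,b>\,=\,<z,g^{-1}\cdot b>+<g\cdot O,b>$. This transformation law, which expresses how Busemann functions shift under isometries, is the standard mechanism by which radial convolutions on $\D$ are diagonalised by Helgason--Fourier plane waves; once it is in hand, the whole proposition follows by a direct change of variables.

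For the first identity, I would start from $\mG_l^{sh}(e_{\lambda,b})(z)=\int_\D W^{sh}(d_2(z,z'))\,e^{(-i\lambda+1)<z',b>}\,\text{dm}(z')$, pick any $g\in G$ with $g\cdot O=z$, and apply the substitution $z'=g\cdot w$. The $G$-invariance of $d_2$ and $\text{dm}$ turns the kernel into $\textbf{W}^{sh}(w)$, and the cocycle splits the exponential as $e^{(-i\lambda+1)<z,b>}\cdot e^{(-i\lambda+1)<w,g^{-1}\cdot b>}$. The prefactor, which is exactly $e_{\lambda,b}(z)$, comes out of the integral, while what remains equals the hyperbolic Fourier transform $\widetilde{\textbf{W}}^{sh}(\lambda,g^{-1}\cdot b)$; the lemma immediately preceding the proposition then identifies this with $\widetilde{\textbf{W}}^{sh}(\lambda)$ irrespective of the boundary point.

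For the second identity, I would substitute $\Phi_\lambda(z')=\int_{\partial\D}e^{(i\lambda+1)<z',b>}\,db$ into $\mG_l^{sh}(\Phi_\lambda)(z)$ and interchange the two integrations by Fubini, justified by hypothesis (\textbf{C3}) together with the uniform boundedness in $b$ of the Poisson-type factor $e^{(i\lambda+1)<\cdot,b>}$ on each compact region. Recognising $e^{(i\lambda+1)<\cdot,b>}$ as $e_{-\lambda,b}$, the inner integral becomes $\mG_l^{sh}(e_{-\lambda,b})(z)$, which by the first identity equals $\widetilde{\textbf{W}}^{sh}(-\lambda)\,e_{-\lambda,b}(z)$. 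Reassembling the outer integral yields $\widetilde{\textbf{W}}^{sh}(-\lambda)\,\Phi_\lambda(z)$, and one matches the stated eigenvalue by invoking $\widetilde{\textbf{W}}^{sh}(-\lambda)=\widetilde{\textbf{W}}^{sh}(\lambda)$ for radial kernels, equivalent to the classical identity $\Phi_\lambda=\Phi_{-\lambda}$ on the rank-one symmetric space $\D$.

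The main obstacle is exactly the two pieces of classical harmonic-analytic input on which this all rests: the cocycle identity for the horocyclic bracket, and the evenness of the spherical transform. Neither is developed in the excerpt, so both would need to be cited (for instance from Helgason \cite{helgason:00}). The cocycle identity is the functoriality of the Busemann function under isometries; the evenness reflects the Weyl-group symmetry that sends $\lambda$ to $-\lambda$ and embodies the fact that $\Phi_\lambda$ and $\Phi_{-\lambda}$ are radial solutions of the same Laplace--Beltrami eigenvalue problem, hence coincide. Once these are granted, everything else is a routine change-of-variable and Fubini argument.
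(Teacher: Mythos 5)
Your proof is correct, but it takes a genuinely different route from the paper's on both bullets. For the first identity the paper does not invoke the cocycle law abstractly: it computes $\mG_{l}^{sh}(e_{\lambda,1})$ explicitly in horocyclic coordinates $z=n_sa_t\cdot O$, repeating the chain of substitutions from lemma \ref{lem:invariance} (the shift $s-s'=-xe^{2t}$ followed by the relation $n_{-xe^{2t}}a_{t}\cdot O=a_{t}n_{-x}\cdot O$), which makes the factor $e^{(-i\lambda+1)<n_sa_t\cdot O,1>}$ and the integral $\widetilde{\textbf{W}}^{sh}(\lambda)$ appear by hand, and then extends to general $b$ by rotation. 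Your version --- substitute $z'=g\cdot w$ with $g\cdot O=z$, split the exponential via $<g\cdot w,b>=<w,g^{-1}\cdot b>+<g\cdot O,b>$, and quote the lemma preceding the proposition to remove the dependence on $g^{-1}\cdot b$ --- is cleaner and coordinate-free, at the price of importing the cocycle identity from Helgason, which the paper never states (its horocyclic computation is in effect a hands-on proof of the special case it needs). For the second identity the paper simply cites Helgason's Lemma 4.7, i.e.\ the functional equation $\textbf{W}^{sh}\ast\Phi_{\lambda}=\widetilde{\textbf{W}}^{sh}(\lambda)\Phi_{\lambda}$ for radial kernels, whereas you rederive it from the first bullet by Fubini together with the Weyl symmetry $\widetilde{\textbf{W}}^{sh}(-\lambda)=\widetilde{\textbf{W}}^{sh}(\lambda)$ (equivalently $\Phi_{\lambda}=\Phi_{-\lambda}$, a fact the paper itself invokes later in section \ref{section:bump}); that is more self-contained but requires one extra classical input. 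One small point: your Fubini justification should rest on $\int_{\partial\D}\left|e^{(i\lambda+1)<z',b>}\right|db=\Phi_0(z')\leq 1$ combined with (\textbf{C3}), rather than on uniform boundedness in $b$ of the Poisson-type factor, since $e^{<z',b>}$ blows up as $z'$ approaches $b$; this is cosmetic and does not affect the validity of the argument.
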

\begin{proof}
We begin with $b=1\in\partial\D$ and use the horocyclic coordinates. We use the same changes of variables as in lemma \ref{lem:invariance}:
\bqs
\mG_{l}^{sh}(e_{\lambda,1})(n_sa_t.O)=
\int_{\R^2}W^{sh}(d_2(n_{s}a_{t}\cdot O,n_{s'}a_{t'}\cdot O))e^{(-i\lambda -1)t'}dt'ds'
\eqs
\bqs
=\int_{\R^2}W^{sh}(d_2(n_{s-s'}a_{t}\cdot O,a_{t'}\cdot O))e^{(-i\lambda -1)t'}dt'ds'
\eqs
\bqs
=\int_{\R^2}W^{sh}(d_2(a_{t}n_{-x}\cdot O,a_{t'}\cdot O))e^{(-i\lambda -1)t'+2t}dt'dx=\int_{\R^2}W^{sh}(d_2(O,n_{x}a_{t'-t}\cdot O))e^{(-i\lambda -1)t'+2t}dt'dx
\eqs
\bqs
=\int_{\R^2}W^{sh}(d_2(O,n_{x}a_{y}\cdot O))e^{(-i\lambda -1)(y+t)+2t}dydx=e^{(-i\lambda +1)<n_sa_t\cdot O,1>}\widetilde{\textbf{W}}^{sh}(\lambda)
\eqs
By rotation, we obtain the property for all $b\in\partial\D$.

For the second property \cite[Lemma 4.7]{helgason:00} shows that:
\bqs
\textbf{W}^{sh}\ast \Phi_{\lambda}(z)=\int_{\partial\D}e^{(i\lambda +1)<z,b>}\widetilde{\textbf{W}}^{sh}(\lambda)db=\Phi_{\lambda}(z)\widetilde{\textbf{W}}^{sh}(\lambda)
\eqs

\end{proof}

A consequence of this proposition is the following lemma.

\begin{lem}\label{lem:noncompact}
 The linear operator $\mG_l^{sh}$ is not compact and for all $\mu\geq 0$, the nonlinear operator $\mG_{\mu}^{sh}$ is not compact.
\end{lem}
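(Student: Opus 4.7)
The idea is to exploit the $G = SU(1,1)$-equivariance of the convolution form established in Proposition \ref{prop:convolution}: translating a compactly supported bump along a hyperbolic geodesic produces a bounded sequence in $\mF^{sh}$ whose image under $\mG_l^{sh}$ (resp.\ $\mG_\mu^{sh}$) is a sequence of $G$-translates of a single fixed function that decays at the boundary of $\D$. Such translates cannot be Cauchy in $L^\infty$, ruling out compactness.

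\textbf{Setup, equivariance, and decay.}
Fix $V_0 = \mathbf{1}_{B_R(O)} \in L^\infty(\D)$ with $R > 0$ small, and pick $g_n \in G$ with $d_2(g_n \cdot O, O) \to \infty$ (for instance $g_n = a_n$ in the horocyclic parametrisation). Set $V_n := V_0 \circ g_n^{-1}$, a bounded sequence with $\|V_n\|_{\mF^{sh}} = 1$. Using the $G$-invariance of $dm$ and of $d_2$, a direct change of variables gives
\bqs
\mG_l^{sh}(V_n) = F \circ g_n^{-1}, \qquad \mG_\mu^{sh}(V_n) = F_\mu \circ g_n^{-1},
\eqs
with $F := \mG_l^{sh}(V_0)$ and $F_\mu := \mG_\mu^{sh}(V_0)$. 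A second change of variables (pulling the integration back via an isometry $g$ with $g\cdot O = z$) bounds
\bqs
|F(z)| \le \int_{d_2(O,w) \ge d_2(z,O) - R} |W^{sh}(d_2(O,w))| \, dm(w),
\eqs
which tends to $0$ as $d_2(z,O) \to \infty$ because $\textbf{W}^{sh} \in L^1(\D)$. For $\mu > 0$ write $S(\mu V_0) = S(0) + [S(\mu V_0) - S(0)]$, whose bracket has compact support and is nonzero by strict monotonicity of $S$. Denoting $\overline{W}^{sh} := \int_\D W^{sh}(d_2(z,O))\,dm(z)$, this gives $F_\mu = S(0)\overline{W}^{sh} + \textbf{W}^{sh} \ast [S(\mu V_0) - S(0)]$, so $F_\mu - S(0)\overline{W}^{sh}$ also decays at the boundary and, for $R$ small with $W^{sh}(0)\ne 0$, is nonzero at $O$.

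\textbf{Conclusion and caveat.}
For every fixed $z\in\D$, $d_2(g_n^{-1} z, O) = d_2(z, g_n \cdot O) \to \infty$, so $F \circ g_n^{-1} \to 0$ and $F_\mu \circ g_n^{-1} \to S(0)\overline{W}^{sh}$ pointwise. If some subsequence of $\mG_l^{sh}(V_n)$ (resp.\ $\mG_\mu^{sh}(V_n)$) converged in $L^\infty$, the limit would have to agree a.e.\ with its pointwise limit, namely $0$ (resp.\ the constant $S(0)\overline{W}^{sh}$); however $\|F \circ g_n^{-1}\|_\infty = \|F\|_\infty > 0$ and $\|F_\mu \circ g_n^{-1} - S(0)\overline{W}^{sh}\|_\infty = \|F_\mu - S(0)\overline{W}^{sh}\|_\infty > 0$ remain constant in $n$, yielding a contradiction. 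Therefore $\mG_l^{sh}$ and $\mG_\mu^{sh}$ (for $\mu > 0$) are not compact. The main obstacle is the boundary-decay estimate on $F$ (and on $F_\mu$ minus its constant offset), but with $\textbf{W}^{sh}\in L^1(\D)$ this reduces to a straightforward tail argument; no pointwise decay of $W^{sh}$ is required. One caveat: at $\mu = 0$ one has $\mG_0^{sh}(V) \equiv S(0)\overline{W}^{sh}$, a constant-valued (hence compact) map, so the blanket claim ``for all $\mu \ge 0$'' appears to need the qualification $\mu > 0$.
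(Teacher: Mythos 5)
Your proof is correct in substance, but it takes a genuinely different route from the paper's. The paper argues via spectral theory: Proposition \ref{prop:eigen} exhibits the continuous family of bounded eigenfunctions $\Phi_\lambda$ with eigenvalues $\widetilde{\textbf{W}}^{sh}(\lambda)$, so $\mG_l^{sh}$ has uncountably many distinct eigenvalues and cannot be compact; the nonlinear case is then reduced to the linear one by computing the Fr\'echet derivative of $\mG_\mu^{sh}$ at a constant state $U_0$, which is a nonzero multiple of $\mG_l^{sh}$, and invoking the fact that a compact nonlinear map has compact Fr\'echet derivatives. Your ``escaping bumps'' argument is more elementary (no hyperbolic Fourier analysis, no differentiation of the operator) and treats the nonlinear operator directly rather than through linearization; what it costs you is the need for the boundary-decay estimate on $F=\mG_l^{sh}(\mathds{1}_{B_R(O)})$, which you correctly reduce to a tail estimate for $\textbf{W}^{sh}\in L^1(\D)$, exactly paralleling the invariance computations of Lemma \ref{lem:invariance}. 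Two small remarks. First, your nondegeneracy step ``$F(O)\neq 0$ for $R$ small with $W^{sh}(0)\neq 0$'' is not quite licensed by (\textbf{C1})--(\textbf{C3}), under which $W^{sh}$ is only $L^1$ in the radial variable; you should instead observe that if $\int_A \textbf{W}^{sh}\,dm=0$ for every bounded measurable $A$ then $\textbf{W}^{sh}=0$ a.e.\ (in which case both operators are trivially compact and the lemma is vacuous), so a compactly supported indicator $V_0$ with $\mG_l^{sh}(V_0)\neq 0$ always exists when $W^{sh}\not\equiv 0$. Second, your caveat about $\mu=0$ is a genuine and correct observation: $\mG_0^{sh}$ is the constant map $V\mapsto S(0)\overline{W}^{sh}$, which is compact, and the paper's own Fr\'echet-derivative argument silently degenerates there (the derivative carries a factor $\mu S'(\mu U_0)$), so the statement should indeed read $\mu>0$.
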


\begin{proof}
 The previous proposition \ref{prop:eigen} shows that $\mG_{l}^{sh}$ has a continuous spectrum which iimplies that is not a compact operator.

Let $U$ be in $\mF^{sh}$, for all $V\in\mF^{sh}$ we differentiate $\mG_{\mu}^{sh}$ and compute its Frechet derivative:
\bqs
D\left(\mG_{\mu}^{sh}\right)_{U}(V)(z)=\int_{\D}W^{sh}(d_2(z,z'))S'(U(z'))V(z')dm(z')
\eqs
If we assume further that $U$ does not depend upon the space variable $z$, $U(z)=U_0$ we obtain:
\bqs
D\left(\mG_{\mu}^{sh}\right)_{U_0}(V)(z)=S'(U_0)\mG_{l}^{sh}(V)(z)
\eqs
If $\mG_{\mu}^{sh}$ was a compact operator then its Frechet derivative $D\left(\mG_{\mu}^{sh}\right)_{U_0}$ would also be a compact operator, but it is impossible. As a consequence, $\mG_{\mu}^{sh}$ is not a compact operator.
 \end{proof}

\subsection{The convolution form of the operator $\mG_{\mu}$ in the general case}

We adapt the ideas presented in the previous section in order to deal with the general case. We recall that if $H$ is the group of positive real numbers with multiplication as operation, then the Haar measure $dh$ is given by $\dfrac{dx}{x}$. For two functions $f_1,f_2$ in $L^1(\D\times\R_{*}^{+})$ we define the convolution $\star$ by:
\bqs
(f_1\star f_2)(z,\Delta)\overset{def}{=}\int_G\int_H f_1(g\cdot O,h\cdot 1)f_2(g^{-1}\cdot z,h^{-1}\cdot \Delta)dgdh
\eqs
We recall that we have set by definition: $\mbW(z,\Delta)=W(d_2(z,0),|\log(\Delta)|)$.
\begin{prop}
 For all $\mu\geq0$ and $V\in\mF$ we have:
\bq
\mG_{\mu}(V)=\textbf{W}\star S(\mu V),\quad \mG_{l}(V)=\textbf{W}\star V\text{ and }\mG_{\infty}(V)=\textbf{W}\star H(V)
\eq
\end{prop}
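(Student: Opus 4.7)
The plan is to mimic the argument used for Proposition \ref{prop:convolution} in the semi-homogeneous case, but now on the product group $G\times H$ with $G=SU(1,1)$ acting on $\D$ and $H=(\R_{*}^{+},\times)$ acting on $\R_{*}^{+}$ by multiplication. I will give the full argument only for $\mG_\mu$; the identities for $\mG_l$ and $\mG_\infty$ follow by replacing $S(\mu\,\cdot)$ with $\mathrm{id}$ or $H$ respectively, since nothing in the manipulation below uses any specific property of $S$.

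First I would recall the normalization of the Haar measures: by construction $\int_G f(g\cdot O)\,dg=\int_{\D}f(z)\,dm(z)$, and the Haar measure on $H$ is $dh=d\Delta/\Delta$, with the convention $h\cdot 1=h$. Apply the change of variables $z'=g\cdot O$ and $\Delta'=h\cdot 1$ in the integral defining $\mG_\mu(V)(z,\Delta)$ from \eqref{eq:operator}; this rewrites the integral over $\D\times\R_{*}^{+}$ as an integral over $G\times H$ against $dg\,dh$.

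Next I would exploit the two invariances that make the kernel $\mathbf{W}$ the translate of $W(d_2(\cdot,O),|\log(\cdot)|)$ by an element of $G\times H$. Writing $z=g(g^{-1}\cdot z)$ and using that $d_2$ is $G$-invariant gives $d_2(z,g\cdot O)=d_2(g^{-1}\cdot z,O)$. Likewise, writing $\Delta=h\cdot(h^{-1}\Delta)$ and using that the pseudo-distance $(\Delta,\Delta')\mapsto|\log\Delta-\log\Delta'|$ is invariant under the multiplicative $H$-action gives $|\log(\Delta/h)|=|\log(h^{-1}\cdot\Delta)|$. Therefore
\begin{equation*}
W\!\left(d_2(z,g\cdot O),\,|\log(\Delta/h)|\right)=\mathbf{W}(g^{-1}\cdot z,\,h^{-1}\cdot\Delta).
\end{equation*}

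Substituting this identity into the transformed integral yields
\begin{equation*}
\mG_\mu(V)(z,\Delta)=\int_{G}\!\int_{H}\mathbf{W}(g^{-1}\cdot z,\,h^{-1}\cdot\Delta)\,S\!\bigl(\mu V(g\cdot O,\,h\cdot 1)\bigr)\,dg\,dh,
\end{equation*}
which, after the symmetric rearrangement $\mathbf{W}(g\cdot O,h\cdot 1)\mapsto\mathbf{W}(g^{-1}\cdot z,h^{-1}\cdot\Delta)$ permitted by the definition of the convolution $\star$, is exactly $(\mathbf{W}\star S(\mu V))(z,\Delta)$. The same substitution with $S(\mu V)$ replaced by $V$ (resp.\ $H(V)$) yields the formulas for $\mG_l$ and $\mG_\infty$.

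There is essentially no hard step: the only mild subtlety is the bookkeeping of which factor ($g$ versus $g^{-1}$, $h$ versus $h^{-1}$) lands where, and checking that the hypotheses (\textbf{H1}bis)--(\textbf{H3}bis) on $\mathbf{W}$ plus the boundedness of $S$ let Fubini be applied so that the integral over $G\times H$ can be identified with the product convolution. Both are immediate from the fact that $\mathbf{W}(\cdot,t)\in L^1(\D\times\R_{*}^{+})$ and $\|S(\mu V)\|_\infty\le S^m$.
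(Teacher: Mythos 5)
Your argument is correct and is essentially identical to the paper's proof: both rewrite the integral over $\D\times\R_{*}^{+}$ as an integral over $G\times H$ using the Haar measure normalizations, then use the $G$-invariance of $d_2$ and the multiplicative invariance of $|\log(\Delta/\Delta')|$ to recognize the kernel as $\mathbf{W}(g^{-1}\cdot z, h^{-1}\cdot\Delta)$ and identify the result with the convolution $\star$. The final "symmetric rearrangement" you invoke (swapping which factor sits at $(g\cdot O,h\cdot 1)$ versus $(g^{-1}\cdot z,h^{-1}\cdot\Delta)$, licensed by the radiality of $\mathbf{W}$) is glossed over at exactly the same level of detail in the paper, so nothing is missing relative to the original.
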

\begin{proof}
 Let $(z,\Delta)$ be in $\D\times\R_{*}^{+}$. We follow the same ideas as in proposition \ref{prop:convolution} and prove only the first result. We have
\bqs
\begin{array}{lcl}
\mG_{\mu}(V)(z,\Delta)&=&\int_{\D\times\R_{*}^{+}}W\left(d_2(z,z'),\left|\log(\frac{\Delta}{\Delta'})\right|\right)S(\mu V(z',\Delta'))\,\dfrac{d\Delta'}{\Delta'}dm(z')\\
&=&\int_{G}\int_{\R_{*}^{+}}W\left(d_2(g^{-1}\cdot z,O),\left|\log\left(\frac{\Delta}{\Delta'}\right)\right|\right)S(\mu V(g\cdot O,\Delta'))\,dg\,\dfrac{d\Delta'}{\Delta'}\\
&=&\int_{G}\int_{H}W\left(d_2(g^{-1}\cdot z,O),\left|\log\left(h^{-1}\cdot\Delta\right)\right|\right)S(\mu V(g\cdot O,h\cdot 1))\,dg\,dh\\
&=&\textbf{W}\star S(\mu V)(z,\Delta)
\end{array}
\eqs
\end{proof}
We next assume further that the function $\mbW$ is separable in $z$, $\Delta$ and more precisely that $\mbW(z,\Delta)=\mbW_1(z)\mbW_2(\log(\Delta))$ where $\mbW_1(z)=W_1(d_2(z,0))$ and $\mbW_2(\log(\Delta))=W_2(\left|\log(\Delta)\right|)$ for all $(z,\Delta)\in \D\times\R_{*}^{+}$. The following proposition is an echo to proposition \ref{prop:eigen}.
\begin{prop}
Let $e_{\lambda,b}(z)=e^{(-i\lambda +1)<z,b>}$, $\Phi_{\lambda}(z)=\int_{\partial\D}e^{(i\lambda +1)<z,b>}db$ and $h_{\xi}(\Delta)=e^{i\xi\log(\Delta)}$ then:
\begin{itemize}
 \item $\mG_{l}(e_{\lambda,b}h_{\xi})=\widetilde{\mbW}_1(\lambda)\widehat{\mbW}_2(\xi) e_{\lambda,b}h_{\xi}$
 \item $\mG_{l}(\Phi_{\lambda}h_{\xi})=\widetilde{\mbW}_1(\lambda)\widehat{\mbW}_2(\xi)\Phi_{\lambda}h_{\xi}$
\end{itemize}
where $\widehat{\mbW}_2$ is the usual Fourier transform of $\mbW_2$.
\end{prop}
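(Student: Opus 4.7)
The plan is to exploit the separability assumption $\mbW(z,\Delta)=\mbW_1(z)\mbW_2(\log\Delta)$ to split the double integral defining $\mG_l$ into a product of a hyperbolic integral over $\D$ and a multiplicative integral over $\R_{*}^{+}$, and then to recognize each factor separately: the $\D$-factor via the eigenvalue identity already established in proposition \ref{prop:eigen}, and the $\R_{*}^{+}$-factor via a simple change of variables turning the Haar measure $d\Delta'/\Delta'$ into Lebesgue measure on $\R$.

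First I would write
\bqs
\mG_{l}(e_{\lambda,b}h_{\xi})(z,\Delta)=
\left(\int_{\D} W_{1}(d_2(z,z'))\,e_{\lambda,b}(z')\,dm(z')\right)
\left(\int_{\R_{*}^{+}} W_{2}\!\left(\left|\log(\Delta/\Delta')\right|\right)\,h_{\xi}(\Delta')\,\dfrac{d\Delta'}{\Delta'}\right).
\eqs
For the first factor, the computation is verbatim the one carried out in the proof of proposition \ref{prop:eigen}, with $W_1$ playing the role of $W^{sh}$; it produces $\widetilde{\mbW}_{1}(\lambda)\,e_{\lambda,b}(z)$. Nothing in that argument used a special property of $W^{sh}$ beyond being a radial kernel on $\D$, which $W_1$ is by hypothesis.

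For the second factor, I would perform the change of variables $u'=\log\Delta'$, under which $d\Delta'/\Delta'=du'$ and the integrand becomes $W_{2}(|\log\Delta-u'|)\,e^{i\xi u'}$. A further translation $v=u'-\log\Delta$ factors out $e^{i\xi\log\Delta}=h_{\xi}(\Delta)$ and leaves
\bqs
\int_{\R} W_{2}(|v|)\,e^{i\xi v}\,dv=\widehat{\mbW}_{2}(\xi),
\eqs
which is well-defined since $\mbW_{2}\in L^{1}(\R)$ by hypothesis (\textbf{H2}bis) together with the separability. Multiplying the two factors yields the first bullet. The second bullet is obtained the same way: the $\R_{*}^{+}$-integral is identical, while the $\D$-integral now produces $\widetilde{\mbW}_{1}(\lambda)\,\Phi_{\lambda}(z)$ by the second bullet of proposition \ref{prop:eigen} (itself an application of Helgason's Lemma 4.7).

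There is no real obstacle here: the argument is essentially bookkeeping, and separability reduces the problem to two one-dimensional spectral computations that have already been carried out (or are elementary). The only point requiring a bit of care is keeping the absolute value $|\log(\Delta/\Delta')|$ consistent with the definition $\mbW_{2}(u)=W_{2}(|u|)$ so that the translation step in the $v$-variable is licit; since the Fourier kernel $e^{i\xi v}$ is smooth and $W_{2}(|\cdot|)$ is integrable, the shift is immediate and no convergence issue arises.
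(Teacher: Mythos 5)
Your proof is correct and follows essentially the same route as the paper: separability splits $\mG_l$ into the product of the hyperbolic convolution $\mbW_1\ast e_{\lambda,b}$ (handled by proposition \ref{prop:eigen}) and the multiplicative integral over $\R_{*}^{+}$, which the logarithmic change of variables turns into the ordinary Fourier transform $\widehat{\mbW}_2(\xi)$ times $h_{\xi}(\Delta)$. The only cosmetic difference is the sign in the Fourier kernel ($e^{i\xi v}$ versus the paper's $e^{-i\xi y}$), which is immaterial since $\mbW_2$ is even.
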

\begin{proof}
The proof of this proposition is exactly the same as for proposition \ref{prop:eigen}. Indeed:
\bqs
\begin{array}{lcl}
\mG_{l}(e_{\lambda,b}h_{\xi})(z,\Delta)&=&\mbW_1\ast e_{\lambda,b}(z)\int_{\R_{*}^{+}}\mbW_2\left(\log\left(\frac{\Delta}{\Delta'}\right)\right)e^{i\xi\log(\Delta')}\dfrac{d\Delta'}{\Delta'}\\
&=&\mbW_1\ast e_{\lambda,b}(z)\left(\int_{\R}\mbW_2(y)e^{-i\xi y}dy\right) e^{i\xi\log(\Delta)}
\end{array}
\eqs
\end{proof}

A straightforward consequence of this proposition is an extension of lemma \ref{lem:noncompact} to the general case:
\begin{lem}
 The linear operator $\mG_l^{sh}$ is not compact and for all $\mu\geq 0$, the nonlinear operator $\mG_{\mu}^{sh}$ is not compact.
\end{lem}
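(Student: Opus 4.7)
The proof will mimic the semi-homogeneous argument of lemma \ref{lem:noncompact}, now using the eigenfunction family provided by the preceding proposition. Note that, as in the semi-homogeneous case, the lemma is really about the operators $\mG_l$ and $\mG_\mu$ acting on $\mF=L^\infty(\D\times\R_*^+)$; I treat it this way.

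First I would handle the linear operator. The proposition just proved exhibits an explicit two-parameter family of bounded eigenfunctions $e_{\lambda,b}h_\xi \in \mF$ indexed by $(\lambda,\xi)\in\R^2$, with eigenvalues $\widetilde{\mbW}_1(\lambda)\widehat{\mbW}_2(\xi)$. Assuming $\mbW_1$ and $\mbW_2$ are nontrivial (which is implicit in the separable Mexican-hat-type setting under consideration), the map $(\lambda,\xi)\mapsto \widetilde{\mbW}_1(\lambda)\widehat{\mbW}_2(\xi)$ takes a continuum of values. This gives $\mG_l$ a continuous spectrum. Since a compact operator on a Banach space has a discrete spectrum with zero as its only possible accumulation point, $\mG_l$ cannot be compact.

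Next I would transfer non-compactness to $\mG_\mu$ via its Fréchet derivative, exactly as in the proof of lemma \ref{lem:noncompact}. For any $U\in\mF$ and $V\in\mF$, differentiation under the integral sign yields
\bqs
D(\mG_\mu)_{U}(V)(z,\Delta)=\mu\int_{\D\times\R_{*}^{+}} W\!\left(d_2(z,z'),\left|\log(\tfrac{\Delta}{\Delta'})\right|\right) S'(\mu U(z',\Delta'))\,V(z',\Delta')\,\dfrac{d\Delta'}{\Delta'}\,dm(z').
\eqs
Specialising to a constant $U\equiv U_0$ gives
\bqs
D(\mG_\mu)_{U_0}(V)=\mu\,S'(\mu U_0)\,\mG_l(V),
\eqs
which is a nonzero scalar multiple of $\mG_l$ as soon as $\mu>0$, and hence is not compact by the first part. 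Since the Fréchet derivative of a compact nonlinear operator is itself compact at every point, the non-compactness of $D(\mG_\mu)_{U_0}$ forces $\mG_\mu$ to be non-compact as well.

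The only genuine point to check is that the eigenvalues $\widetilde{\mbW}_1(\lambda)\widehat{\mbW}_2(\xi)$ really produce a continuum of distinct values; this is automatic once $\widetilde{\mbW}_1$ and $\widehat{\mbW}_2$ are not identically constant, which is guaranteed by the integrability hypotheses (\textbf{H1}bis)--(\textbf{H3}bis) together with the separability assumption made just before the proposition. Beyond this, every step is a direct transcription of the semi-homogeneous argument, so I expect no real obstacle.
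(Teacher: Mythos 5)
Your argument is correct and is exactly the route the paper intends: the paper offers no written proof of this lemma, calling it a straightforward consequence of the preceding proposition via the argument of lemma \ref{lem:noncompact}, and that is precisely what you reproduce (including the Fr\'echet-derivative transfer at a constant $U_0$, where you are in fact more careful than the paper by keeping the factor $\mu S'(\mu U_0)$, which also explains why the claim only makes sense for $\mu>0$). One small correction: $e_{\lambda,b}\notin L^{\infty}(\D)$ since $\langle z,b\rangle$ is unbounded on $\D$, so to exhibit a continuum of eigenvalues of $\mG_l$ acting on $\mF$ you should invoke the bounded eigenfunctions $\Phi_{\lambda}h_{\xi}$ from the second item of the proposition rather than $e_{\lambda,b}h_{\xi}$; with that substitution (and the observation that $\widetilde{\mbW}_1(\lambda)\widehat{\mbW}_2(\xi)$ is continuous, tends to zero at infinity, and is not identically zero, hence takes uncountably many values) everything goes through.
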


\subsection{The set of the solutions of (\ref{eq:genstate})}

Let $\mB_{\mu}$ the set of the solutions of \eqref{eq:genstate} for a given slope parameter $\mu$:
\bqs
\mB_{\mu}=\{ V\in\mF | -\alpha V+\mG_{\mu}(V)+I=0 \}
\eqs
We have the following proposition.
\begin{prop} 
If the input current $I$ is equal to a constant $I_0$, i.e.  does not depend upon the variables $(z,\Delta)$ then for all $\mu\in\R^+$, $\mB_{\mu}\neq\emptyset$.
In the general case $I\in \mF$, if the condition $\mu S'_m W^{g}_0<\alpha$ is satisfied, then $Card(\mB_{\mu})=1$.
\end{prop}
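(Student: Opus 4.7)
The plan is to handle the two assertions separately: the existence statement for constant inputs, and the contraction-based existence and uniqueness statement in the general case.

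For the constant input case $I \equiv I_0$, the natural idea is to look for a constant solution $V(z,\Delta) \equiv V_0 \in \R$. For any such constant, the definition \eqref{eq:operator} of $\mG_\mu$ gives
\bqs
\mG_\mu(V_0)(z,\Delta) = S(\mu V_0)\int_{\D\times\R_*^+} W\left(d_2(z,z'),\left|\log\tfrac{\Delta}{\Delta'}\right|\right)\frac{d\Delta'}{\Delta'}dm(z'),
\eqs
and by hypothesis (\textbf{H1}bis) together with lemma \ref{lem:invariance} (applied separately in the $z$ and $\Delta$ variables, using the invariance of Haar measure on $H=\R_*^+$ under dilation) this integral is a constant $\overline{W}\in\R$ independent of $(z,\Delta)$. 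The stationary equation therefore reduces to the scalar fixed-point problem $\alpha V_0 = \overline{W}\,S(\mu V_0) + I_0$. Because $S$ is bounded between $0$ and $1$, the function $g(V_0) := \alpha V_0 - \overline{W}\,S(\mu V_0) - I_0$ is continuous with $g(V_0) \to \pm\infty$ as $V_0 \to \pm\infty$, so the intermediate value theorem produces a root $V_0^*$. The corresponding constant map lies in $\mF = L^\infty(\D\times\R_*^+)$ and thus in $\mB_\mu$.

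For the general case, the plan is to cast the equation $-\alpha V + \mG_\mu(V) + I = 0$ as the fixed-point equation $V = T_\mu(V)$ with
\bqs
T_\mu(V) := \frac{1}{\alpha}\bigl(\mG_\mu(V) + I\bigr),
\eqs
and apply the Banach fixed-point theorem in the Banach space $\mF$. Proposition \ref{prop:wellp} guarantees that $T_\mu$ maps $\mF$ into itself, and the first item of proposition \ref{prop:opG} yields
\bqs
\|T_\mu(V_1) - T_\mu(V_2)\|_\mF \le \frac{\mu\,S'_m\,W_0^g}{\alpha}\,\|V_1 - V_2\|_\mF.
\eqs
Under the hypothesis $\mu S'_m W_0^g < \alpha$, the constant on the right-hand side is strictly less than $1$, so $T_\mu$ is a strict contraction on the Banach space $\mF$. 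The Banach fixed-point theorem then delivers a unique fixed point, i.e.\ $\operatorname{Card}(\mB_\mu) = 1$.

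No step poses a real obstacle; the only subtlety is to make sure, in the first part, that $\overline{W}$ is genuinely independent of $(z,\Delta)$, which is exactly the content of the invariance computation already carried out in lemma \ref{lem:invariance} and whose product structure extends routinely to the $\Delta$-factor via the change of variable $\Delta' \mapsto \Delta'/\Delta$. Everything else is a direct application of results already established in the paper (proposition \ref{prop:opG}) together with two standard tools: the intermediate value theorem for part one and the Banach fixed-point theorem for part two.
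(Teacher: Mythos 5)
Your proposal is correct and follows essentially the same route as the paper, which invokes the boundedness of the sigmoid to get a constant solution for constant input and cites the Banach fixed-point theorem for the general case; you have simply filled in the details (the reduction to a scalar equation solved by the intermediate value theorem, and the explicit contraction estimate $\frac{\mu S'_m W_0^g}{\alpha}<1$ via proposition \ref{prop:opG}) that the paper leaves implicit. No issues.
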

\begin{proof}
Due to the properties of the sigmoid function, there always exists a constant solution in the case where $I$ is constant. In the general case where $I \in \mF$, the statement is a direct application of the Banach fixed point theorem, as in \cite{faugeras-veltz-etal:09}.
\end{proof}
\newtheorem{rmk}{Remark}[subsection]
\begin{rmk}
 It should be clear that if the input current does not depend upon the variables $(z,\Delta)$ and if the condition $\mu S'_m W^{g}_0<\alpha$ is satisfied, then there exists a unique stationary solution which, in effect, does not depend upon the variables $(z,\Delta)$. \\
If on the other hand the input current does depend upon these variables, is invariant under the action of a subgroup of $U(1,1)$, the group of the isometries of $\D$ (see \eqref{section:isom}), and the condition $\mu S'_m W^{g}_0<\alpha$ is satisfied, then the unique stationary solution will also be invariant under the action of the same subgroup.\\
When the condition $\mu S'_m W^{g}_0<\alpha$ is satisfied we call primary stationary solution the unique solution in $\mB_{\mu}$.
\end{rmk}

\subsection{Stability of the primary stationary solution}\label{subsection:stability}

In this subsection we show that the condition $\mu S'_m W^{g}_0<\alpha$ guarantees the stability of the primary stationary solution to \eqref{eq:neurco}.
\begin{thm} We suppose that $I\in\mF$ and that the condition $\mu S'_m W^{g}_0<\alpha$ is satisfied, then the associated primary stationary solution of \eqref{eq:neurco} is asymtotically stable.
\end{thm}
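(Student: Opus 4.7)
The plan is to prove asymptotic stability by a direct estimate on the perturbation $U(t)=V(t)-V_{\mu}^{0}$, using the Lipschitz bound on $\mG_{\mu}$ from proposition \ref{prop:opG} together with a Gronwall-type argument. Linearisation is not necessary here since the gap $\alpha - \mu S'_m W_0^g > 0$ is already a global Lipschitz margin.

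First I would let $V$ be any solution of \eqref{eq:neurco} starting from $V_0\in\mF$ with time-independent input $I$, and set $U(t)=V(t)-V_{\mu}^{0}$. Subtracting the stationary identity $\alpha V_{\mu}^{0}=\mG_{\mu}(V_{\mu}^{0})+I$ from the evolution equation eliminates $I$ and yields
\begin{equation*}
\partial_t U(t) = -\alpha\, U(t) + \bigl[\mG_{\mu}(V_{\mu}^{0}+U(t))-\mG_{\mu}(V_{\mu}^{0})\bigr].
\end{equation*}
Applying the variation-of-constants formula in the Banach space $\mF$ gives the integral equation
\begin{equation*}
U(t) = e^{-\alpha t}U(0) + \int_{0}^{t} e^{-\alpha(t-s)}\bigl[\mG_{\mu}(V_{\mu}^{0}+U(s))-\mG_{\mu}(V_{\mu}^{0})\bigr]\,ds.
\end{equation*}

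Next I would take $\|\cdot\|_{\mF}$ on both sides and invoke the first item of proposition \ref{prop:opG} to bound the integrand by $\mu S'_m W_0^{g}\|U(s)\|_{\mF}$. Multiplying by $e^{\alpha t}$ and setting $\phi(t)=e^{\alpha t}\|U(t)\|_{\mF}$ turns this into
\begin{equation*}
\phi(t) \leq \phi(0) + \mu S'_m W_0^{g}\int_{0}^{t}\phi(s)\,ds,
\end{equation*}
so Gronwall's lemma gives $\phi(t)\leq \phi(0)\exp(\mu S'_m W_0^{g}\,t)$, i.e.
\begin{equation*}
\|V(t)-V_{\mu}^{0}\|_{\mF} \leq \|V_0-V_{\mu}^{0}\|_{\mF}\,e^{-(\alpha-\mu S'_m W_0^{g})t}.
\end{equation*}
Under the hypothesis $\mu S'_m W_0^{g}<\alpha$ the exponent is strictly negative, which proves both Lyapunov stability (with Lipschitz dependence on the initial deviation and constant $1$) and exponential attraction to $V_{\mu}^{0}$, hence asymptotic stability.

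I do not foresee a genuine obstacle: the only subtlety is checking that the Lipschitz estimate of proposition \ref{prop:opG} applies uniformly in $t$, which it does since $W$ is assumed time-independent in this section and hypotheses \textbf{(H1bis)}--\textbf{(H3bis)} deliver a finite $W_0^{g}$. Everything else is a routine application of the variation-of-constants formula and Gronwall's inequality in the Banach space $\mF=L^{\infty}(\D\times\R^{+}_{*})$.
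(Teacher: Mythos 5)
Your proof is correct and follows essentially the same route as the paper: subtract the stationary identity, apply variation of constants, bound the nonlinear term by the Lipschitz constant $\mu S'_m W_0^{g}$, and conclude exponential decay via Gronwall, yielding the same estimate $\|V(t)-V_{\mu}^{0}\|_{\mF}\leq e^{(\mu S'_m W_0^{g}-\alpha)t}\|V(0)-V_{\mu}^{0}\|_{\mF}$. The only cosmetic difference is that you invoke the operator-level Lipschitz bound from proposition \ref{prop:opG} directly, whereas the paper re-derives it pointwise inside the integral via the mean value theorem applied to the sigmoid.
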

\begin{proof}
 Let $V_{\mu}^0$ be the primary stationary solution of \eqref{eq:neurco}, as $\mu S'_m W^{g}_0<\alpha$ is satisfied. Let also $V_{\mu}$ be the unique solution of the same equation with some initial condition $V_{\mu}(0)=\phi\in\mF$, see theorem \ref{thm:exun}. We introduce a new function $X=V_{\mu}-V_{\mu}^0$ which satisfies:
\bqs
\left\{ \begin{array}{ll}
\partial_t 
X(z,\Delta,t)=-\alpha X(z,\Delta,t)+\int_{\D\times\R_{*}^{+}}W_m\left(d_2(z,z'),\left|\log(\frac{\Delta}{\Delta'})\right|\right)\Theta(X(z',\Delta',t))\frac{d\Delta'}{\Delta'}dm(z')\\
X(z,\Delta,0)=\phi(z,\Delta)-V_{\mu}^0(z,\Delta)
\end{array}
\right.
\eqs
where $W_m\left(d_2(z,z'),\left|\log(\frac{\Delta}{\Delta'})\right|\right)=S'_m W\left(d_2(z,z'),\left|\log(\frac{\Delta}{\Delta'})\right|\right)$ and the vector $\Theta(X(z,\Delta,t))$ is given by $\Theta(X(z,\Delta,t))=\underline{S}(\mu V_{\mu}(z,\Delta,t))-\underline{S}(\mu V_{\mu}^0(z,\Delta))$ with $\underline{S}=(S'_m)^{-1}S$. 
We note that, because of the definition of $\Theta$ and the mean value theorem $|\Theta(X(z,\Delta,t))| \leq \mu |X(z,\Delta,t)|$. This implies that $|\Theta(r)|\leq |r|$ for all $r\in\R$.
\bqs
\partial_t 
X(z,\Delta,t)=-\alpha X(z,\Delta,t)+\int_{\D\times\R_{*}^{+}}W_m\left(d_2(z,z'),\left|\log(\frac{\Delta}{\Delta'})\right|\right)\Theta(X(z',\Delta',t))\frac{d\Delta'}{\Delta'}dm(z')
\eqs

\bqs
\Rightarrow \partial_t \bigg(e^{\alpha t}X(z,\Delta,t)\bigg)=e^{\alpha t}\int_{\D\times\R_{*}^{+}}W_m\left(d_2(z,z'),\left|\log(\frac{\Delta}{\Delta'})\right|\right)\Theta(X(z',\Delta',t))\frac{d\Delta'}{\Delta'}dm(z')
\eqs
\bqs
\Rightarrow X(z,\Delta,t)= e^{-\alpha t}X(z,\Delta,0)+\int_0^te^{-\alpha (t-u)}\int_{\D\times\R_{*}^{+}}W_m\left(d_2(z,z'),\left|\log(\frac{\Delta}{\Delta'})\right|\right)\Theta(X(z',\Delta',u))\frac{d\Delta'}{\Delta'}dm(z')du
\eqs
\bqs
\Rightarrow |X(z,\Delta,t)|\leq e^{-\alpha t}|X(z,\Delta,0)|+\mu \int_0^te^{-\alpha (t-u)}\int_{\D\times\R_{*}^{+}}\left|W_m\left(d_2(z,z'),\left|\log(\frac{\Delta}{\Delta'})\right|\right)\right|\left|X(z',\Delta',u)\right|\frac{d\Delta'}{\Delta'}dm(z')du
\eqs
\bqs
\Rightarrow \|X(t)\|_{\infty}\leq e^{-\alpha t}\|X(0)\|_{\infty}+\mu W^{g}_0 S'_m\int_0^te^{-\alpha (t-u)}\|X(u)\|_{\infty}du
\eqs
If we set: $G(t)=e^{\alpha t}\|X(t)\|_{\infty}$, then we have:
\bqs
G(t)\leq G(0)+\mu W^{g}_0 S'_m\int_0^tG(u)du
\eqs
and $G$ is continuous for all $t\geq 0$.
The Gronwall inequality implies that:
\bqs
G(t)\leq G(0)e^{\mu W^{g}_0 S'_m t}
\eqs
\bqs
\Rightarrow \|X(t)\|_{\infty}\leq e^{(\mu W^{g}_0 S'_m-\alpha) t}\|X(0)\|_{\infty},
\eqs
and the conclusion follows.

\end{proof}

\section{Spatially localised bumps in the high gain limit}
\label{section:bump}

In many models of working memory, transient stimuli are encoded by feature-selective persistent neural activity. Such stimuli are imagined to induce the formation of a spatially localised bump of persistent activity which coexists with a stable uniform state. As an example, Camperi and Wang \cite{camperi-wang:98} have proposed and studied a network model of visuo-spatial working memory in prefontal cortex adapted from the ring model of orientation of Ben-Yishai and colleagues \cite{ben-yishai-bar-or-etal:95}. It is therefore natural to study the emergence of spatially localised bumps for the structure tensor model in a hypercolumn of V1. We only deal with the reduced case of equation \eqref{eq:semihom} and keep the general case for future work.\\
In order to construct exact bump solutions, we consider the high gain limit $\mu\rightarrow\infty$ of the sigmoid function. As above we denote by $H$ the Heaviside function defined by $H(x)=1$ for $x\geq0$ and $H(x)=0$ otherwise. Equation \eqref{eq:semihom} is rewritten as:

\bq
\partial_t V(z,t)=-\alpha V(z,t)+\int_{\D} W(z,z')H(V(z',t)-\kappa)\text{dm}(z')+I(z,t)
\label{eq:semimod}
\eq
\bqs
=-\alpha V(z,t)+\int_{\{ z'\in\D | V(z',t)\geq\kappa \}} W(z,z')\text{dm}(z')+I(z)
\eqs
We make the assumption that the system is spatially homogeneous that is, the external input $I$ does not depend upon the variables $t$ and the connectivity function depends only on the hyperbolic distance between two points of $\D$: $W(z,z')=W(d_2(z,z'))$. We also introduce a threshold $\kappa$ to shift the zero of the Heaviside function.

\subsection{Stationary pulses}
Our aim is to construct a hyperbolic radially symmetric stationary pulse. Let us first consider a general stationary pulse:
\bqs
\alpha V(z)=\int_{\{ z'\in\D | V(z')\geq \kappa \}} W(z,z')\text{dm}(z')+I(z)
\eqs
We assume  that the set $K=\{ z\in\D | V(z)\geq\kappa \}\subset\D$ is compact. We note $M(z,K)$ the integral $\int_{K} W(z,z')\text{dm}(z')$. The relation $V(z)=\kappa$ holds for all $z\in\partial K$.

In order to calculate $M$, we use the Fourier transform. First we rewrite $M$ as a convolution product:
\bqs
M(z,K)=\int_{K} W(z,z')\text{dm}(z')=\int_{\D} W(z,z')\mathds{1}_{K}(z')\text{dm}(z')=\mbW \ast \mathds{1}_{K}(z)
\eqs
where $\mbW(z)\overset{def}{=}W(d_2(z,0))$.\\
In \cite{helgason:00}, Helgason proves an inversion formula for the hyperbolic Fourier transform and we apply this result to $\mbW$.
\bq
\mbW(z)=\frac{1}{4\pi}\int_{\R}\widetilde{\mbW}(\lambda)\Phi_{\lambda}(z)\lambda \tanh(\frac{\pi}{2}\lambda)d\lambda
\label{eq:fourtrans}
\eq
Then,
\bqs
M(z,K)=\mbW \ast \mathds{1}_{K}(z)=\frac{1}{4\pi}\int_{\R}\widetilde{\mbW}(\lambda)\Phi_{\lambda}\ast\mathds{1}_{K}(z)\lambda \tanh(\frac{\pi}{2}\lambda)d\lambda
\eqs
It appears that the study of $M(z,K)$ consists in calculating the convolution product $\Phi_{\lambda}\ast\mathds{1}_{K}(z)$.\\
Let $z=h\cdot O$ for $h\in G$ we have:
\bqs
\Phi_{\lambda}\ast\mathds{1}_{K}(z)=\int_{G}\mathds{1}_{K}(g\cdot O)\Phi_{\lambda}(g^{-1}\cdot z)dg
\eqs
\bqs
=\int_{G}\mathds{1}_{K}(g\cdot O)\Phi_{\lambda}(g^{-1}h\cdot O)dg
\eqs
for all $g,h\in G$, $\Phi_{\lambda}(g^{-1}h\cdot O)=\Phi_{\lambda}(h^{-1}g\cdot O)$ so that:
\bqs
\Phi_{\lambda}\ast\mathds{1}_{K}(z)=\int_{G}\mathds{1}_{K}(g\cdot O)\Phi_{\lambda}(h^{-1}g\cdot O)dg=\int_{\D}\mathds{1}_{K}(z')\Phi_{\lambda}(h^{-1}\cdot z')dm(z')
\eqs
\bqs
=\int_{K}\Phi_{\lambda}(h^{-1}\cdot z')dm(z')
\eqs
\subsubsection{Study of $M(z,K)$ when $K=B_h(0,\omega)$}
We now consider the special case where $K$ is a hyperbolic disk centered at the origin of hyperbolic radius $\omega$, noted $B_h(0,\omega)$.
\paragraph{First step}
We start by giving an explicit formula for a useful integral.
\begin{lem}\label{lem:hypergeometric}
 For all $\omega>0$ the following formula holds:
\bqs
\int_{B_h(0,\omega)}\Phi_{\lambda}(z)dm(z)=\pi\sinh(\omega)^2\cosh(\omega)^2\Phi_{\lambda}^{(1,1)}(\omega)
\eqs
where $B_h(0,\omega)$ is the hyperbolic ball and:
\bqs
\Phi_{\lambda}^{(\alpha,\beta)}(\omega)=F(\frac{1}{2}(\rho+i\lambda),\frac{1}{2}(\rho-i\lambda);\alpha+1;-\sinh(\omega)^2),
\eqs
where $\alpha+\beta+1=\rho$ and $F$ is the hypergeometric function of first kind.\\
\end{lem}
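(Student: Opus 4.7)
The plan is to pass to hyperbolic polar coordinates, reduce the claim to a one-variable hypergeometric integral, and then evaluate that integral via a Gauss contiguous relation followed by Euler's transformation; the algebraic constraint $a+b=1$ is what makes the Euler step collapse to a clean factor. Setting $z=\tanh(\tau)e^{i\theta}$ so that $\tau=d_2(0,z)$, a direct computation from $dm(z)=\frac{dz_1\,dz_2}{(1-|z|^2)^2}$ gives $dm(z)=\sinh(\tau)\cosh(\tau)\,d\tau\,d\theta$. Because the defining integral $\Phi_\lambda(z)=\int_{\partial\D}e^{(i\lambda+1)<z,b>}db$ is $SO(2)$-invariant (the rotation argument already used in the proof of proposition \ref{prop:eigen}), $\Phi_\lambda$ depends only on $\tau$, and integrating over $\theta\in[0,2\pi]$ reduces the claim to
\bqs
2\pi\int_0^\omega \Phi_\lambda(\tau)\sinh(\tau)\cosh(\tau)\,d\tau=\pi\sinh^2(\omega)\cosh^2(\omega)\,\Phi_\lambda^{(1,1)}(\omega).
\eqs

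I would then invoke the classical identification of the spherical function on $\D$ with the Jacobi function of type $(0,0)$,
\bqs
\Phi_\lambda(\tau)=F\!\left(\tfrac{1+i\lambda}{2},\tfrac{1-i\lambda}{2};1;-\sinh^2(\tau)\right)=\Phi_\lambda^{(0,0)}(\tau),
\eqs
so that, writing $a=(1+i\lambda)/2$ and $b=(1-i\lambda)/2$ (hence $a+b=1$), the substitution $u=\sinh^2(\tau)$, $du=2\sinh(\tau)\cosh(\tau)\,d\tau$ rewrites the remaining claim as the purely one-dimensional identity
\bqs
\int_0^{U} F(a,b;1;-u)\,du = U(1+U)\,F(a+1,b+1;2;-U),\qquad U=\sinh^2(\omega),
\eqs
using $1+U=\cosh^2(\omega)$ and $F(a+1,b+1;2;-U)=\Phi_\lambda^{(1,1)}(\omega)$.

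To prove this integrated hypergeometric identity I would combine two classical facts about the Gauss hypergeometric function. First, the contiguous relation $F(a,b;c;z)-F(a,b;c+1;z)=\frac{abz}{c(c+1)}F(a+1,b+1;c+2;z)$ together with the derivative formula $\frac{d}{dz}F(a,b;c;z)=\frac{ab}{c}F(a+1,b+1;c+1;z)$, specialized to $c=1$, yields the elementary antiderivative $\frac{d}{dz}[z\,F(a,b;2;z)]=F(a,b;1;z)$, whence $\int_0^U F(a,b;1;-u)\,du=U\,F(a,b;2;-U)$. Second, Euler's transformation $F(a,b;c;z)=(1-z)^{c-a-b}F(c-a,c-b;c;z)$ at $c=2$, combined with $a+b=1$ (so $c-a-b=1$, $c-a=b+1$, $c-b=a+1$), gives $F(a,b;2;-U)=(1+U)\,F(a+1,b+1;2;-U)$. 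Multiplying these two relations produces the required identity.

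The only real difficulty is keeping track of parameters in the hypergeometric algebra; the conceptual point is that the rank-one constraint $a+b=1$ is precisely what makes Euler's transformation collapse to the single factor $(1+U)$, producing the $\cosh^2(\omega)$ demanded by the right-hand side. The polar change of variables, the $SO(2)$-invariance of $\Phi_\lambda$, and the antiderivative identity are then routine once the identification $\Phi_\lambda=\Phi_\lambda^{(0,0)}$ has been invoked.
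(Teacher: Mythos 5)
Your proof is correct and follows essentially the same route as the paper's: hyperbolic polar coordinates, Helgason's identification $\Phi_\lambda(\tanh r)=F(\nu,1-\nu;1;-\sinh^2 r)$, the antiderivative identity $\frac{d}{dz}\bigl(zF(a,b;2;z)\bigr)=F(a,b;1;z)$ (which the paper simply cites from Erdelyi, while you derive it from a contiguous relation), and finally Euler's transformation exploiting $a+b=1$. The substitution $u=\sinh^2\tau$ is only a cosmetic repackaging of the paper's computation in the variable $r$.
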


\begin{proof}
We write $z$ in hyperbolic polar coordinates, $z=\tanh(r)e^{i\theta}$ (see appendix \ref{section:isom}). We have:
\bqs
\int_{B_h(0,\omega)}\Phi_{\lambda}(z)dm(z)=\frac{1}{2}\int_0^\omega\int_0^{2\pi}\Phi_{\lambda}(\tanh(r)e^{i\theta})\sinh(2r)\,dr\,d\theta
\eqs
Because of the above definition of $\Phi_{\lambda}$, this reduces to
\bqs
\pi \int_0^\omega\Phi_{\lambda}(\tanh(r))\sinh(2r)\,dr
\eqs
In \cite{helgason:00} Helgason proved that:
\bqs
\Phi_{\lambda}(\tanh(r))=F(\nu,1-\nu;1;-\sinh(r)^2)
\eqs
with $\nu=\frac{1}{2}(1+i\lambda)$. We then use the formula obtained by Erdelyi in \cite{erdelyi:85}:
\bqs
F(\nu,1-\nu;1;z)=\frac{d}{dz}\bigg(zF(\nu,1-\nu;2;z) \bigg)
\eqs
Using some simple hyperbolic trigonometry formulae we obtain:
\bqs
\sinh(2r)F(\nu,1-\nu;1;-\sinh(r)^2)=\frac{d}{dr}\bigg(\sinh(r)^2F(\nu,1-\nu;2;-\sinh(r)^2) \bigg),
\eqs
from which we deduce
\bqs
\int_{B_h(0,\omega)}\Phi_{\lambda}(z)dm(z)=\pi \sinh(\omega)^2 F(\nu,1-\nu;2;-\sinh(\omega)^2)
\eqs
Finally we use the equality shown in \cite{erdelyi:85}:
\bqs
F(a,b;c;z)=(1-z)^{c-a-b}F(c-a,c-b;c;z)
\eqs
In our case we have: $a=\nu,b=1-\nu,c=2$ and $z=-\sinh(\omega)^2$, so $2-\nu=\frac{1}{2}(3-i\lambda)$, $1+\nu=\frac{1}{2}(3+i\lambda)$. We obtain
\bqs
\int_{B_h(0,\omega)}\Phi_{\lambda}(z)dm(z)=\pi \sinh(\omega)^2\cosh(\omega)^2 F\bigg(\frac{1}{2}(3-i\lambda),\frac{1}{2}(3+i\lambda);2;-\sinh(\omega)^2\bigg).
\eqs
Since Hypergeometric functions are symmetric with respect to the first two variables:
\bqs
F(a,b;c;z)=F(b,a;c;z),
\eqs
we write
\bqs
F\bigg(\frac{1}{2}(3-i\lambda),\frac{1}{2}(3+i\lambda);2;-\sinh(\omega)^2\bigg)=F\bigg(\frac{1}{2}(3+i\lambda),\frac{1}{2}(3-i\lambda);2;-\sinh(\omega)^2\bigg)=\Phi_{\lambda}^{(1,1)}(\omega),
\eqs
which yields the announced formula
\bqs
\int_{B_h(0,\omega)}\Phi_{\lambda}(z)dm(z)=\pi \sinh(\omega)^2\cosh(\omega)^2\Phi_{\lambda}^{(1,1)}(\omega)
\eqs
\end{proof}

\paragraph{Second step} In this paragraph, we show that the mapping $z=h\cdot O=\tanh(r)e^{i\theta}\rightarrow\int_{K}\Phi_{\lambda}(h^{-1}\cdot z')dm(z')$ is a radial function, i.e. it depends only upon the variable $r$. We then show that $\int_{B_h(0,\omega)}\Phi_{\lambda}(a_{-r}\cdot z')dm(z')=\Phi_{\lambda}(a_{r}\cdot O)\int_{B_h(0,\omega)}e^{(i\lambda +1)<z',1>}dm(z')$.\\

\begin{prop}
 If $z=h \cdot O$ and $z$ is written $\tanh(r)e^{i\theta}$ with $r=d_2(z,O)$ in hyperbolic polar coordinates the integral $\int_{B_h(0,\omega)}\Phi_{\lambda}(h^{-1}\cdot z')dm(z')$ depends only upon the variable $r$.
\end{prop}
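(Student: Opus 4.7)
The plan is to exploit the rotation invariance of both the domain $B_h(0,\omega)$ and the measure $\mathrm{dm}$, together with the $K$-invariance of the spherical function $\Phi_\lambda$, where $K$ denotes the subgroup of rotations fixing the origin $O$ in $\D$. The goal is to eliminate the angular dependence of $h$ and reduce everything to a quantity involving only $r = d_2(z,O)$.

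First I would invoke the Cartan (KAK) decomposition of $G = SU(1,1)$: any $h \in G$ can be written $h = k_{\theta_1} a_\rho k_{\theta_2}$ with $k_{\theta_i}\in K$ and $a_\rho \in A$. Since $K$ stabilizes $O$, the condition $h\cdot O = z$ forces $\rho = d_2(z,O) = r$ and $k_{\theta_1} \cdot (\tanh r) = z$, so $z = \tanh(r) e^{i\theta_1}$ in hyperbolic polar coordinates. Consequently $h^{-1} = k_{\theta_2}^{-1} a_{-r} k_{\theta_1}^{-1}$.

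Next I would recall that $\Phi_\lambda$ is $K$-invariant on $\D$: using $\langle k\cdot z , b\rangle = \langle z, k^{-1}\cdot b\rangle$ together with the rotation invariance of $db$ on $\partial\D$, one gets $\Phi_\lambda(k\cdot z') = \Phi_\lambda(z')$ for every $k\in K$ and every $z'\in\D$. Applied with $k = k_{\theta_2}^{-1}$ inside the integrand, this lets us drop the leftmost rotation:
\begin{equation*}
\int_{B_h(0,\omega)} \Phi_\lambda(h^{-1}\cdot z')\,\mathrm{dm}(z')
 = \int_{B_h(0,\omega)} \Phi_\lambda\bigl(a_{-r} k_{\theta_1}^{-1}\cdot z'\bigr)\,\mathrm{dm}(z').
\end{equation*}

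Finally I would perform the change of variable $z'' = k_{\theta_1}^{-1}\cdot z'$. The hyperbolic ball $B_h(0,\omega)$ is invariant under $K$, since $d_2$ is $G$-invariant and $K$ fixes $O$; the measure $\mathrm{dm}$ is also $K$-invariant (it is the Riemannian volume for a $G$-invariant metric). Hence the change of variable is legitimate and yields
\begin{equation*}
\int_{B_h(0,\omega)} \Phi_\lambda(h^{-1}\cdot z')\,\mathrm{dm}(z') = \int_{B_h(0,\omega)} \Phi_\lambda(a_{-r}\cdot z'')\,\mathrm{dm}(z''),
\end{equation*}
whose right-hand side depends only on $r$, which concludes the argument. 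There is no real obstacle here beyond carefully invoking the correct invariance properties; the only point worth double-checking is the $K$-invariance of $\Phi_\lambda$, which follows directly from its integral representation on $\partial \D$.
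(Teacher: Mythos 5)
Your proof is correct and follows essentially the same route as the paper: both reduce $h^{-1}$ to $a_{-r}$ composed with rotations and absorb the rotations into the integration variable using the rotation invariance of $B_h(0,\omega)$ and of $dm$, arriving at $\int_{B_h(0,\omega)}\Phi_\lambda(a_{-r}\cdot z')\,dm(z')$. The only real difference is that the paper tacitly chooses $h=\text{rot}_\theta a_r$ and shifts the angular variable in explicit polar coordinates, whereas you use the Cartan decomposition together with the $K$-invariance of $\Phi_\lambda$ to handle an arbitrary $h$ with $h\cdot O=z$, which is slightly more careful since $h$ is only determined up to a right factor in $K$.
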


\begin{proof}
 If $z=\tanh(r)e^{i\theta}$, then $z=\text{rot}_{\theta}a_{r}\cdot O$ and $h^{-1}=a_{-r}\text{rot}_{-\theta}$. Similarly $z'=\text{rot}_{\theta'}a_{r'}\cdot O$. We can write
\bqs
\int_{B_h(0,\omega)}\Phi_{\lambda}(h^{-1}\cdot z')dm(z')=\frac{1}{2}\int_0^\omega\int_0^{2\pi}\Phi_{\lambda}(a_{-r}\text{rot}_{\theta'-\theta}a_{r'}\cdot O)\sinh(2r')dr'd\theta'
\eqs

\bqs
=\frac{1}{2}\int_0^\omega\int_0^{2\pi}\Phi_{\lambda}(a_{-r}\text{rot}_{\psi}a_{r'}\cdot O)\sinh(2r')dr'd\psi=\int_{B_h(0,\omega)}\Phi_{\lambda}(a_{-r}\cdot z')dm(z'),
\eqs
which, as announced, is only a function of $r$.
\end{proof}

We now give an explicit formula for the integral $\int_{B_h(0,\omega)}\Phi_{\lambda}(a_{-r}\cdot z')dm(z')$. We first recall a formula from \cite{helgason:00}.

\begin{lem}
For all $g\in G$ the following equation holds:
 \bqs
\Phi_{\lambda}(g^{-1}\cdot z)=\int_{\partial\D}e^{(-i\lambda +1)<g\cdot O,b>}e^{(i\lambda +1)<z,b>}db
\eqs
\end{lem}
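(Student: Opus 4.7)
The plan is to derive this identity from two classical facts about the horocyclic bracket $\langle z,b\rangle$ and the boundary measure $db$: a cocycle relation for $\langle \cdot,\cdot \rangle$ under the action of $G=SU(1,1)$, and the associated transformation rule for $db$ under this action.

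First I would recall the cocycle identity
\[
\langle g\cdot z,\,g\cdot b\rangle \;=\; \langle z,b\rangle + \langle g\cdot O,\,g\cdot b\rangle,\qquad g\in G,
\]
which follows from the fact that $\langle z,b\rangle$ is (up to sign) the Busemann function based at $b$, together with the identification of $G$ with orientation-preserving isometries of $\D$. Specializing $z=O$ and using $\langle O,\cdot\rangle\equiv 0$ checks consistency. Substituting $z\mapsto g^{-1}\cdot z$ rewrites the cocycle as
\[
\langle g^{-1}\cdot z,\,b\rangle \;=\; \langle z,\,g\cdot b\rangle \;-\; \langle g\cdot O,\,g\cdot b\rangle.
\]

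Next I would recall the transformation rule for the rotation-invariant probability measure $db$ on $\partial \D$ under the action $b\mapsto g\cdot b$, namely
\[
db \;=\; e^{2\langle g\cdot O,\,g\cdot b\rangle}\,d(g\cdot b),
\]
which is nothing but the Poisson kernel $P(g\cdot O,\beta)=e^{2\langle g\cdot O,\beta\rangle}$ appearing as a Radon--Nikodym derivative. This is standard material in Helgason and can be verified directly from $P(z,b)=(1-|z|^2)/|z-b|^2$ by a computation for the two generating one-parameter subgroups $\{a_r\}$ and $\{r_\theta\}$ of $G$.

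With these two ingredients in hand, I would simply start from the definition
\[
\Phi_{\lambda}(g^{-1}\cdot z) \;=\; \int_{\partial\D} e^{(i\lambda+1)\langle g^{-1}\cdot z,\,b\rangle}\,db,
\]
insert the cocycle identity into the exponent, change variables to $\beta = g\cdot b$ using the measure transformation rule, and collect exponents: the factor $e^{-(i\lambda+1)\langle g\cdot O,\beta\rangle}$ coming from the cocycle multiplies with the Jacobian $e^{2\langle g\cdot O,\beta\rangle}$ to produce $e^{(-i\lambda+1)\langle g\cdot O,\beta\rangle}$, yielding exactly
\[
\Phi_{\lambda}(g^{-1}\cdot z) \;=\; \int_{\partial\D} e^{(-i\lambda+1)\langle g\cdot O,\beta\rangle}\,e^{(i\lambda+1)\langle z,\beta\rangle}\,d\beta,
\]
which is the claim after renaming $\beta$ back to $b$.

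The only genuinely delicate step is the measure-transformation identity; the cocycle, by contrast, is almost tautological from the horocyclic interpretation. Since the paper already cites \cite{helgason:00} for the underlying harmonic analysis on $\D$, I would simply invoke the Poisson-kernel formula for $d(g\cdot b)/db$ from that reference rather than re-derive it, making the whole proof a short computation in three lines.
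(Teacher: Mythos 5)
Your argument is correct: the cocycle identity $\langle g\cdot z, g\cdot b\rangle=\langle z,b\rangle+\langle g\cdot O,g\cdot b\rangle$ together with the Poisson-kernel Jacobian $db=e^{2\langle g\cdot O,\,g\cdot b\rangle}\,d(g\cdot b)$ makes the exponents combine as $-(i\lambda+1)+2=-i\lambda+1$, exactly as claimed, and both ingredients are standard (Helgason, \emph{Groups and Geometric Analysis}, Introduction, \S 4). The paper itself gives no proof at all for this lemma --- its ``proof'' is literally the citation to \cite{helgason:00} --- and what you have written is precisely the argument being cited, so this is the same approach with the details filled in.
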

\begin{proof}
 See \cite{helgason:00}.
\end{proof}
It follows immediately that for all $z\in\D$ and $r\in\R$ we have:
\bqs
\Phi_{\lambda}(a_{-r}\cdot z)=\int_{\partial\D}e^{(-i\lambda +1)<a_{r}\cdot O,b>}e^{(i\lambda +1)<z,b>}db
\eqs
We integrate this formula over the hyperbolic ball $B_h(0,\omega)$ which gives:
\bqs
\int_{B_h(0,\omega)}\Phi_{\lambda}(a_{-r}\cdot z')dm(z')=\int_{B_h(0,\omega)}\Bigg(\int_{\partial\D}e^{(-i\lambda +1)<a_{r}\cdot O,b>}e^{(i\lambda +1)<z',b>}db\Bigg)dm(z'),
\eqs
and we exchange the order of integration:
\bqs
=\int_{\partial\D}e^{(-i\lambda +1)<a_{r}\cdot O,b>}\Bigg(\int_{B_h(0,\omega)}e^{(i\lambda +1)<z',b>}dm(z')\Bigg)db.
\eqs
We note that the integral $\int_{B_h(0,\omega)}e^{(i\lambda +1)<z',b>}dm(z')$ does not depend upon the variable $b=e^{i\phi}$. Indeed:
\bqs
\int_{B_h(0,\omega)}e^{(i\lambda +1)<z',b>}dm(z')=\frac{1}{2}\int_{0}^{\omega}\int_{0}^{2\pi}\bigg(\frac{1-\tanh(x)^2}{|\tanh(x)e^{i\theta}-e^{i\phi}|^2}\bigg)^{\frac{i\lambda +1}{2}}\sinh(2x)dxd\theta
\eqs
\bqs
=\frac{1}{2}\int_{0}^{\omega}\int_{0}^{2\pi}\bigg(\frac{1-\tanh(x)^2}{|\tanh(x)e^{i(\theta-\phi)}-1|^2}\bigg)^{\frac{i\lambda +1}{2}}\sinh(2x)dxd\theta
\eqs
\bqs
=\frac{1}{2}\int_{0}^{\omega}\int_{0}^{2\pi}\bigg(\frac{1-\tanh(x)^2}{|\tanh(x)e^{i\theta'}-1|^2}\bigg)^{\frac{i\lambda +1}{2}}\sinh(2x)dxd\theta',
\eqs
and indeed the integral does not depend upon the variable $b$:
\bqs
\int_{B_h(0,\omega)}e^{(i\lambda +1)<z',b>}dm(z')=\int_{B_h(0,\omega)}e^{(i\lambda +1)<z',1>}dm(z').
\eqs
Finally, we can write:
\bqs
\int_{B_h(0,\omega)}\Phi_{\lambda}(a_{-r}\cdot z')dm(z')=\int_{\partial\D}e^{(-i\lambda +1)<a_{r}\cdot O,b>}db\int_{B_h(0,\omega)}e^{(i\lambda +1)<z',1>}dm(z')
\eqs
\bqs
=\Phi_{-\lambda}(a_{r}\cdot O)\int_{B_h(0,\omega)}e^{(i\lambda +1)<z',1>}dm(z')=\Phi_{\lambda}(a_{r}\cdot O)\int_{B_h(0,\omega)}e^{(i\lambda +1)<z',1>}dm(z'),
\eqs
because $\Phi_{\lambda}=\Phi_{-\lambda}$ (as solutions of the same equation).\\
This completes the proof that:
\bq
\int_{B_h(0,\omega)}\Phi_{\lambda}(a_{-r}\cdot z')dm(z')=\Phi_{\lambda}(a_{r}\cdot O)\int_{B_h(0,\omega)}e^{(i\lambda +1)<z',1>}dm(z')
\label{eq:interm}
\eq

\paragraph{The main result} 
At this point we have proved the following proposition.
\begin{prop}
If $K=B_h(0,\omega)$ and $z=\text{rot}_{\theta}a_{r}\cdot O\in K$, $M(z,K)$ is given by the following formula:
\bq
M(z,B_h(0,\omega))\overset{def}{=}\mM(r,\omega)=\frac{1}{4\pi}\int_{\R}\widetilde{\mbW}(\lambda)\Phi_{\lambda}(a_{r}\cdot O)\Psi_{\lambda}(\omega)\lambda \tanh(\frac{\pi}{2}\lambda)d\lambda
\label{eq:mdisk}
\eq
where
\bq
\Psi_{\lambda}(\omega)\overset{def}{=}\int_{B_h(0,\omega)}e^{(i\lambda +1)<z',1>}dm(z')
\label{eq:mldisk}
\eq
\end{prop}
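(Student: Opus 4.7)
The plan is to assemble the formula by combining the three ingredients already developed in the preceding paragraphs: the convolution representation of $M(z,K)$, the Helgason inversion formula \eqref{eq:fourtrans} for $\mbW$, and the radial reduction carried out in the second step together with the explicit identity \eqref{eq:interm}. No new computation is required; the result is essentially a bookkeeping exercise.

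First I would start from the identity $M(z,K)=\mbW\ast \mathds{1}_K(z)$ obtained earlier, substitute the inversion formula \eqref{eq:fourtrans} for $\mbW$, and interchange the order of integration to get
\bqs
M(z,K)=\frac{1}{4\pi}\int_{\R}\widetilde{\mbW}(\lambda)\bigl(\Phi_{\lambda}\ast\mathds{1}_{K}\bigr)(z)\,\lambda\tanh\!\bigl(\tfrac{\pi}{2}\lambda\bigr)\,d\lambda .
\eqs
This swap of integrals is the only delicate point; it is justified under the standing integrability of $\lambda\mapsto \widetilde{\mbW}(\lambda)\lambda\tanh(\pi\lambda/2)$ on $\R$ together with the compactness of $K=B_h(0,\omega)$, via Fubini.

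Next I would write $z=\text{rot}_\theta a_r\cdot O$ so that $z=h\cdot O$ with $h=\text{rot}_\theta a_r$ and $h^{-1}=a_{-r}\text{rot}_{-\theta}$, and expand $\Phi_\lambda\ast\mathds{1}_K(z)=\int_K \Phi_\lambda(h^{-1}\cdot z')\,dm(z')$. The proposition already proved in the second step shows that this integral is radial in $z$, so in particular one may drop the rotation and write
\bqs
\bigl(\Phi_\lambda\ast\mathds{1}_{B_h(0,\omega)}\bigr)(z)=\int_{B_h(0,\omega)}\Phi_\lambda(a_{-r}\cdot z')\,dm(z').
\eqs
Then I would invoke equation \eqref{eq:interm} to rewrite this as $\Phi_\lambda(a_r\cdot O)\,\Psi_\lambda(\omega)$, with $\Psi_\lambda(\omega)$ defined by \eqref{eq:mldisk}.

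Finally, inserting this identity into the inversion formula above yields
\bqs
M(z,B_h(0,\omega))=\frac{1}{4\pi}\int_{\R}\widetilde{\mbW}(\lambda)\,\Phi_\lambda(a_r\cdot O)\,\Psi_\lambda(\omega)\,\lambda\tanh\!\bigl(\tfrac{\pi}{2}\lambda\bigr)\,d\lambda ,
\eqs
which depends on $z$ only through $r=d_2(z,O)$, so it is legitimate to denote it $\mathcal{M}(r,\omega)$. The main (and only) obstacle is the Fubini step; everything else is a direct assembly of already proved identities.
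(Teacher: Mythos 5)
Your proposal is correct and follows exactly the route the paper takes: the paper presents this proposition as the summary of the preceding paragraphs (convolution form of $M$, Helgason inversion of $\mbW$, radiality of $\Phi_\lambda\ast\mathds{1}_{B_h(0,\omega)}$, and identity \eqref{eq:interm}), assembled in the same order you describe. Your explicit remark that the interchange of integrals needs a Fubini justification is a small but welcome addition that the paper leaves implicit.
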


\noindent
We are now in a position to obtain an analytic form for $\mM(r,\omega)$. It is given in the following theorem.
\begin{thm}
For all $(r,\omega)\in\R^{+}\times\R^{+}$:
\bq
 \mM(r,\omega)=\frac{1}{4}\sinh(\omega)^2\cosh(\omega)^2\int_{\R}\widetilde{\mbW}(\lambda)\Phi_{\lambda}^{(0,0)}(r)\Phi_{\lambda}^{(1,1)}(\omega)\lambda \tanh(\frac{\pi}{2}\lambda)d\lambda
\label{eq:formula}
\eq
\end{thm}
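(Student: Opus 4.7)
The plan is to start from the expression \eqref{eq:mdisk} together with \eqref{eq:mldisk} and reduce both the factor $\Phi_\lambda(a_r\cdot O)$ and the factor $\Psi_\lambda(\omega)$ to the hypergeometric form $\Phi_\lambda^{(\alpha,\beta)}$. The shape of the desired identity tells us exactly which reduction is needed: $\Phi_\lambda(a_r\cdot O)=\Phi_\lambda^{(0,0)}(r)$ and $\Psi_\lambda(\omega)=\pi\sinh(\omega)^2\cosh(\omega)^2\Phi_\lambda^{(1,1)}(\omega)$. Once these two identifications are made, the claimed formula follows by straightforward substitution in \eqref{eq:mdisk} and cancellation of the $\pi$ with the $1/(4\pi)$ prefactor.

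For the first identification I would use the Helgason representation recalled in the proof of Lemma \ref{lem:hypergeometric}: writing $a_r\cdot O=\tanh(r)$ in polar coordinates gives
\bqs
\Phi_\lambda(a_r\cdot O)=F\!\bigl(\tfrac{1}{2}(1+i\lambda),\tfrac{1}{2}(1-i\lambda);1;-\sinh(r)^2\bigr),
\eqs
which is precisely the definition of $\Phi_\lambda^{(0,0)}(r)$ with parameters $\alpha=\beta=0$, $\rho=1$.

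For the second identification I would relate $\Psi_\lambda(\omega)$ to the integral computed in Lemma \ref{lem:hypergeometric}. Starting from the definition $\Phi_\lambda(z)=\int_{\partial\D}e^{(i\lambda+1)<z,b>}db$, one integrates over the hyperbolic ball and exchanges the order of integration (Fubini applies since the integrand is bounded and the ball is compact):
\bqs
\int_{B_h(0,\omega)}\Phi_\lambda(z)\,dm(z)=\int_{\partial\D}\!\left(\int_{B_h(0,\omega)}e^{(i\lambda+1)<z,b>}dm(z)\right)db.
\eqs
The inner integral is exactly $\Psi_\lambda(\omega)$ when $b=1$, and the rotational invariance argument already carried out just before \eqref{eq:interm} shows that it does not depend on $b\in\partial\D$. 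Using the normalization $\int_{\partial\D}db=1$ (which is consistent with $\Phi_\lambda(O)=F(\nu,1-\nu;1;0)=1$) together with Lemma \ref{lem:hypergeometric}, we conclude
\bqs
\Psi_\lambda(\omega)=\int_{B_h(0,\omega)}\Phi_\lambda(z)\,dm(z)=\pi\sinh(\omega)^2\cosh(\omega)^2\,\Phi_\lambda^{(1,1)}(\omega).
\eqs

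Substituting both identifications into \eqref{eq:mdisk} yields \eqref{eq:formula} directly. The only genuinely nontrivial step is the reduction of $\Psi_\lambda$, but all the ingredients are available: Lemma \ref{lem:hypergeometric} provides the closed form, the Fubini exchange is harmless on the compact ball, and the $b$-independence was already established in the preceding paragraph of the paper. The main thing to watch is the bookkeeping of the measure on $\partial\D$ so that the prefactor $\pi$ coming from Lemma \ref{lem:hypergeometric} combines cleanly with the $1/(4\pi)$ in \eqref{eq:mdisk} to give the factor $1/4$ in \eqref{eq:formula}.
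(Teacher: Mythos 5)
Your argument is correct and follows essentially the same route as the paper: both reduce the theorem to the identity $\Psi_\lambda(\omega)=\int_{B_h(0,\omega)}\Phi_\lambda(z)\,dm(z)$, then invoke Lemma \ref{lem:hypergeometric} together with Helgason's formula $\Phi_\lambda(a_r\cdot O)=\Phi_\lambda^{(0,0)}(r)$. The only (harmless) difference is that you obtain this identity by Fubini plus the $b$-independence of $\int_{B_h(0,\omega)}e^{(i\lambda+1)<z,b>}dm(z)$ and the normalization $\int_{\partial\D}db=1$, whereas the paper computes both sides in hyperbolic polar coordinates and observes that each equals $\pi\int_0^\omega\Phi_\lambda(a_r\cdot O)\sinh(2r)\,dr$.
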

\begin{proof}
We prove 
 \bqs
\Psi_{\lambda}(\omega)=\int_{B_h(0,\omega)}\Phi_{\lambda}(z)dm(z).
\eqs
Indeed, in hyperbolic polar coordinates, we have:
\bqs
\begin{array}{lcl}
 \Psi_{\lambda}(\omega)&=&\frac{1}{2}\int_{0}^{\omega}\int_{0}^{2\pi}e^{(i\lambda +1)<rot_{\theta}a_{r}\cdot O,1>}\sinh(2r)\,dr\,d\theta\\
&&\\
&=&\frac{1}{2}\int_{0}^{\omega}\int_{0}^{2\pi}e^{(i\lambda +1)<a_{r}\cdot O,e^{-i\theta}>}\sinh(2r)\,dr\,d\theta\\
&&\\
&=&\pi\int_{0}^{\omega}\int_{\partial\D}e^{(i\lambda +1)<a_{r}\cdot O,b>}db\sinh(2r)\,dr\\
&&\\
&=&\pi\int_{0}^{\omega}\Phi_{\lambda}(a_{r}\cdot O)\sinh(2r)\,dr
\end{array}
\eqs
On the other hand:
\bqs
\begin{array}{lcl}
\int_{B_h(0,\omega)}\Phi_{\lambda}(z)dm(z)&=&\frac{1}{2}\int_{0}^{\omega}\int_{0}^{2\pi}\Phi_{\lambda}(a_{r}\cdot O)\sinh(2r)\,dr\,d\theta\\
&&\\
&=&\pi\int_{0}^{\omega}\int_{0}^{2\pi}\Phi_{\lambda}(a_{r}\cdot O)\sinh(2r)\,dr
\end{array}
\eqs
This yields
\bqs
\Psi_{\lambda}(\omega)=\int_{B_h(0,\omega)}\Phi_{\lambda}(z)dm(z),
\eqs
and we  use lemma \eqref{lem:hypergeometric} to establish \eqref{eq:formula}.
\end{proof}

\paragraph{Discussion}

Let us point out that our result can be linked to the work of Folias and Bressloff in \cite{folias-bressloff:04}. They constructed a two-dimensional pulse for a general, radially symmetric synaptic weight function. They obtain a similar formal representation of the integral of the connectivity function $w$ over the disk $B(O,a)$ centered at the origin $O$ and of radius $a$. Using their notations,
\bqs
M(a,r)=\int_{0}^{2\pi}\int_{0}^{a}w(|\mathbf{r}-\mathbf{r}'|)r'\,dr'\,d\theta=2\pi a\int_{0}^{\infty}\breve{w}(\rho)J_{0}(r\rho)J_{1}(a\rho)\,d\rho
\eqs
where $J_{\nu}(x)$ is the Bessel function of the first kind and $\breve{w}$ is the real Fourier transform of $w$. In our case, instead of the Bessel function, we find $\Phi_{\lambda}^{(\nu,\nu)}(r)$ which is linked to the hypergeometric function of the first kind, as explained in lemma \ref{lem:hypergeometric}.

We next adapt the results proved by Folias and Bressloff in \cite{folias-bressloff:04} to the hyperbolic case.

\subsubsection{A hyperbolic radially symmetric stationary-pulse}

We note $V(r)$ a hyperbolic radially symmetric stationary-pulse solution of \eqref{eq:semimod} where $V$ depends only upon the variable $r$ and is such that:
\bqs
V(r)>\kappa, \quad r\in [0,\omega [,
\eqs
\bqs
V(\omega)=\kappa,
\eqs
\bqs
V(r)<\kappa, \quad r\in ]\omega,\infty[,
\eqs
and
\bqs
V(\infty)=0.
\eqs
Substituting into \eqref{eq:semimod} yields:
\bq
\alpha V(r)=\mM(r,\omega)+I(r)
\label{eq:radeq}
\eq
where $\mM(r,\omega)$ is defined in equation \eqref{eq:formula} and $I(r)=\mathcal{I}e^{-\frac{r^2}{2\sigma^2}}$ is a Gaussian input.\\
The condition for the existence of a stationary pulse is given by:
\bq
\alpha \kappa=\mM(\omega)+I(\omega)\overset{def}{=}N(\omega)
\label{eq:excond}
\eq
where
\bqs
\mM(\omega)\overset{def}{=}\mM(\omega,\omega)=\frac{1}{4}\sinh(\omega)^2\cosh(\omega)^2\int_{\R}\widetilde{\mbW}(\lambda)\Phi_{\lambda}^{(0,0)}(\omega)\Phi_{\lambda}^{(1,1)}(\omega)\lambda \tanh(\frac{\pi}{2}\lambda)d\lambda
\eqs
The function $N(\omega)$ is plotted in figure \ref{fig:Nomega} for a range of the input amplitude $\mathcal{I}$. The horizontal dashed lines indicate different values of $\alpha\kappa$, the points of intersection determine the existence of stationary pulse solutions.
\begin{figure}
 \centering
 \includegraphics[width=0.5\textwidth,bb=0 0 402 326]{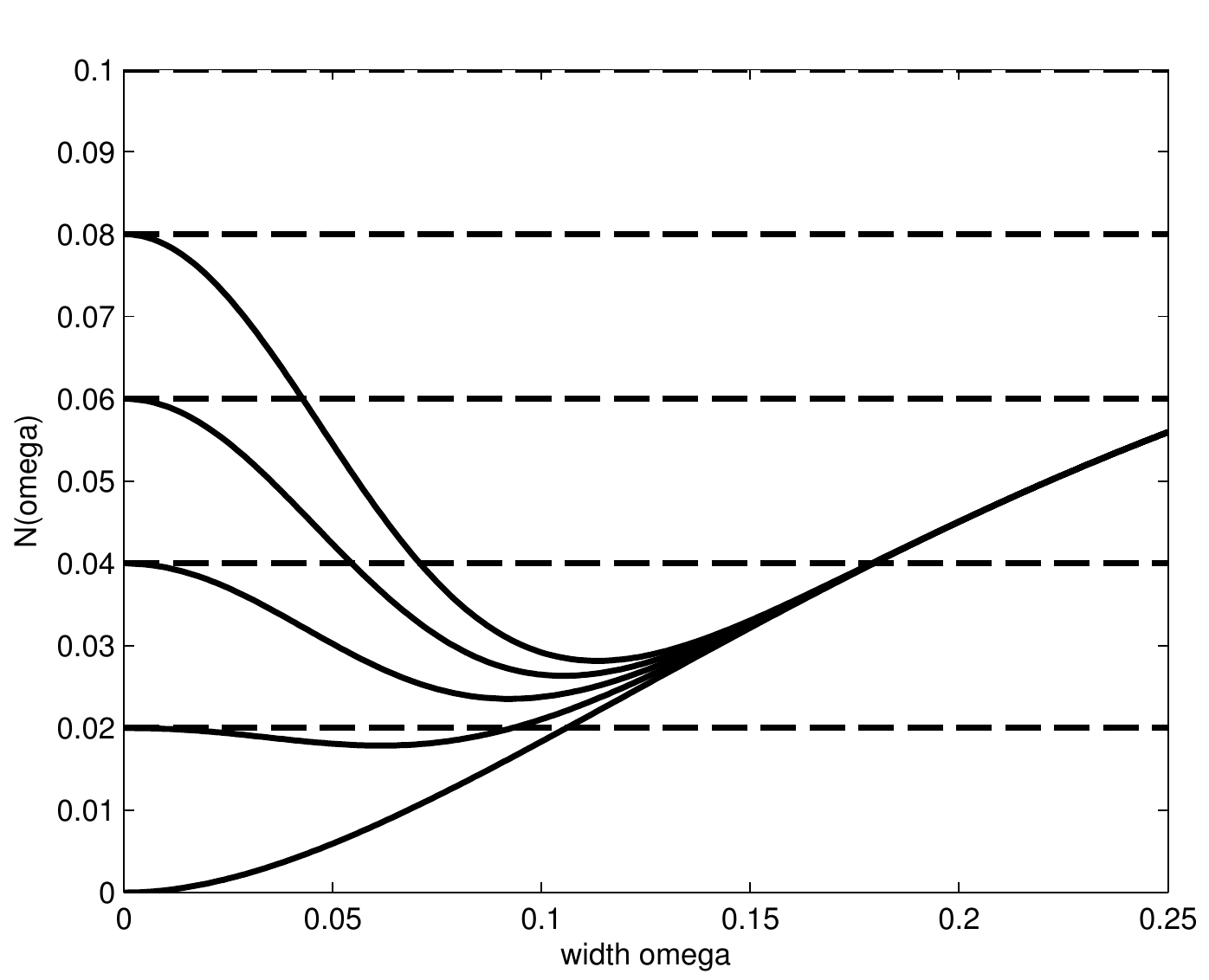}
 \caption{Plot of $N(\omega)$ defined in (\ref{eq:excond}) as a function of the pulse width $\omega$ for several values of the input amplitude $\mathcal{I}$ and for a fixed input width $\sigma=0.05$. The connectivity function has been set to $\tilde{w}(r)=e^{-\frac{r}{b}}$, with $b=0.2$.}
 \label{fig:Nomega}
\end{figure}

We now show that for a general monotonically decreasing weight function $\tilde{w}$, the function $\mM(r,\omega)$ is necessarily a monotonically decreasing function of $r$. This will ensure that the hyperbolic radially symmetric stationary-pulse solution \eqref{eq:radeq} is also a monotonically decreasing function of $r$ in the case of a Gaussian input. Differentiating $\mM$ with respect to $r$ yields:
\bqs
\frac{\partial \mM}{\partial r}(r,\omega)=\frac{1}{2}\int_{0}^{\omega}\int_{0}^{2\pi}\frac{\partial}{\partial r}\bigg(\tilde{w}(d_2(\tanh(r),\tanh(r')e^{i\theta}))\bigg)\sinh(2r')dr'd\theta
\eqs
We have to compute
\bqs
\frac{\partial}{\partial r}\bigg(\tilde{w}(d_2(\tanh(r),\tanh(r')e^{i\theta}))\bigg)=\tilde{w}'(d_2(\tanh(r),\tanh(r')e^{i\theta}))\frac{\partial}{\partial r}\bigg(d_2(\tanh(r),\tanh(r')e^{i\theta})\bigg).
\eqs
It is result of elementary hyperbolic trigonometry that
\bqs
d_2(\tanh(r),\tanh(r')e^{i\theta})=\tanh^{-1}\Bigg(\sqrt{\frac{\tanh(r)^2+\tanh(r')^2-2\tanh(r)\tanh(r')\cos(\theta)}{1+\tanh(r)^2\tanh(r')^2-2\tanh(r)\tanh(r')\cos(\theta)}}\Bigg)
\eqs
we let $\rho=\tanh(r)$, $\rho'=\tanh(r')$ and define
\bqs
F_{\rho',\theta}(\rho)=\frac{\rho^2+\rho'^2-2\rho\rho'\cos(\theta)}{1+\rho^2\rho'^2-2\rho\rho'\cos(\theta)}
\eqs
It follows that
\bqs
\frac{\partial}{\partial \rho}\tanh^{-1}\bigg(\sqrt{F_{\rho',\theta}(\rho)}\bigg)=\frac{\frac{\partial}{\partial \rho}F_{\rho',\theta}(\rho)}{2(1-F_{\rho',\theta}(\rho))\sqrt{F_{\rho',\theta}(\rho)}},
\eqs
and
\bqs
\frac{\partial}{\partial \rho}F_{\rho',\theta}(\rho)=\frac{2(\rho-\rho'\cos(\theta))+2\rho\rho'(\rho'-\rho\cos(\theta))}{(1+\rho^2\rho'^2-2\rho\rho'\cos(\theta))^2}
\eqs
We conclude that if $\rho>\tanh(\omega)$ then for all $0\leq \rho'\leq\tanh(\omega)$ and $0\leq \theta\leq 2\pi$
\bqs
2(\rho-\rho'\cos(\theta))+2\rho\rho'(\rho'-\rho\cos(\theta))>0,
\eqs
which implies $\mM(r,\omega)<0$ for $r>\omega$, since $\tilde{w}'<0$. 

To see that it is also negative for $r<\omega$, we differentiate equation \eqref{eq:formula} with respect to $r$:
\bqs
\frac{\partial \mM}{\partial r}(r,\omega)=\frac{1}{4}\sinh(\omega)^2\cosh(\omega)^2\int_{\R}\widetilde{\mbW}(\lambda)\frac{\partial}{\partial r}\Phi_{\lambda}^{(0,0)}(r)\Phi_{\lambda}^{(1,1)}(\omega)\lambda \tanh(\frac{\pi}{2}\lambda)d\lambda
\eqs
The following formula holds for the hypergeometric function (see Erdelyi in \cite{erdelyi:85}):
\bqs
 \frac{d}{dz}F(a,b;c;z)=\frac{ab}{c}F(a+1,b+1;c+1;z)
\eqs
It implies
\bqs
\frac{\partial}{\partial r}\Phi_{\lambda}^{(0,0)}(r)=-\frac{1}{2}\sinh(r)\cosh(r)(1+\lambda^2)\Phi_{\lambda}^{(1,1)}(r).
\eqs
Substituting in the previous equation giving $\frac{\partial \mM}{\partial r}$ we find:
\bqs
\frac{\partial \mM}{\partial r}(r,\omega)=-\frac{1}{64}\sinh(2\omega)^2\sinh(2r)\int_{\R}\widetilde{\mbW}(\lambda)(1+\lambda^2)\Phi_{\lambda}^{(1,1)}(r)\Phi_{\lambda}^{(1,1)}(\omega)\lambda \tanh(\frac{\pi}{2}\lambda)d\lambda,
\eqs
implying that:
\bqs
\textbf{sgn}(\frac{\partial \mM}{\partial r}(r,\omega))=\textbf{sgn}(\frac{\partial \mM}{\partial r}(\omega,r)).
\eqs
Consequently, $\frac{\partial \mM}{\partial r}(r,\omega)<0$ for $r<\omega$. Hence $V$ is monotonically decreasing in $r$ for any monotonically decreasing synaptic weight function $\tilde{w}$.

\subsubsection{Linear stability analysis}

We now analyse the evolution of small time-dependent perturbations of the hyperbolic stationary-pulse solution through linear stability analysis.

\paragraph{Spectral analysis of the linearized operator} Equation \eqref{eq:semimod} is linearized about the stationary solution $V(r)$ by introducing the time-depndent perturbation:
\bqs
v(z,t)=V(r)+\phi(z,t)
\eqs
This leads to the linear equation:
\bqs
\partial_t \phi(z,t)=-\alpha \phi(z,t)+\int_{\D} W(z,z')H'(V(r')-\kappa)\phi(z',t)\text{dm}(z').
\eqs
We separate variables by setting $\phi(z,t)=\phi(z)e^{\beta t}$ to obtain the equation:
\bqs
(\beta +\alpha)\phi(z)=\int_{\D} W(z,z')H'(V(r')-\kappa)\phi(z')\text{dm}(z')
\eqs
Introducing the hyperbolic polar coordinates $z=\tanh(r)e^{i\theta}$ and using the result:
\bqs
H'(V(r)-\kappa)=\delta(V(r)-\kappa)=\frac{\delta(r-\omega)}{|V'(\omega)|}
\eqs
we obtain:
\bqs
(\beta +\alpha)\phi(z)=\frac{1}{2}\int_{0}^{\omega}\int_{0}^{2\pi}W(\tanh(r)e^{i\theta},\tanh(r')e^{i\theta'})\frac{\delta(r'-\omega)}{|V'(\omega)|}\phi(\tanh(r')e^{i\theta'})\sinh(2r')dr'd\theta'
\eqs
\bqs
=\frac{\sinh(2\omega)}{2|V'(\omega)|}\int_{0}^{2\pi}W(\tanh(r)e^{i\theta},\tanh(\omega)e^{i\theta'})\phi(\tanh(\omega)e^{i\theta'})d\theta'
\eqs
With a slight abuse of notation we are led to study the solutions of the integral equation:
\bq
(\beta +\alpha)\phi(r,\theta)=\frac{\sinh(2\omega)}{2|V'(\omega)|}\int_{0}^{2\pi}\mathcal{W}(r,\omega;\theta'-\theta)\phi(\omega,\theta')d\theta'
\label{eq:intequ}
\eq
where:
\bqs
\mathcal{W}(r,\omega;\varphi)\overset{def}{=}\tilde{w}\circ\tanh^{-1}\Bigg(\sqrt{\frac{\tanh(r)^2+\tanh(\omega)^2-2\tanh(r)\tanh(\omega)\cos(\varphi)}{1+\tanh(r)^2\tanh(\omega)^2-2\tanh(r)\tanh(\omega)\cos(\varphi)}}\Bigg)
\eqs

\paragraph{Essential spectrum} If the function $\phi$ satisfies the condition
\bqs
\int_{0}^{2\pi}\mathcal{W}(r,\omega;\theta')\phi(\omega,\theta-\theta')d\theta'=0 \quad \forall r,
\eqs
then equation \eqref{eq:intequ} reduces to:
\bqs
\beta +\alpha=0
\eqs
yielding the eigenvalue:
\bqs
\beta=-\alpha<0
\eqs
This part of the essential spectrum is negative and does not cause instability.
\paragraph{Discrete spectrum} If we are not in the previous case we have to study the solutions of the integral equation (\ref{eq:intequ}).\\
This equation shows that $\phi(r,\theta)$ is completely determined by its values $\phi(\omega,\theta)$ on the circle of equation $r=\omega$. Hence, we need only to consider $r=\omega$, yielding the integral equation:
\bqs
(\beta +\alpha)\phi(\omega,\theta)=\frac{\sinh(2\omega)}{2|V'(\omega)|}\int_{0}^{2\pi}\mathcal{W}(\omega,\omega;\theta')\phi(\omega,\theta-\theta')d\theta'
\eqs
The solutions of this equation are exponential functions $e^{\gamma\theta}$, where $\gamma$ satisfies:
\bqs
(\beta +\alpha)=\frac{\sinh(2\omega)}{2|V'(\omega)|}\int_{0}^{2\pi}\mathcal{W}(\omega,\omega;\theta')e^{-\gamma\theta'}d\theta'
\eqs
By the requirement that $\phi$ is $2\pi$-periodic in $\theta$, it follows that $\gamma=in$, where $n\in\mathbb{Z}$. Thus the integral operator with kernel $\mathcal{W}$ has a discrete spectrum given by:
\bqs
(\beta_n+\alpha)=\frac{\sinh(2\omega)}{2|V'(\omega)|}\int_{0}^{2\pi}\mathcal{W}(\omega,\omega;\theta')e^{-in\theta'}d\theta'
\eqs
\bqs
=\frac{\sinh(2\omega)}{2|V'(\omega)|}\int_{0}^{2\pi}\tilde{w}\circ\tanh^{-1}\Bigg(\sqrt{\frac{2\tanh(\omega)^2(1-\cos(\theta'))}{1+\tanh(\omega)^4-2\tanh(\omega)^2\cos(\theta')}}\Bigg)e^{-in\theta'}d\theta'
\eqs
\bqs
=\frac{\sinh(2\omega)}{|V'(\omega)|}\int_{0}^{\pi}\tilde{w}\circ\tanh^{-1}\Bigg(\frac{2\tanh(\omega)\sin(\theta')}{\sqrt{(1-\tanh(\omega)^2)^2+4\tanh(\omega)^2\sin(\theta')^2}}\Bigg)e^{-i2n\theta'}d\theta'
\eqs
$\beta_n$ is real since:
\bqs
\Im(\beta_n)=-\frac{\sinh(2\omega)}{|V'(\omega)|}\int_{0}^{\pi}\tilde{w}\circ\tanh^{-1}\Bigg(\frac{2\tanh(\omega)\sin(\theta')}{\sqrt{(1-\tanh(\omega)^2)^2+4\tanh(\omega)^2\sin(\theta')^2}}\Bigg)\sin(2n\theta')d\theta'=0.
\eqs
Hence,
\bqs
\beta_n=\Re(\beta_n)=-\alpha+\frac{\sinh(2\omega)}{|V'(\omega)|}\int_{0}^{\pi}\tilde{w}\circ\tanh^{-1}\Bigg(\frac{2\tanh(\omega)\sin(\theta')}{\sqrt{(1-\tanh(\omega)^2)^2+4\tanh(\omega)^2\sin(\theta')^2}}\Bigg)\cos(2n\theta')d\theta'
\eqs
Since $\tilde{w}\circ\tanh^{-1}(r)$ is a positive function of $r$, it follows that:
\bqs
\beta_n\leq\beta_0
\eqs
Stability of the hyperbolic stationary pulse requires that for all $n\geq 0$, $\beta_n<0$. This can be rewritten as:
\bqs
 \frac{\sinh(2\omega)}{|V'(\omega)|}\int_{0}^{\pi}\tilde{w}\circ\tanh^{-1}\Bigg(\frac{2\tanh(\omega)\sin(\theta')}{\sqrt{(1-\tanh(\omega)^2)^2+4\tanh(\omega)^2\sin(\theta')^2}}\Bigg)\cos(2n\theta')d\theta'<\alpha \quad n\geq0
\eqs
Using the fact that $\beta_n\leq\beta_0$ for all $n\geq1$, we obtain the reduced stability condition:
\bqs
\frac{\mathcal{W}_{0}(\omega)}{|V'(\omega)|}<\alpha
\eqs
where
\bqs
\mathcal{W}_{0}(\omega)\overset{def}{=}\sinh(2\omega)\int_{0}^{\pi}\tilde{w}\circ\tanh^{-1}\Bigg(\frac{2\tanh(\omega)\sin(\theta')}{\sqrt{(1-\tanh(\omega)^2)^2+4\tanh(\omega)^2\sin(\theta')^2}}\Bigg)d\theta'
\eqs

From \eqref{eq:radeq} we have:
\bqs
V'(\omega)=\frac{1}{\alpha}(-\mM_{r}(\omega)+I'(\omega))
\eqs
where
\bqs
\mM_{r}(\omega)\overset{def}{=}-\frac{\partial\mM}{\partial r}(\omega,\omega)=\frac{1}{64}\sinh(2\omega)^3\int_{\R}\widetilde{\mbW}(\lambda)(1+\lambda^2)\Phi_{\lambda}^{(1,1)}(\omega)\Phi_{\lambda}^{(1,1)}(\omega)\lambda \tanh(\frac{\pi}{2}\lambda)d\lambda
\eqs
We have previously established that $\mM_{r}(\omega)>0$ and $I'(\omega)$ is negative by definition. Hence, letting $\mathcal{D}(\omega)=|I'(\omega)|$, we have
\bqs
|V'(\omega)|=\frac{1}{\alpha}(\mM_{r}(\omega)+\mathcal{D}(\omega)).
\eqs
By substitution we obtain another form of the reduced stability condition:
\bq
\mathcal{D}(\omega)>\mathcal{W}_{0}(\omega)-\mM_{r}(\omega)
\label{eq:stabcond}
\eq
We also have:
\bqs
\mM'(\omega)=\frac{d}{d\omega}\mM(\omega,\omega)=\frac{\partial\mM}{\partial r}(\omega,\omega)+\frac{\partial\mM}{\partial \omega}(\omega,\omega)=\mW_0(\omega)-\mM_r(\omega),
\eqs
and
\bqs
N'(\omega)=\mM'(\omega)+I'(\omega)=\mathcal{W}_{0}(\omega)-\mM_{r}(\omega)-\mathcal{D}(\omega),
\eqs
showing that the stability condition \eqref{eq:stabcond} is satisfied when $N'(\omega)<0$ and is not satisfied when $N'(\omega)>0$. 

\section{Numerical results}

The aim of this section is to numerically solve \eqref{eq:semihom} for different values of the parameters. This implies developing a numerical scheme that approaches the solution of our equation, and proving that this scheme effectively converges to the solution.\\
Since equation \eqref{eq:semihom} is defined on $\D$,  computing the solutions on the whole hyperbolic disk has same the complexity as computing the solutions of usual Euclidean neural field equations defined on $\R^2$. As most authors in the Euclidean case \cite{folias-bressloff:04,laing-troy:03,laing-troy-etal:02,owen-laing-etal:07}, we reduce the domain of integration to a compact region of the hyperbolic disk. Practically, we  work on the Euclidean ball of radius $a=0.5$ and center $0$. Note that a Euclidean ball centered at the origin is also a centered hyperbolic ball, their radii being different.\\
We have divided this section in four parts. The first part is dedicated to the study of the discretization scheme of equation \eqref{eq:semihom}. In the following three parts, we study the solutions of different connectivity functions: exponential function \ref{subsection:exponential}, Gabor function \ref{subsection:gabor} and a difference of Gaussians function \ref{subsection:gaussian}.

\subsection{Numerical schemes}

Let us consider the modified equation of \eqref{eq:semihom}:
\bq \left\{
    \begin{array}{ll}
\partial_t V(z,t)=-\alpha V(z,t)+\int_{B(0,a)} W(z,z')S(V(z',t))\text{dm}(z')+I(z,t)\quad t\in J\\
V(z,0)=V_0(z)
\end{array}
\right.
\label{eq:modifhom}
\eq
We assume that the connectivity function satisfies the conditions (\textbf{C1})-(\textbf{C2}). Moreover we express $z$ in (Euclidean) polar coordinates such that $z=re^{i\theta}$, $V(z,t)=V(r,\theta,t)$ and $W(z,z')=W(r,\theta,r',\theta')$. The integral in equation \eqref{eq:modifhom} is then:
\bqs
\int_{B(0,a)} W(z,z')S(V(z',t))\text{dm}(z')=\int_0^a \int_0^{2\pi} W(r,\theta,r',\theta')S(V(r',\theta',t))\frac{r'dr'd\theta'}{(1-r'^2)^2}
\eqs
We define $\mR$ to be the rectangle $\mR\overset{def}{=}[0,a]\times[0,2\pi]$.
\subsubsection{Discretization scheme}
We discretize $\mR$ in order to turn \eqref{eq:modifhom} into a finite number of equations. For this purpose we introduce $h_1=\frac{a}{N}$, $N\in\mathbb{N}^{*}=\mathbb{N}\backslash\{0\}$ and $h_2=\frac{2\pi}{M}$, $M\in\mathbb{N}^{*}$,
\bqs
\forall i\in \llbracket 1,N+1 \rrbracket\quad r_i=(i-1)h_1,
\eqs
\bqs
\forall j\in \llbracket 1,M+1 \rrbracket\quad \theta_j=(j-1)h_2,
\eqs
and obtain the $(N+1)(M+1)$ equations:
\bqs
\frac{dV}{dt}(r_i,\theta_j,t)=-\alpha V(r_i,\theta_j,t)+\int_{\mR}W(r_i,\theta_j,r',\theta')S(V(r',\theta',t))\frac{r'dr'd\theta'}{(1-r'^2)^2}+I(r_i,\theta_j,t)
\eqs
which define the discretization of \eqref{eq:modifhom}:

\bq \left\{
    \begin{array}{ll}
\frac{d\,\tilde{V}}{d\,t}(t)=-\alpha \tilde{V}(t)+\mbW\cdot S(\tilde{V})(t)+\tilde{I}(t)\quad t\in J\\
\tilde{V}(0)=\tilde{V}_0
\end{array}
\right.
\label{eq:discrhom}
\eq
where $\tilde{V}(t)\in\mM_{N+1,M+1}(\R)$ \footnote{$\mM_{n,p}(\R)$ is the space of the matrices of size $n\times p$ with real coefficients.}, $\tilde{V}(t)_{i,j}=V(r_i,\theta_j,t)$. Similar definitions apply to $\tilde{I}$ and $\tilde{V}_0$. Moreover:
\bqs
\mbW\cdot S(\tilde{V})(t)_{i,j}=\int_{\mR}W(r_i,\theta_j,r',\theta')S(V(r',\theta',t))\frac{r'dr'd\theta'}{(1-r'^2)^2}
\eqs
It remains to discretize the integral term. For this as in \cite{faye-faugeras:09}, we use the rectangular rule for the quadrature so that for all $(r,\theta)\in\mR$ we have:
\bqs
\int_0^a \int_0^{2\pi} W(r,\theta,r',\theta')S(V(r',\theta',t))\frac{r'dr'd\theta'}{(1-r'^2)^2}\cong h_1h_2\sum_{k=1}^{N+1}\sum_{l=1}^{M+1}W(r,\theta,r_k,\theta_l)S(V(r_k,\theta_l,t))\frac{r_k}{(1-r_k^2)^2}
\eqs
We end up with the following numerical scheme, where $\mcV_{i,j}(t)$ (resp. $\mathcal{I}_{i,j}(t)$) is an approximation of $\tilde{V}_{i,j}(t)$ (resp. $\tilde{I}_{i,j}$), $\forall (i,j)\in\llbracket 1,N+1\rrbracket \times \llbracket 1,M+1\rrbracket$:
\bqs
\frac{d\mcV_{i,j}}{dt}(t)=-\alpha \mcV_{i,j}(t)+h_1h_2\sum_{k=1}^{N+1}\sum_{l=1}^{M+1}\widetilde{W}_{k,l}^{i,j}S(\mcV_{k,l})(t)+\mathcal{I}_{i,j}(t)
\eqs
with $\widetilde{W}_{k,l}^{i,j}\overset{def}{=}W(r_i,\theta_j,r_k,\theta_l)\frac{r_k}{(1-r_k^2)^2}$.
\subsubsection{Discussion}
We discuss the error induced by the rectangular rule for the quadrature. Let $f$ be a function which is $\mathcal{C}^2$ on a rectangular domain $[a,b]\times[c,d]$. If we denote by $E_f$ this error, then $|E_{f}|\leq \frac{(b-a)^2(d-c)^2}{4mn}\| f\|_{\mC^2}$ where $m$ and $n$ are the number of subintervals used and $\| f\|_{\mC^2}=\sum_{|\alpha|\leq 2}\sup_{[a,b]\times[c,d]}|\partial^{\alpha}f|$ where, as usual, $\alpha$ is a multi-index. As a consequence, if we want to control the error, we have to impose that the solution is, at least, $\mathcal{C}^2$ in space.\\
Four our numerical experiments we use the specific function $\text{ode}45$ of Matlab which is based on an explicit Runge-Kutta (4,5) formula (see \cite{bellen-zennaro:05} for more details on Rung-Kutta methods).\\
We can also establish a proof of the convergence of the numerical scheme which is exactly the same as in \cite{faye-faugeras:09} excepted that we use the theorem of continuous dependence of the solution for ordinary differential equations.

\subsection{Purely excitatory exponential connectivity function}\label{subsection:exponential}
In this subsection, we give some numerical solutions of \eqref{eq:semihom} in the case where the connectivity function is an exponential function, $w(x)=e^{-\frac{|x|}{b}}$, with $b$ a positive parameter. Only excitation is present in this case. In all the experiments we set $\alpha=0.1$ and $S(x)=\frac{1}{1+e^{-\mu x}}$ with $\mu=10$.
\paragraph{Constant input} We fix the external input $I(z)$ to be of the form:
\bqs
I(z)=\mathcal{I}e^{-\dfrac{d_2(z,0)^2}{\sigma^2}}
\eqs
In all experiments we set $\mathcal{I}=0.1$ and $\sigma=0.05$, this means that the input has a sharp profile centered at $0$.
\begin{figure}[htbp]
 \centering
\subfigure[$b=1$]{
\label{fig:expdb1}
\includegraphics[width=0.4\textwidth,bb=0 0 448 336]{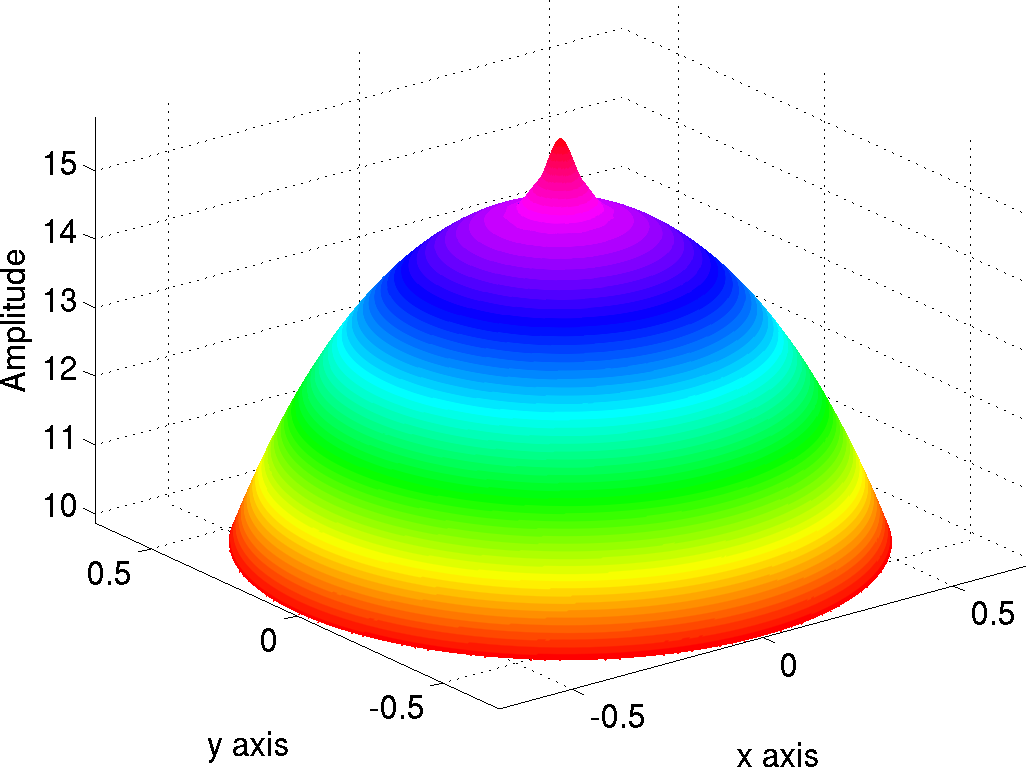}}
\hspace{.3in}
\subfigure[$b=0.2$]{
\label{fig:expdb02}
\includegraphics[width=0.4\textwidth,bb=0 0 448 336]{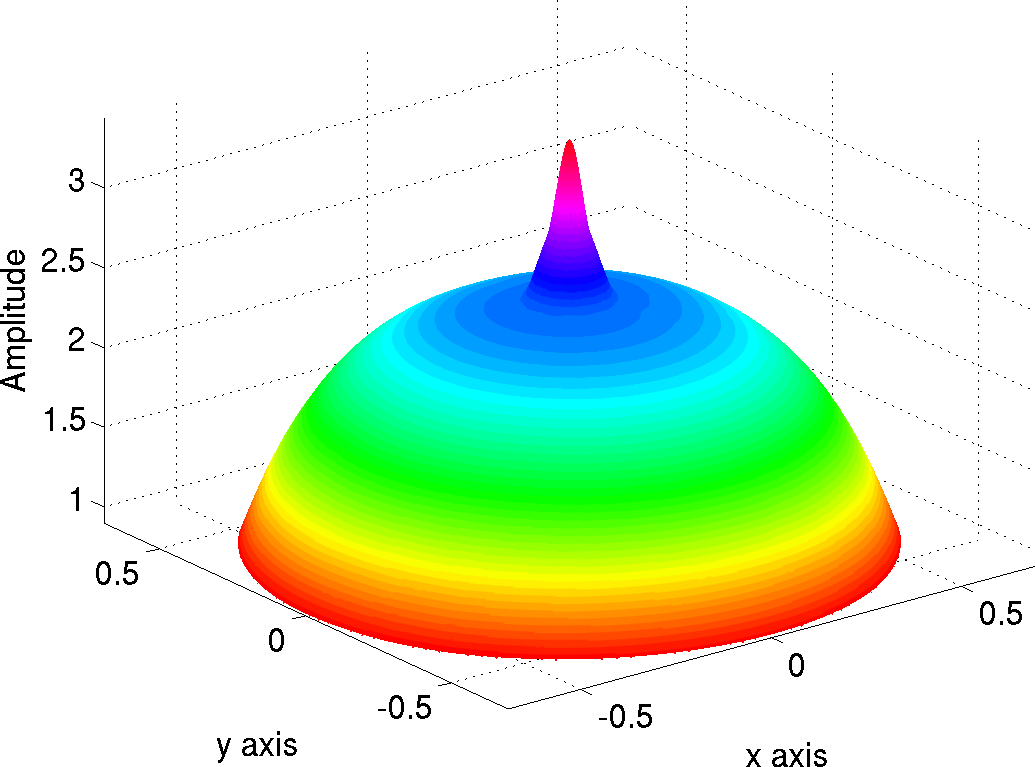}}\\
\subfigure[$b=0.1$]{
\label{fig:expdb01}
\includegraphics[width=0.4\textwidth,bb=0 0 448 336]{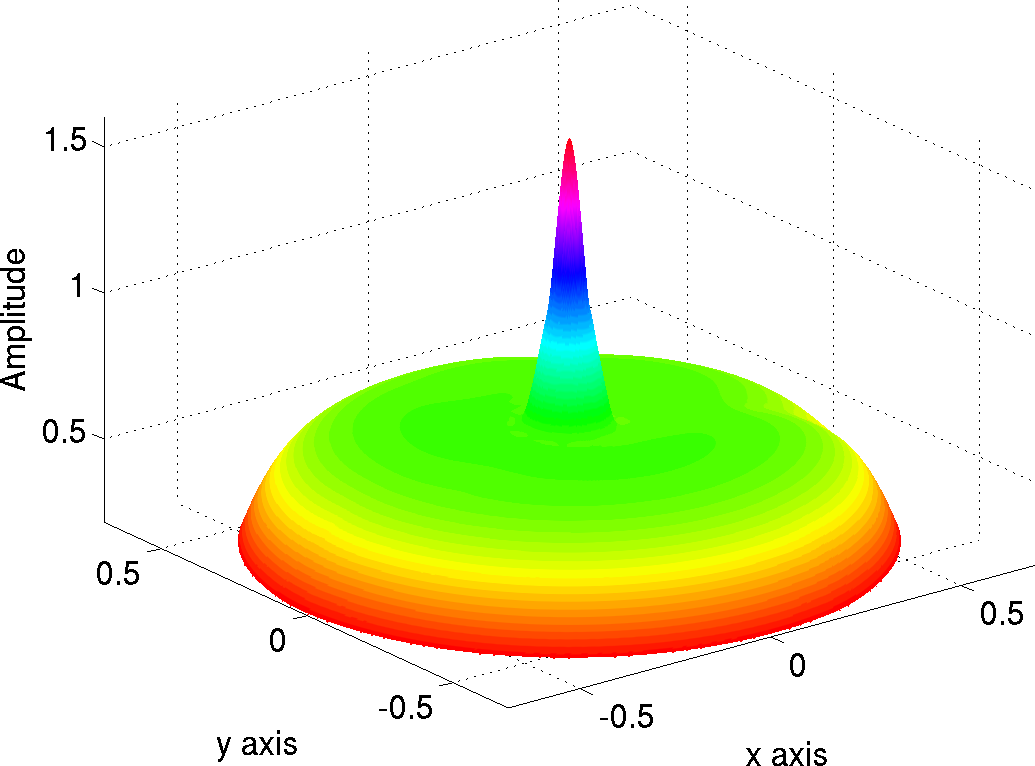}}
\caption{Plots of the solution of equation \eqref{eq:semihom} at $T=2500$ for the values $\mu=10,\alpha=0.1$ and for decreasing values of the width $b$ of the connectivity, see text.}
\label{fig:expd}
\end{figure}
We show in figure \ref{fig:expd} plots of the solution at time $T=2500$ for three different values of the width $b$ of the exponential function. When $b=1$, the whole network is hightly excited, see figure \ref{fig:expdb1}. When $b$ changes from $1$ to $0.1$ the amplitude of the solution decreases, and the area of high excitation becomes concentrated around the external input. 
\paragraph{Variable input} In this paragraph, we allow the external current to depend upon the time variable. We have:
\bqs
I(z,t)=\mathcal{I}e^{-\dfrac{d_2(z,z_0(t))^2}{\sigma^2}}
\eqs
where $z_0(t)=r_0 e^{i\Omega_0 t}$. This is a bump rotating with angular velocity $\Omega_0$ around the circle of radius $r_0$ centered at the origin. In our numerical experiments we set $r_0=0.4$, $\Omega_0=0.01$, $\mathcal{I}=0.1$ and $\sigma=0.05$. We plot in figure \ref{fig:expvariable} the solution at different times $T=100,150,200,250$.
\begin{figure}
 \centering
 \includegraphics[width=0.6\textwidth,bb=0 0 467 385]{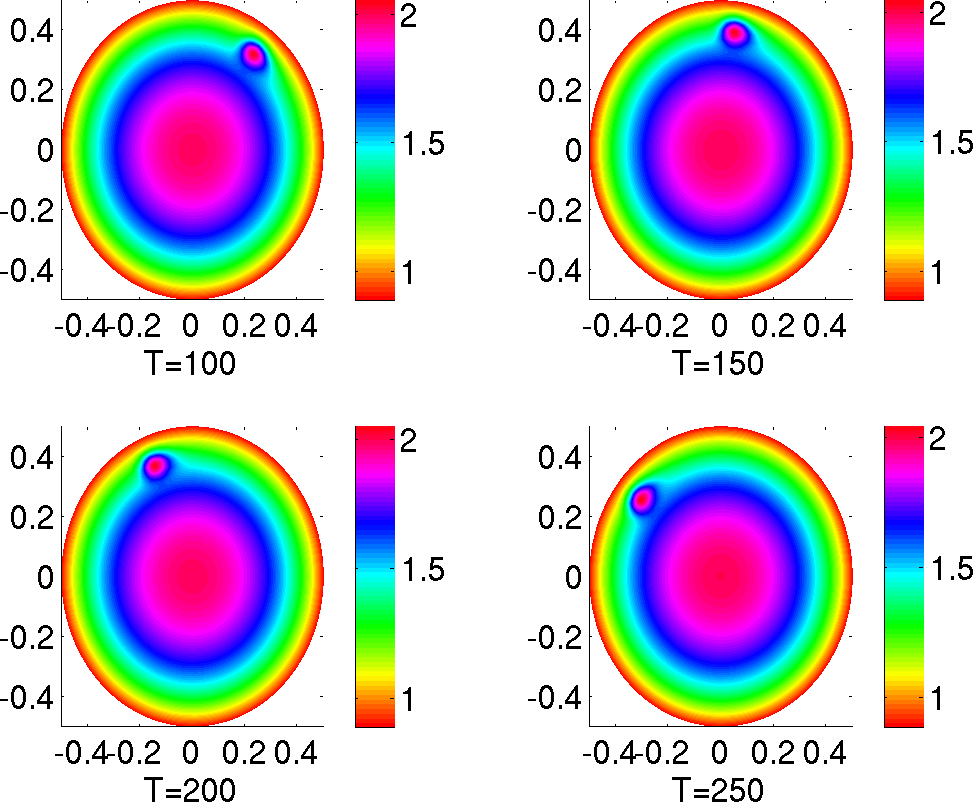}
 \caption{Plots of the solution of equation (\ref{eq:semihom}) in the case of an exponential connectivity function with $b=0.1$ at different times with a time-dependent input, see text.}
 \label{fig:expvariable}
\end{figure}

\paragraph{High gain limit} We consider the high gain limit $\mu\rightarrow\infty$ of the sigmoid function and we propose to illustrate section \ref{section:bump} with a numerical simulation. We set $\alpha=1$, $\kappa=0.04$, $\omega=0.18$. We fix the input to be of the form:
\bqs
I(z)=\mathcal{I}e^{-\dfrac{d_2(z,0)^2}{\sigma^2}}
\eqs
with $\mathcal{I}=0.04$ and $\sigma=0.05$. Then the condition of existence of a stationary pulse \eqref{eq:excond} is satisfied, see figure \ref{fig:Nomega}. We plot a bump solution according to \eqref{eq:excond} in figure \ref{fig:bump}.\\

\begin{figure}[htbp]
 \centering
\subfigure[$b=0.2$]{
\label{fig:bump1}
\includegraphics[width=0.4\textwidth,bb=0 0 521 379]{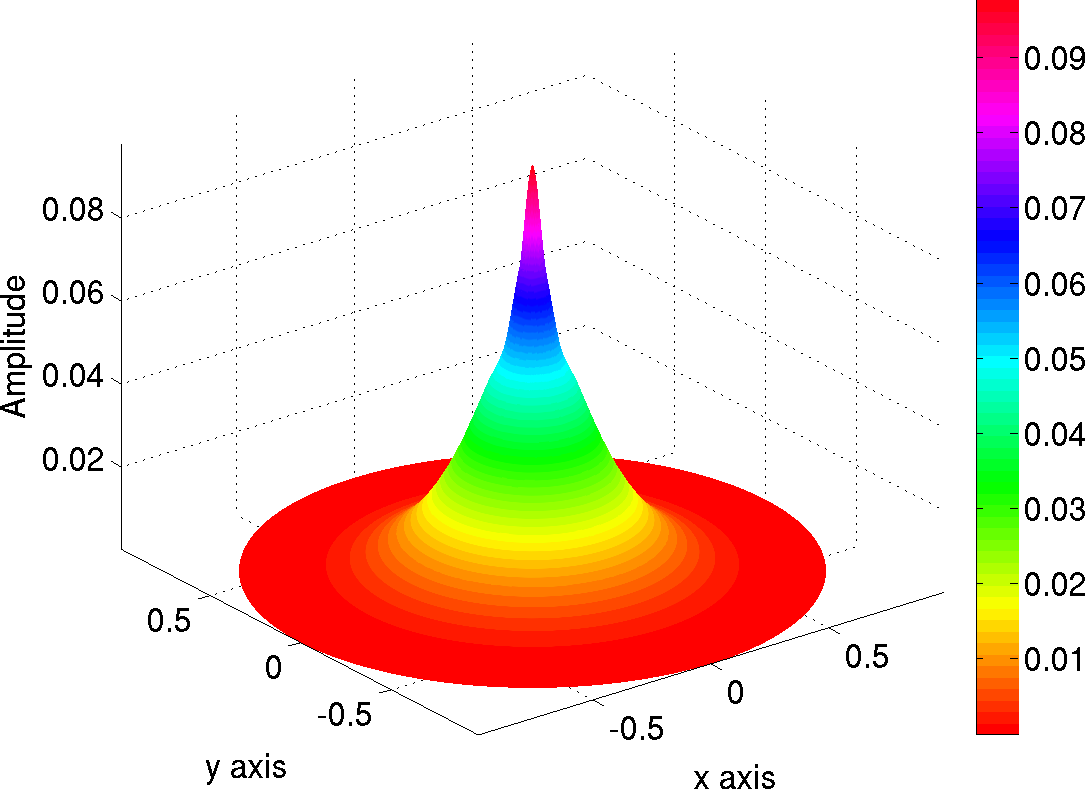}}
\hspace{.3in}
\subfigure[$b=0.2$]{
\label{fig:bump2}
\includegraphics[width=0.4\textwidth,bb=0 0 521 379]{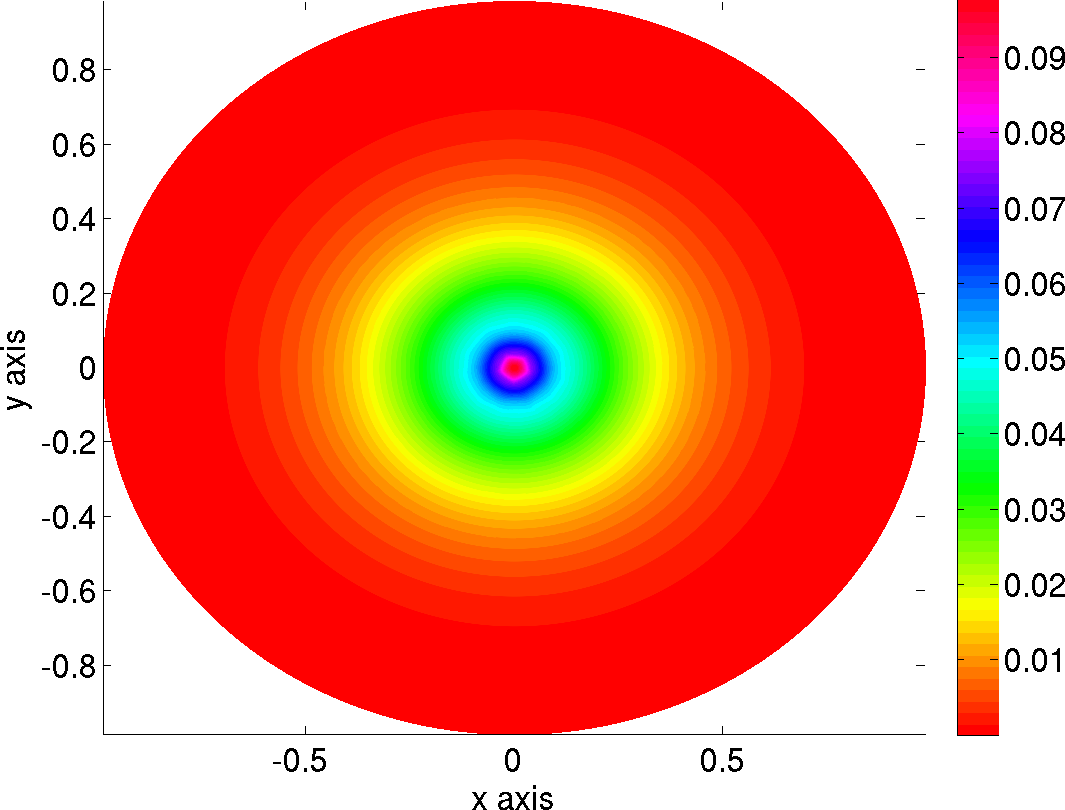}}\\
\caption{Plots of a bump solution of equation \eqref{eq:radeq} for the values $\alpha=1$, $\kappa=0.04$, $\omega=0.18$ and for $b=0.2$ for the width of the connectivity, see text.}
\label{fig:bump}
\end{figure}

\subsection{Excitatory and inhibitory connectivity function}\label{subsection:gabor}
We give some numerical solutions of \eqref{eq:semihom} in the case where the connectivity function is a Gabor function. In all the experiments we set $\alpha=0.1$ and $S(x)=\frac{1}{1+e^{-\mu x}}$ with $\mu=10$. 
The Gabor function is given by $G(x)=\frac{1}{\sqrt{b}}\left(1-2\frac{x^2}{b^2}\right)e^{-\frac{x^2}{b}}$, where $b$ is a positive parameter, takes positive values near the origin and negative values further away.

The external input $I(z,t)$ is equal to zero. In figure \ref{fig:gabor}, we show several solutions of equation \eqref{eq:semihom} at $T=2500$ for decreasing values of the width $b$ of the connectivity. We see the emergence of multi-bump solutions. In figure \ref{fig:b04}, there is one bump centered at $O$. When decreasing the width $b$ of the connectivity, for the range of values $b\in[0.2;0.35]$, the solutions present an increasing number of areas of high level of activity. For example, in figure \ref{fig:b02}, there exist five such areas. Figure \ref{fig:b01} shows the appearance of a second crown of localized patterns centered at $0$, and figure \ref{fig:b007} shows three such crowns. Note that all the solutions display interesting symmetries.
\begin{figure}[htbp]
 \centering
\subfigure[$b=0.4$]{
\label{fig:b04}
\includegraphics[width=0.4\textwidth,bb=0 0 495 388]{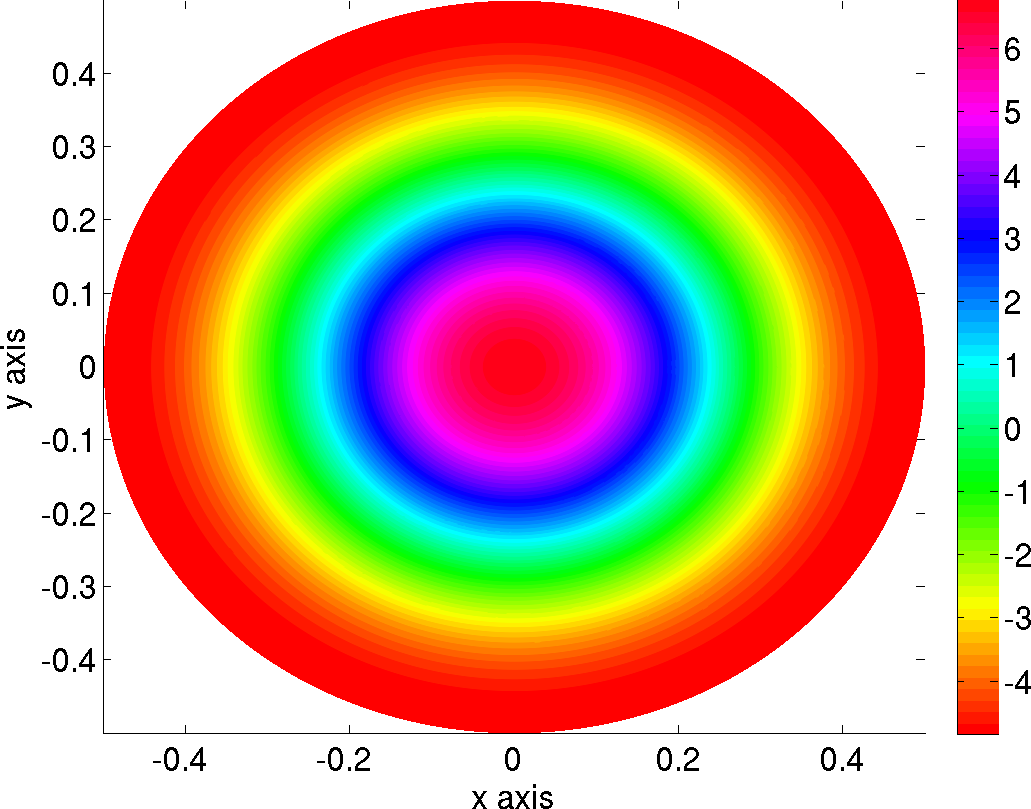}}
\hspace{.3in}
\subfigure[$b=0.35$]{
\label{fig:b035}
\includegraphics[width=0.4\textwidth,bb=0 0 495 388]{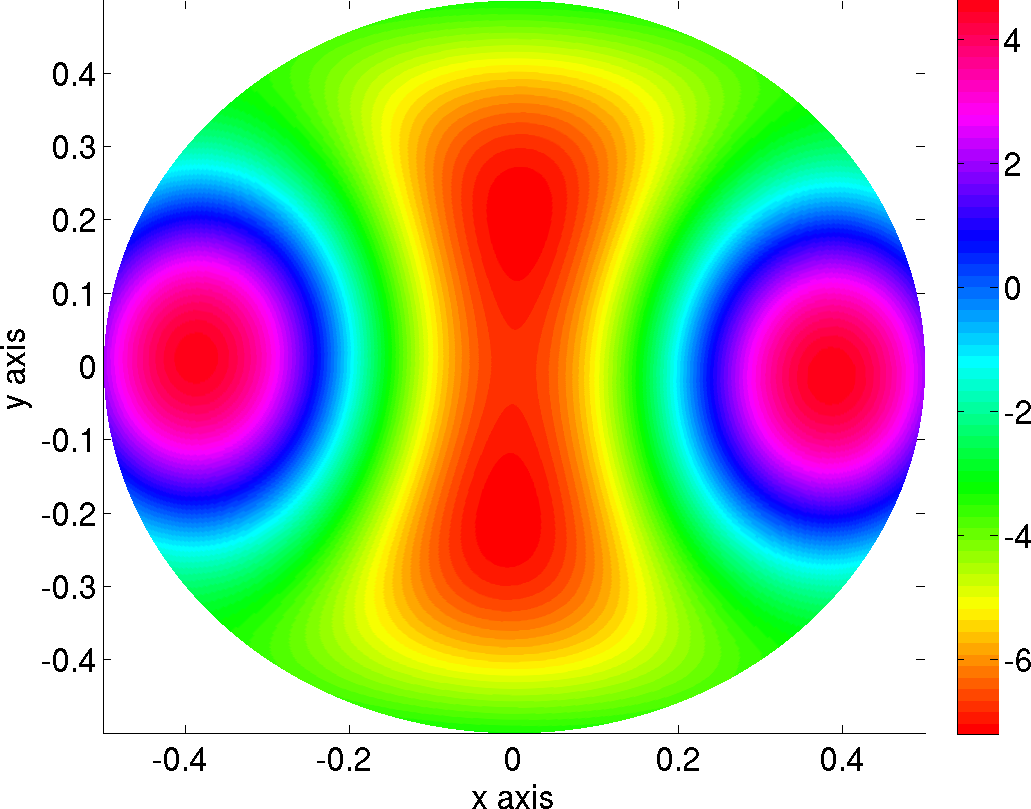}}\\
\subfigure[$b=0.3$]{
\label{fig:b03}
\includegraphics[width=0.4\textwidth,bb=0 0 495 388]{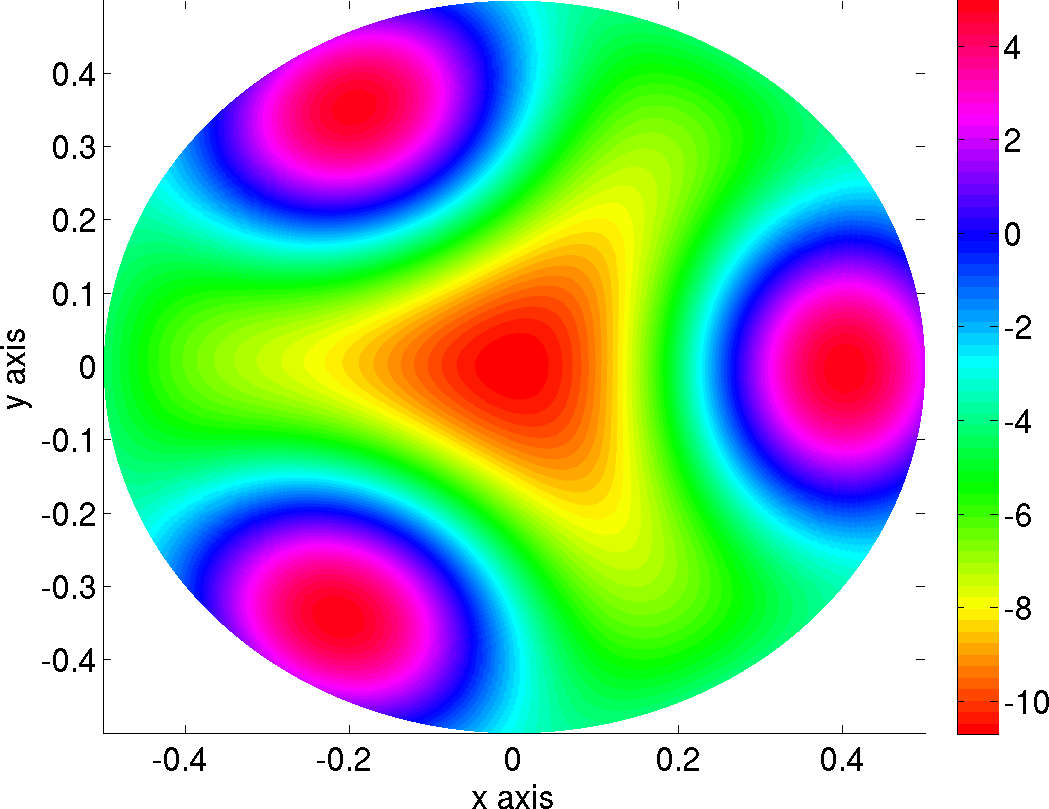}}
\hspace{.3in}
\subfigure[$b=0.2$]{
\label{fig:b02}
\includegraphics[width=0.4\textwidth,bb=0 0 495 388]{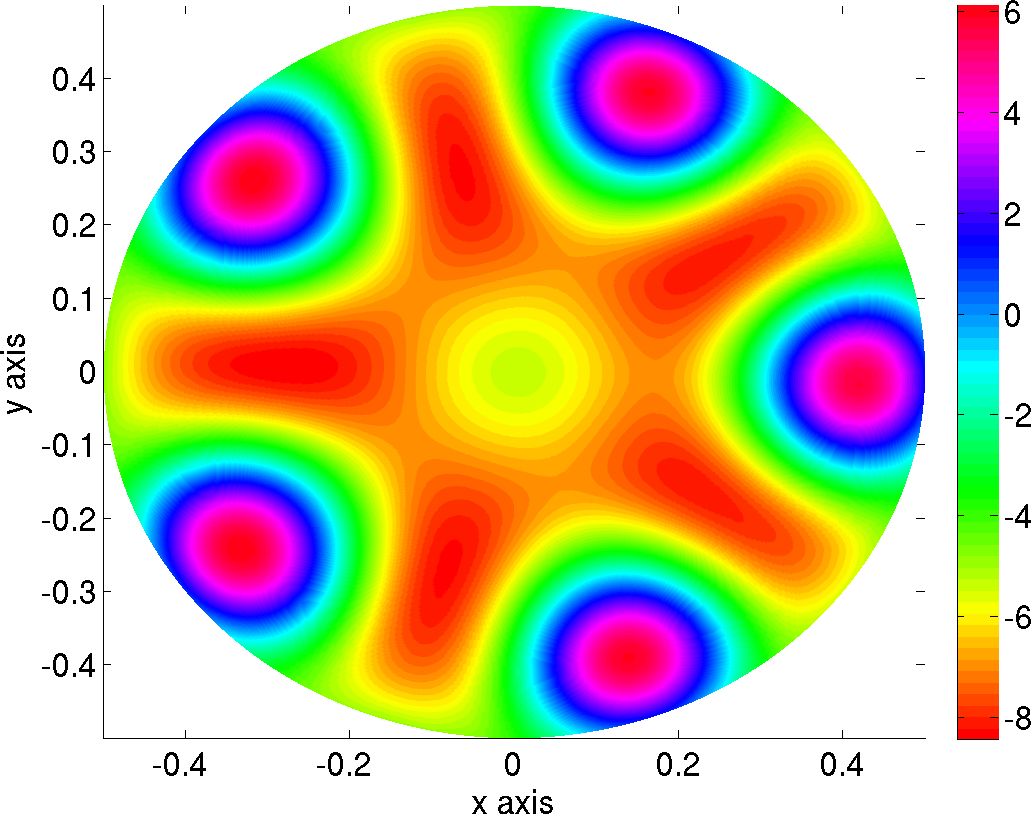}}\\
\subfigure[$b=0.1$]{
\label{fig:b01}
\includegraphics[width=0.4\textwidth,bb=0 0 495 388]{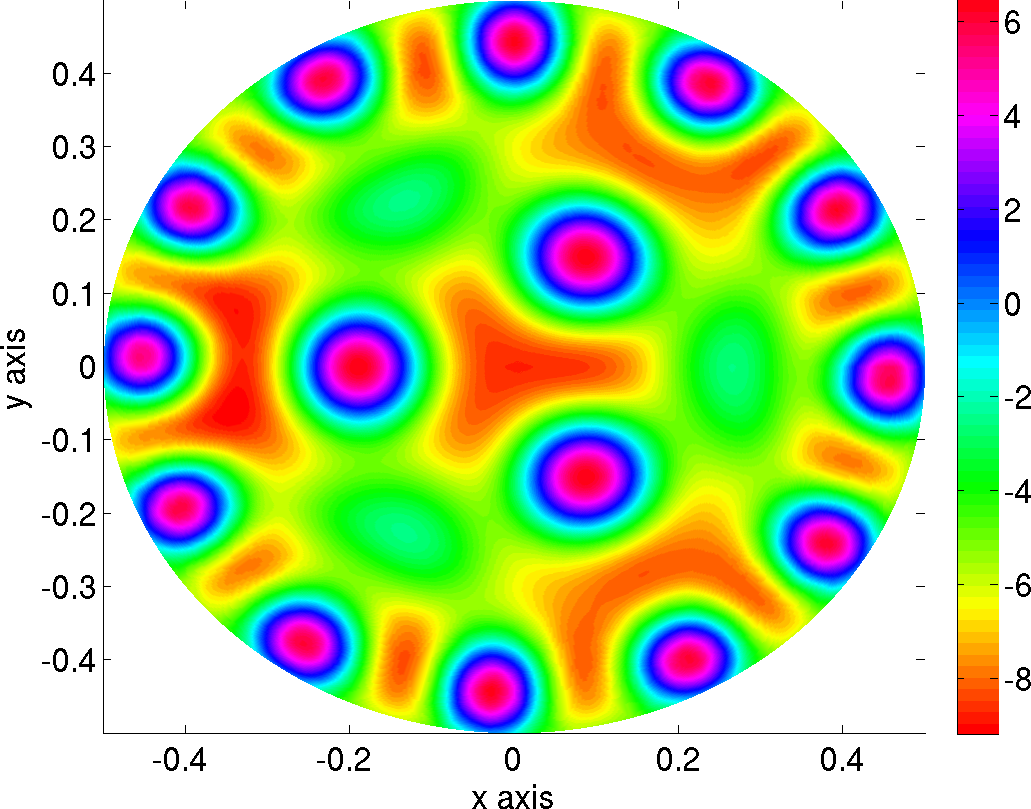}}
\hspace{.3in}
\subfigure[$b=0.07$]{
\label{fig:b007}
\includegraphics[width=0.4\textwidth,bb=0 0 495 388]{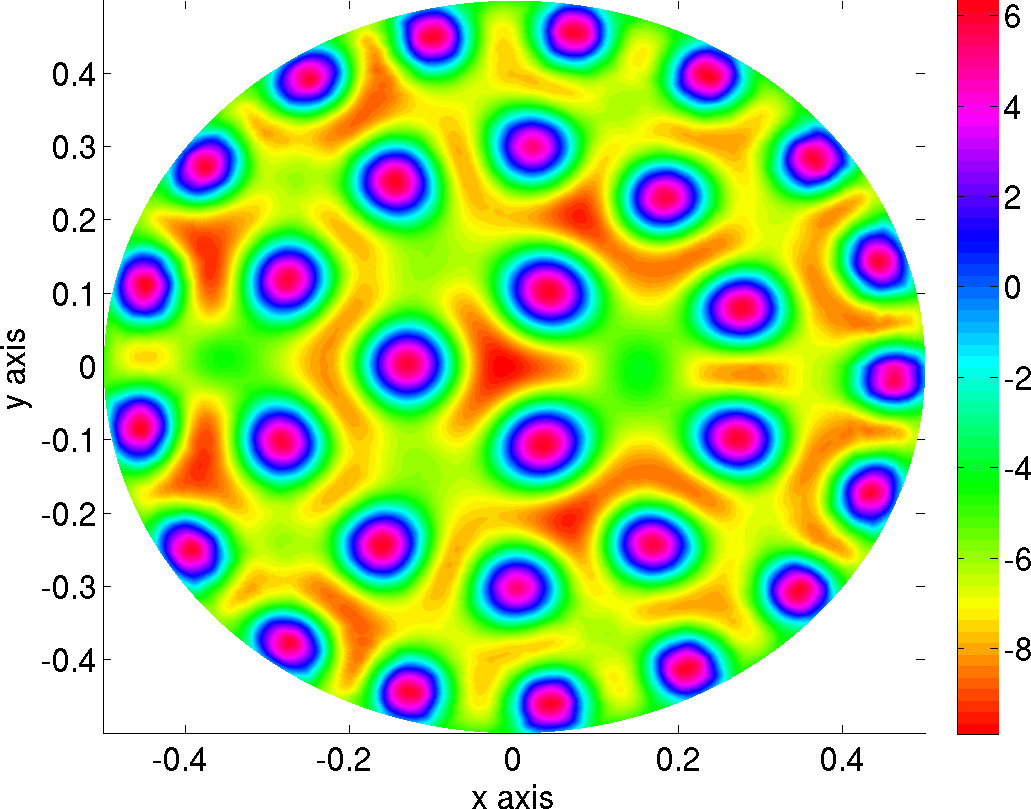}}\\

\caption{Plots of the solutions of equation \eqref{eq:semihom} in the case of a Gabor connectivity function at time $T=2500$ for the values $\mu=10,\alpha=0.1$ and for decreasing values of the width $b$ of the connectivity, see text.}
\label{fig:gabor}
\end{figure}

We slightly change the shape of the connectivity function by choosing the difference of two Gaussians $w(x)=\frac{1}{\sqrt{2\pi\sigma_1^2}}e^{-\frac{x^2}{\sigma_1^2}}-
\frac{A}{\sqrt{2\pi\sigma_2^2}}e^{-\frac{x^2}{\sigma_2^2}}$. In all the experiments, we take $\sigma_1=0.1,\sigma_2=0.2$ and $A=1$.\\
As shown in figure \ref{fig:gabormexhat} the two connectivity functions, Gabor and difference of Gaussians, feature an excitatory center and an inibitory surround.
\begin{figure}
 \centering
 \includegraphics[width=0.5\textwidth,bb=0 0 484 394]{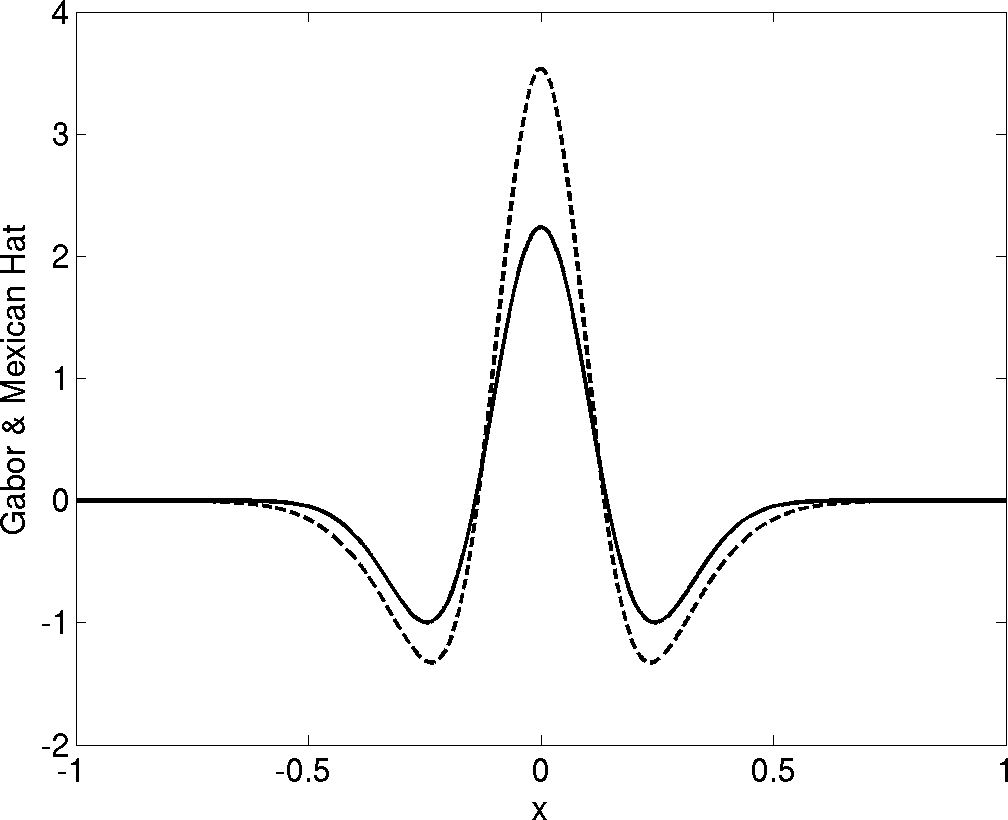}
 \caption{Plot of the Gabor function $G$ with $b=0.2$ and in dashed lines the difference of Gaussian functions $w$ with $\sigma_1=0.1,\sigma_2=0.2$ and $A=1$, see text. They both feature an excitatory center and an inhibitory surround.}
 \label{fig:gabormexhat}
\end{figure}
We illustrate the behaviour of the solutions when increasing the slope $\mu$ of the sigmoid. We set the sigmoid $S(x)=\frac{1}{1+e^{-\mu x}}-\frac{1}{2}$ so that it is equal to 0 at the origin and we choose the external input equal to zero, $I(z,t)=0$. In this case the constant function equal to 0 is a solution of \eqref{eq:semihom}.

\begin{figure}[htbp]
 \centering
\subfigure[$\mu=1$]{
\label{fig:mu1}
\includegraphics[width=0.4\textwidth,bb=0 0 495 388]{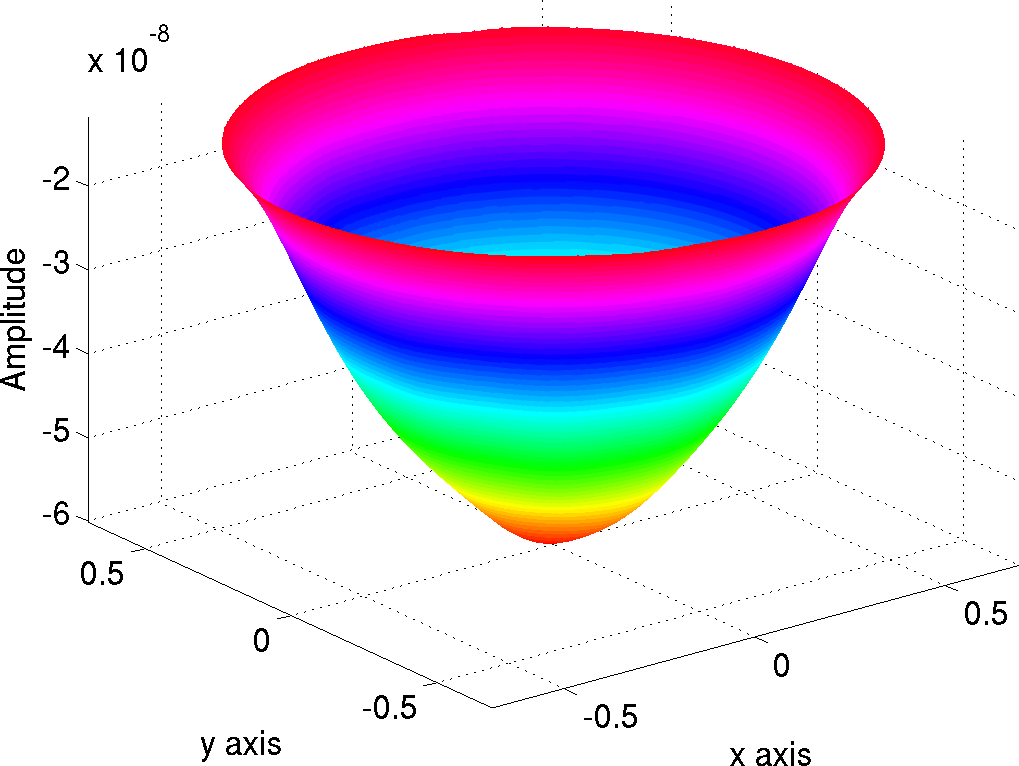}}
\hspace{.3in}
\subfigure[$\mu=3$]{
\label{fig:mu3}
\includegraphics[width=0.4\textwidth,bb=0 0 495 388]{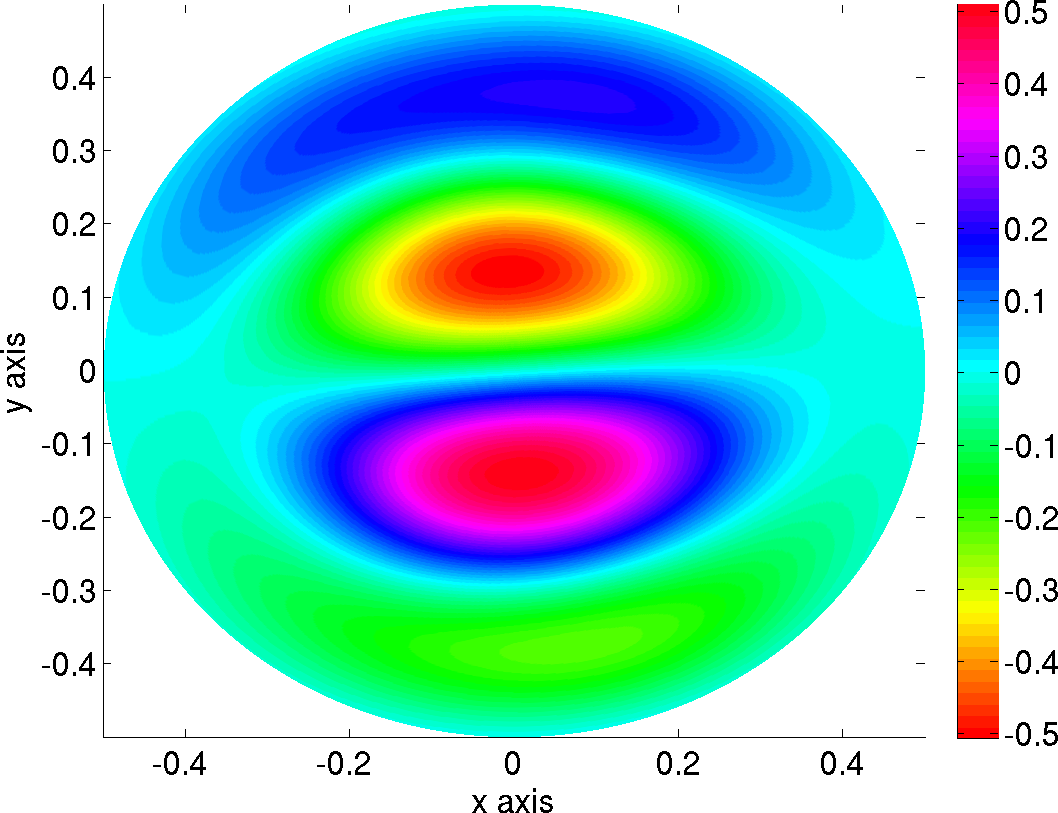}}\\
\subfigure[$\mu=5$]{
\label{fig:mu5}
\includegraphics[width=0.4\textwidth,bb=0 0 495 388]{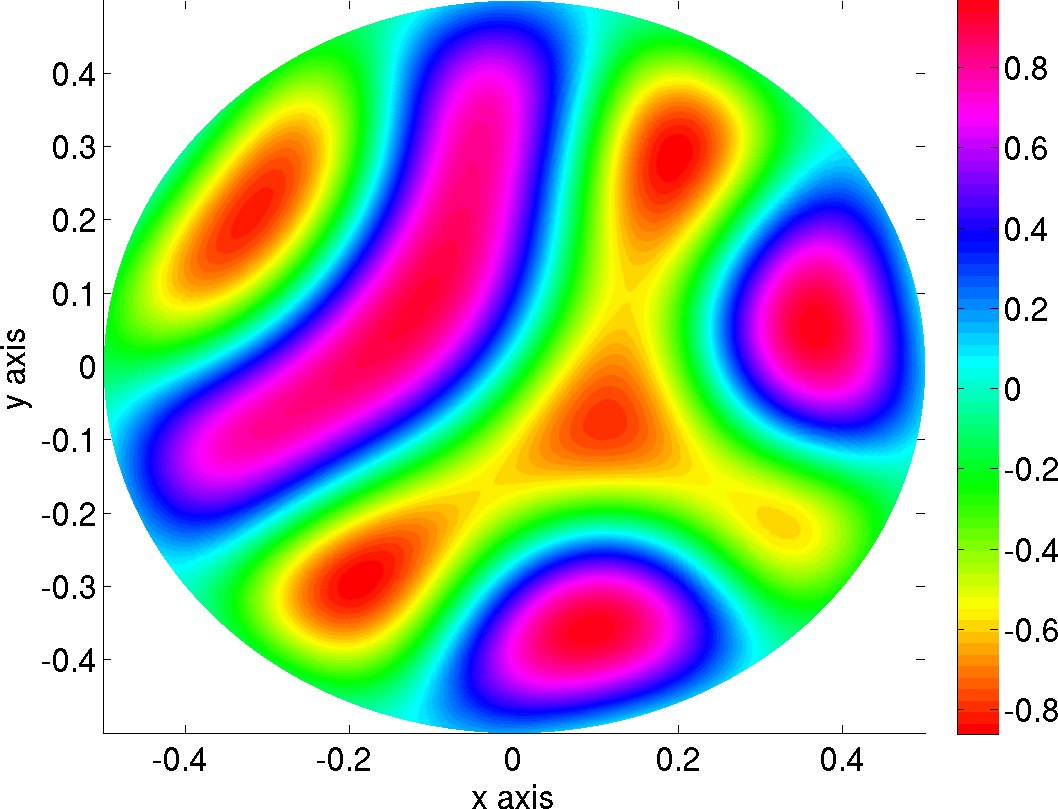}}
\hspace{.3in}
\subfigure[$\mu=10$]{
\label{fig:mu10}
\includegraphics[width=0.4\textwidth,bb=0 0 495 388]{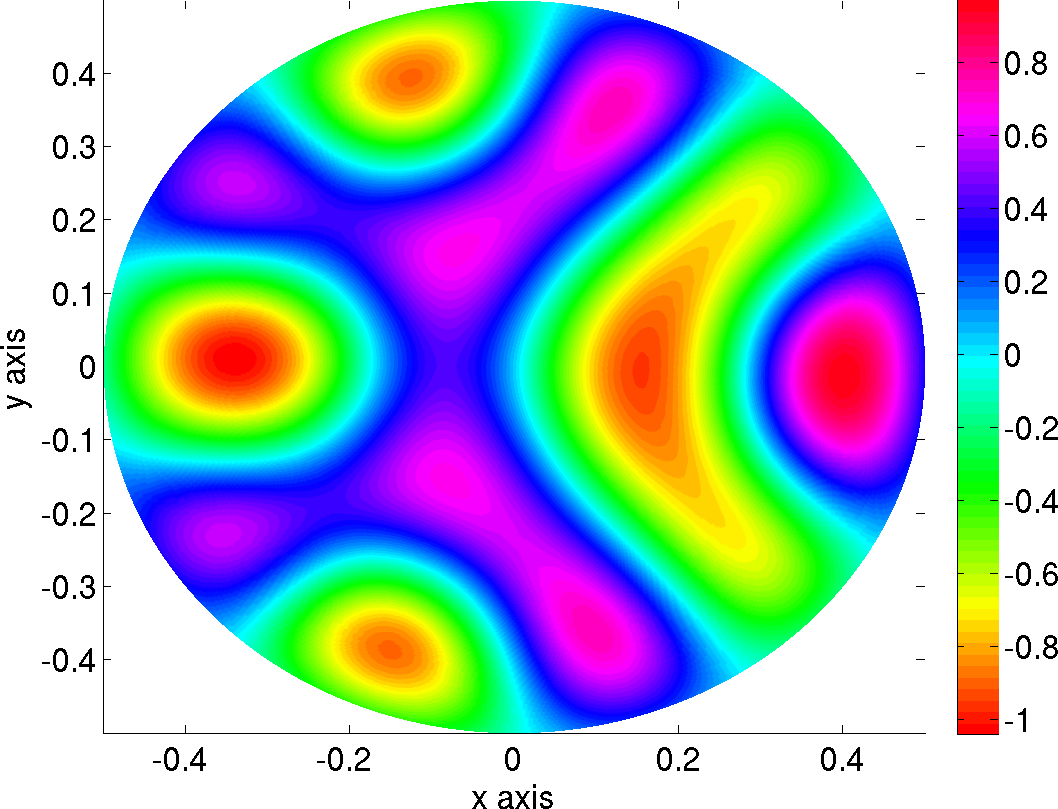}}\\
\subfigure[$\mu=20$]{
\label{fig:mu20}
\includegraphics[width=0.4\textwidth,bb=0 0 495 388]{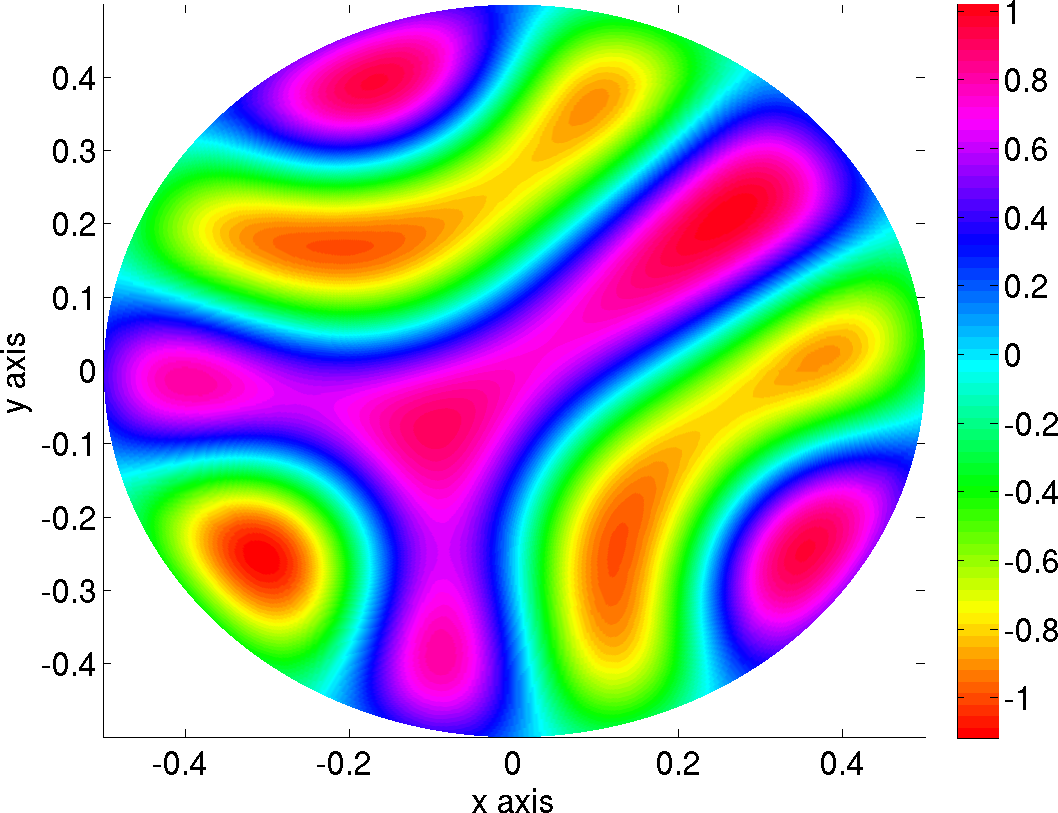}}
\hspace{.3in}
\subfigure[$\mu=30$]{
\label{fig:mu30}
\includegraphics[width=0.4\textwidth,bb=0 0 495 388]{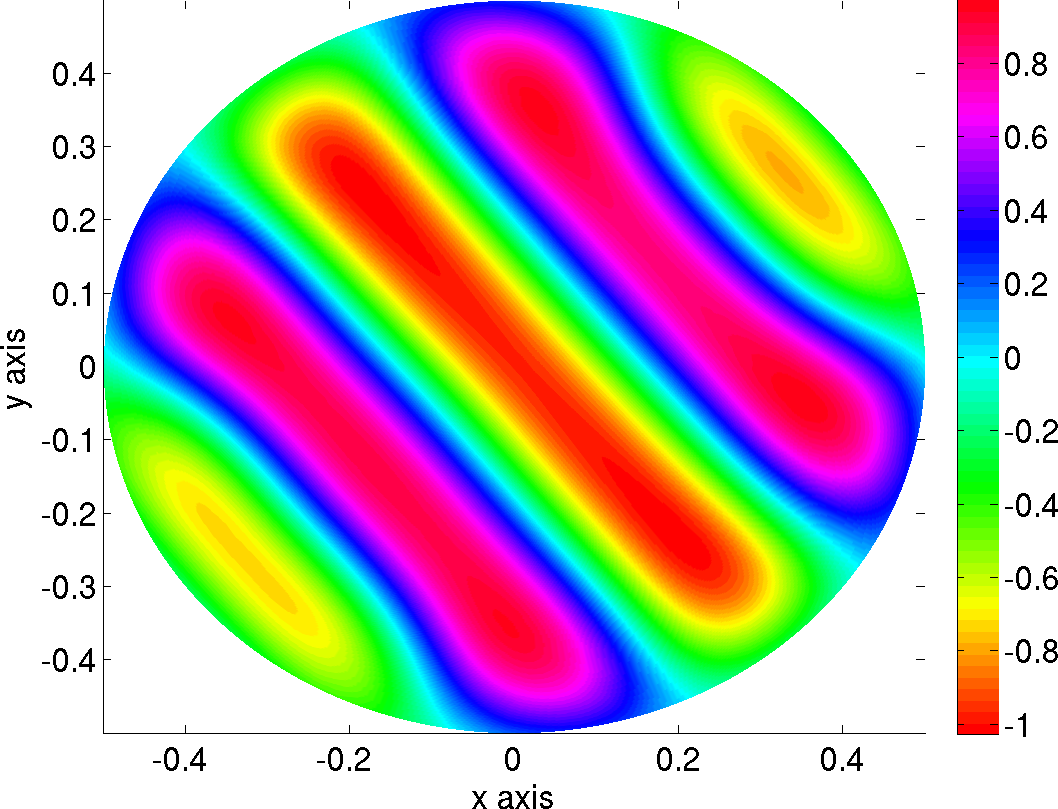}}\\
\caption{Plots of the solutions of equation \eqref{eq:semihom} in the case where the connectivity function is the difference of two Gaussians at time $T=2500$ for $\alpha=0.1$ and for increasing values of the slope $\mu$ of the sigmoid, see text.}
\label{fig:diffgauss}
\end{figure}
For small values of the slope $\mu$, the dynamics of the solution is trivial: every solution asymptotically converges to the null solution, as shown in figure \ref{fig:mu1}. When increasing $\mu$, the stability bound, found in subsection \ref{subsection:stability} is no longer satisfied and the null solution may no longer be stable. In effect this solution may bifurcate to other, more interesting solutions. We plot in figures \ref{fig:mu3},\ref{fig:mu5},\ref{fig:mu10},\ref{fig:mu20} and \ref{fig:mu30}, some solutions at $T=2500$ for different values of $\mu$. We can see exotic patterns which feature some interesting symmetries. The formal study of these bifurcated solutions is left for future work.

\section{Conclusion}
\label{section:conclusion}
We studied the existence and uniqueness of a solution of the equation of a smooth neural field model of structure tensors to model the representation and processing of texture and edges in the visual area V1. We also detailed the analysis of the stationary solutions of this nonlinear integro-differential equation. In both cases functional analysis and the theory of ordinary differential equations have allowed us to introduce the framework in which our equations are well-posed and to begin to characterize their solutions. This is interesting and important in itself but should also be useful for future investigations such as bifurcation analysis.\\
We have completed our study by constructing and analysing spatially localised bumps in the high-gain limit of the sigmoid function. It is true that networks with Heaviside nonlinearities are not very realistic from the neurobiological perspective and lead to difficult mathematical considerations. However, taking the high-gain limit is instructive since it allows the explicit construction of stationary solutions which is impossible with sigmoidal nonlinearities. We constructed what we called a hyperbolic radially symmetric stationary-pulse and presented a linear stability analysis adapted from \cite{folias-bressloff:04}.\\
Finally, we illustrated our theoretical results with numerical simulations based on rigorously defined numerical schemes. We hope that our numerical experiments will lead to new and exciting investigations such as a thorough study of the bifurcations of the solutions of our equations with respect to such parameters as the slope of the sigmoid and the width of the connectivity function.\\

\noindent
{\bf Acknowledgments}\\ 
This work was partially funded by the ERC advanced grant NerVi number 227747.

\appendix

\section{Volume element in structure tensor space}\label{appendix:volume}

Let $\T$ be a structure tensor
\[
 \T=\left[
	\begin{array}{cc}
	 x_1 & x_3\\
	 x_3  & x_2
	\end{array}
\right],
\]
$\Delta^2$ its determinant, $\Delta \geq 0$. $\T$ can be written
\[
 \T=\Delta \tT,
\]
where $\tT$ has determinant 1. Let $z=z_1+iz_2$ be the complex number representation of $\tT$ in the
Poincaré disk $\D$.
In this part of the appendix, we present a simple form for the volume element in full structure tensor space, when parametrized as $(\Delta,z)$. 

\begin{prop}\label{prop:volel}
 The volume element in $(\Delta,z_1,z_2)$ coordinates is 
\bq\label{eq:volume}
 dV=8\sqrt{2}\,\frac{d\Delta}{\Delta}\, \frac{dz_1\,dz_2}{(1-|z|^2)^2}
\eq
\end{prop}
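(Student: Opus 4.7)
The plan is to reduce to a flat-coordinate computation and then carry out two successive changes of variables: first from $(x_1,x_2,x_3)$ to $(\Delta,x,y)$ using an upper half plane parametrization of $\text{SSPD}(2)$, and then from $(x,y)$ to $(z_1,z_2)$ via the Cayley transform. The starting point is the Riemannian volume form on $\text{SPD}(2)$ induced by the natural invariant metric $ds^2 = \operatorname{tr}((T^{-1}dT)^2)$. A direct calculation at $T = I$ gives $ds^2|_I = dx_1^2 + dx_2^2 + 2\,dx_3^2$, so $\sqrt{\det g}(I) = \sqrt{2}$; invariance under the action $T \mapsto g T g^\top$ and the determinant behavior under this action then yield
$$d\T \;=\; \frac{\sqrt{2}}{\Delta^{3}}\,dx_1\,dx_2\,dx_3.$$

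Next I would choose the explicit parametrization $\tilde{\T}(x,y) = \frac{1}{y}\begin{pmatrix} 1 & x \\ x & x^2+y^2 \end{pmatrix}$ of $\text{SSPD}(2)$ by the upper half plane $\mathbb{H} = \{x+iy : y>0\}$. Writing $\T = \Delta\tilde\T$ gives $x_1 = \Delta/y$, $x_3 = \Delta x/y$, $x_2 = \Delta(x^2+y^2)/y$, and a (tedious but mechanical) $3\times 3$ Jacobian computation yields
$$\left|\frac{\partial(x_1,x_2,x_3)}{\partial(\Delta,x,y)}\right| = \frac{2\,\Delta^{2}}{y^{2}}.$$
Substituting into the expression for $d\T$ gives $d\T = 2\sqrt{2}\,\frac{d\Delta}{\Delta}\,\frac{dx\,dy}{y^2}$, a product of the invariant measure on $\R^+$ and the standard hyperbolic area on $\mathbb{H}$.

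Finally, I would transport the $\mathbb{H}$-factor to the Poincaré disc $\D$ via the Cayley transform $z = (\tau-i)/(\tau+i)$, $\tau = x+iy$. Using $|dz|^2 = 4|d\tau|^2/|\tau+i|^4$ and $1-|z|^2 = 4y/|\tau+i|^2$, one obtains $\frac{dx\,dy}{y^2} = \frac{4\,dz_1\,dz_2}{(1-|z|^2)^2}$. Combining everything yields the announced formula $d\T = 8\sqrt{2}\,\frac{d\Delta}{\Delta}\,\frac{dz_1\,dz_2}{(1-|z|^2)^2}$.

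The main obstacle is bookkeeping the constants. The distance $d_0$ written in the body of the paper splits as a product metric, but the hyperbolic factor is normalized so that $d_2(0,z) = \operatorname{arctanh}|z|$ rather than $2\operatorname{arctanh}|z|$, and a naive product-metric volume computation from $d_0$ alone would miss the factor of $8$. The factor is recovered by starting from the $\operatorname{tr}((T^{-1}dT)^2)$ volume (whose hyperbolic component corresponds to the $\mathbb{H}$-area $dx\,dy/y^2$, differing from the paper's $\D$-area $dz_1dz_2/(1-|z|^2)^2$ by exactly a factor of $4$), and keeping careful track of the $\sqrt{2}$ arising at the identity together with the factor $2$ from the Jacobian.
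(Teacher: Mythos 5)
Your proposal is correct and follows essentially the same route as the paper: both start from the affine-invariant metric $\operatorname{tr}\bigl((\T^{-1}d\T)^2\bigr)$ to obtain $d\T=\sqrt{2}\,\Delta^{-3}\,dx_1\,dx_2\,dx_3$ and then perform a change of variables to $(\Delta,z_1,z_2)$. The only difference is that you compute the Jacobian in two stages (through the upper half-plane parametrization of $\text{SSPD}(2)$, contributing the factor $2$, and then the Cayley transform, contributing the factor $4$), whereas the paper uses a single explicit parametrization of $\text{SSPD}(2)$ by the disc and one $3\times 3$ Jacobian equal to $8\Delta^2/(1-|z|^2)^2$; the constants agree.
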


\begin{proof}
In order to compute the volume element in $(\Delta,z_1,z_2)$ space, we need to express the metric $g_\T$ in these coordinates.
This is obtained from the inner product in the tangent space $T_\T$ at point $\T$ of ${\rm SDP}(2)$. The tangent space is the
set ${\rm S}(2)$ of symmetric matrices and the inner product is defined by:
\[
 g_\T( A, B )={\rm tr}(\T^{-1} A \T^{-1} B),\quad A,\,B \in {\rm S}(2),
\]
We note that $g_\T( A, B )=g_{\tT}(A,B)/\Delta^2$.
We note $g$ instead of $g_{\tT}$.
A basis of $T_\T$ (or $T_{\tT}$ for that matter) is given by:
\[
 \frac{\partial}{\partial x_1}=
	\left[
	\begin{array}{cc}
	 1 & 0\\
	 0  & 0
	\end{array}
	\right] \quad 
\frac{\partial}{\partial x_2}=
	\left[
	\begin{array}{cc}
	 0 & 0\\
	 0  & 1
	\end{array}
	\right] \quad
\frac{\partial}{\partial x_3}=
	\left[
	\begin{array}{cc}
	 0 & 1\\
	 1  & 0
	\end{array}
\right],
\]
and the metric is given by:
\[
 g_{ij}=g_{\tT}(\frac{\partial}{\partial x_i},\frac{\partial}{\partial x_j}),\,i,j=1,2,3
\]
The determinant $G_\T$ of $g_\T$ is equal to $G/\Delta^6$, where $G$ is the determinant of $g=g_{\tT}$. $G$ is found to be equal to 2.
The volume element is thus:
\[
 dV=\frac{\sqrt{2}}{\Delta^3}\,dx_1\,dx_2\,dx_3
\]
We then use the relations:
\[
 x_1=\Delta \tilde x_1 \quad  x_2=\Delta \tilde x_2 \quad x_3=\Delta \tilde x_3,
\]
where $\tilde x_i$, $i=1,2,3$ is given by:
\[
  \left\{
\begin{array}{lcl}
 \tilde x_1 & = & \frac{(1+z_1)^2+z_2^2}{1-z_1^2-z_2^2}\\
 \tilde x_2 & = & \frac{(1-z_1)^2+z_2^2}{1-z_1^2-z_2^2}\\
\tilde x_3 & = & \frac{2z_2}{1-z_1^2-z_2^2}
\end{array}
\right.,
\]
The determinant of the Jacobian of the transformation $(x_1,x_2,x_3) \to (\Delta,z_1,z_2)$ is found to be equal to:
\[
 -\frac{8\Delta^2}{(1-|z|^2)^2}
\]
Hence, the volume element in $(\Delta,z_1,z_2)$ coordinates is 
\bqs
 dV=8\sqrt{2}\,\frac{d\Delta}{\Delta}\, \frac{dz_1\,dz_2}{(1-|z|^2)^2}
\eqs

\end{proof}

\section{Global existence of solutions}

\begin{thm}\label{thm:cauchylipschitz}
 Let $\mO$ be an open connected set of a real Banach space $\mF$ and $J$ be an open interval of $\R$. We consider the initial value problem:
\bq \left\{ \begin{array}{lcl}
\mV'(t)&=&f(t,\mV(t)),\\
\mV(t_0)&=&\mV_0
\end{array}
\right.
\label{eq:cauchylipschitz}
\eq
We suppose that $f\in\mathcal{C}\left(J\times \mO,\mF \right)$ and is locally Lipschitz with respect to its second argument. Then for all $(t_0,\mV_0)\in J\times\mO$, there exists $\tau>0$ and $\mV\in\mC^1\left( ]t_\tau,t_0+\tau[,\mO\right)$ unique solution of (\ref{eq:cauchylipschitz}).
\end{thm}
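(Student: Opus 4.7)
The plan is to reduce the differential equation to a fixed-point problem for an integral operator and to apply the Banach fixed point theorem on a well-chosen complete metric space. This is the classical Picard-Lindelöf argument, adapted to the Banach space setting.

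First I would fix $(t_0,V_0)\in J\times \mO$ and use the fact that $\mO$ is open and $f$ is continuous to pick $r>0$ and a compact subinterval $[t_0-a,t_0+a]\subset J$ such that the closed ball $\overline{B}(V_0,r)\subset\mO$, $f$ is bounded by some constant $M$ on $[t_0-a,t_0+a]\times \overline{B}(V_0,r)$, and $f$ is Lipschitz in its second argument on this set with constant $L$. This is possible by the local Lipschitz assumption and compactness. Next, integrating \eqref{eq:cauchylipschitz} formally, observe that a $\mC^1$ solution on $]t_0-\tau,t_0+\tau[$ with values in $\mO$ is equivalent to a continuous solution of
\bqs
V(t)=V_0+\int_{t_0}^{t}f(s,V(s))\,ds.
\eqs

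Then I would choose $\tau>0$ small enough so that $\tau\le a$, $M\tau\le r$, and $L\tau<1$, and define the functional space
\bqs
X=\{\phi\in\mC\left([t_0-\tau,t_0+\tau],\mF\right)\;:\;\phi(t)\in\overline{B}(V_0,r)\ \forall t\},
\eqs
endowed with the sup norm. Since $\mF$ is a Banach space, $X$ is a complete metric space (closed subset of a Banach space). On $X$ I would define the operator
\bqs
\Phi(\phi)(t)=V_0+\int_{t_0}^{t}f(s,\phi(s))\,ds.
\eqs
The two things to verify are that $\Phi$ maps $X$ into itself, which follows from $\|\Phi(\phi)(t)-V_0\|_{\mF}\le M|t-t_0|\le M\tau\le r$, and that $\Phi$ is a strict contraction on $X$, which follows from the Lipschitz condition:
\bqs
\|\Phi(\phi_1)(t)-\Phi(\phi_2)(t)\|_{\mF}\le L|t-t_0|\sup_{s}\|\phi_1(s)-\phi_2(s)\|_{\mF}\le L\tau\,\|\phi_1-\phi_2\|_{\infty}.
\eqs
Since $L\tau<1$, the Banach fixed point theorem yields a unique fixed point $V\in X$.

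To conclude, I would note that a fixed point of $\Phi$ is automatically continuous, hence the integrand $s\mapsto f(s,V(s))$ is continuous, so $V$ is actually $\mC^1$ on $]t_0-\tau,t_0+\tau[$ with derivative $f(t,V(t))$, and $V(t_0)=V_0$. Uniqueness in the class of $\mC^1$ solutions valued in $\mO$ follows either directly from the uniqueness of the fixed point after restricting $\tau$ further if necessary, or by a standard Gronwall argument applied to the difference of two solutions. The main obstacle is essentially bookkeeping: carefully choosing $\tau$ to simultaneously satisfy the three constraints (stay in $J$, stay in $\overline{B}(V_0,r)\subset\mO$, and have contraction constant strictly less than one), and making sure the local Lipschitz property actually delivers a uniform Lipschitz constant on the compact cylinder $[t_0-a,t_0+a]\times\overline{B}(V_0,r)$, which uses a standard compactness-and-covering argument.
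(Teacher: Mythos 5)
Your argument is the classical Picard--Lindel\"of proof (reduction to the integral equation, contraction mapping on a closed ball of $\mC([t_0-\tau,t_0+\tau],\mF)$), and it is correct; note that the paper itself states this theorem as a known result and offers no proof, so there is nothing to compare against beyond observing that this is the standard route. One small imprecision to fix: in an infinite-dimensional Banach space the cylinder $[t_0-a,t_0+a]\times\overline{B}(V_0,r)$ is \emph{not} compact, so you cannot obtain the bound $M$ or the uniform Lipschitz constant $L$ by a ``compactness-and-covering argument'' on that set. Instead, the local Lipschitz hypothesis directly supplies a neighbourhood $W$ of $(t_0,V_0)$ and a constant $L$ with $\|f(t,x)-f(t,y)\|_{\mF}\leq L\|x-y\|_{\mF}$ on $W$, and then $\|f(t,x)\|_{\mF}\leq \sup_{|t-t_0|\leq a}\|f(t,V_0)\|_{\mF}+Lr=:M$, where the supremum is finite because $t\mapsto f(t,V_0)$ is continuous on the compact interval $[t_0-a,t_0+a]$. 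With that repair the rest of your argument (choice of $\tau$ with $\tau\leq a$, $M\tau\leq r$, $L\tau<1$; invariance and contractivity of $\Phi$; bootstrap to $\mC^1$; uniqueness via Gronwall or the fixed point) goes through as written.
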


\begin{lem}
 Under hypotheses of theorem \ref{thm:cauchylipschitz}, if $\mV_1\in\mC^1(J_1,\mO)$ and $\mV_2\in\mC^1(J_2,\mO)$ are two solutions and if there exists $t_0\in J_1 \cap J_2$ such that $\mV_1(t_0)= \mV_2(t_0)$ then:
\bqs
\mV_1(t)= \mV_2(t) \text{ for all } t\in J_1 \cap J_2
\eqs
\end{lem}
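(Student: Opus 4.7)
My plan is to prove this lemma by a standard connectedness argument on the set where the two solutions agree. The underlying intuition is that local uniqueness supplied by theorem \ref{thm:cauchylipschitz} propagates to a global statement on any connected domain of definition.

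First I would introduce the coincidence set
\bqs
A = \{t \in J_1 \cap J_2 \;:\; \mV_1(t) = \mV_2(t)\}.
\eqs
The goal is to show $A = J_1 \cap J_2$. Since $J_1$ and $J_2$ are open intervals, so is their intersection, which is in particular a connected topological space. It therefore suffices to verify that $A$ is nonempty, open, and closed in $J_1 \cap J_2$. Nonemptiness is immediate from the hypothesis $t_0 \in A$, and closedness follows from the continuity of $\mV_1 - \mV_2$ on $J_1 \cap J_2$ together with the fact that $A$ is the preimage of the closed set $\{0\}$ under this continuous mapping.

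The main step, and the one requiring the hypotheses of theorem \ref{thm:cauchylipschitz}, is openness. Fix $t_1 \in A$ and let $\mW := \mV_1(t_1) = \mV_2(t_1) \in \mO$. Both $\mV_1$ and $\mV_2$ are $\mC^1$ solutions of the initial value problem
\bqs
\mV'(t) = f(t,\mV(t)), \qquad \mV(t_1) = \mW,
\eqs
on a neighborhood of $t_1$. Theorem \ref{thm:cauchylipschitz} applied at the point $(t_1,\mW) \in J \times \mO$ produces $\tau > 0$ and a \emph{unique} solution of this initial value problem defined on $]t_1 - \tau, t_1 + \tau[$. The restrictions of $\mV_1$ and $\mV_2$ to a sufficiently small open neighborhood of $t_1$ contained in $J_1 \cap J_2 \cap ]t_1-\tau,t_1+\tau[$ must therefore coincide with this unique local solution, hence with each other. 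This neighborhood is contained in $A$, establishing that $A$ is open.

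The only delicate point is making sure theorem \ref{thm:cauchylipschitz} can actually be invoked at an arbitrary $t_1 \in A$; this requires checking that $\mW \in \mO$ and $t_1 \in J$, both of which are built into the assumption that $\mV_1,\mV_2$ are $\mO$-valued solutions on subintervals of $J$. Once that is observed the three topological properties combine, via connectedness of $J_1 \cap J_2$, to give $A = J_1 \cap J_2$, which is exactly the conclusion $\mV_1(t) = \mV_2(t)$ for all $t \in J_1 \cap J_2$.
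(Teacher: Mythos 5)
Your argument is correct: defining the coincidence set $A=\{t\in J_1\cap J_2 : \mV_1(t)=\mV_2(t)\}$ and showing it is nonempty, closed (continuity of $\mV_1-\mV_2$) and open (local uniqueness from theorem \ref{thm:cauchylipschitz}) in the connected interval $J_1\cap J_2$ is the standard proof of this statement. Note that the paper itself gives no proof of this lemma --- it is recalled in the appendix as a classical fact of Cauchy--Lipschitz theory --- so there is nothing to compare against; your write-up supplies the missing argument correctly. One small point worth tightening in the openness step: the uniqueness assertion of theorem \ref{thm:cauchylipschitz} concerns solutions defined on the full interval $]t_1-\tau,t_1+\tau[$, whereas you apply it to restrictions on a possibly smaller neighborhood. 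The clean fix is to first shrink $\tau$ so that $]t_1-\tau,t_1+\tau[\subset J_1\cap J_2$ (the theorem's conclusion persists for any smaller $\tau$) and then invoke uniqueness on that interval; alternatively, one can bypass the theorem and run a direct Gronwall estimate using the local Lipschitz constant of $f$ near $(t_1,\mV_1(t_1))$.
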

This lemma shows the existence of a larger interval $J_0$ on which the initial value problem (\ref{eq:cauchylipschitz}) has a unique solution. This solution is called the maximal solution.

\begin{thm}\label{thm:bouts}
 Under hypotheses of theorem \ref{thm:cauchylipschitz}, let $\mV\in\mC^1(J_0,\mO)$ be a maximal solution. We note by $b$ the upper bound of $J$ and $\beta$ the upper bound of $J_0$. Then either $\beta=b$ or for all compact set $\mK\subset \mO$, there exists $\eta<\beta$ such that:
\bqs
\mV(t)\in\mO | \mK, \text{ for all } t\geq \eta \text{ with } t\in J_0 \cdot
\eqs
We have the same result with the lower bounds.
\end{thm}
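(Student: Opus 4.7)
The plan is to prove the contrapositive: assuming $\beta < b$ and that the escape-from-compacts conclusion fails for some compact $\mK \subset \mO$, I will construct a strict extension of $\mV$ past $\beta$, contradicting maximality. The negation of the conclusion yields a sequence $(t_n) \subset J_0$ with $t_n \nearrow \beta$ and $\mV(t_n) \in \mK$ for every $n$. Since $\mK$ is compact in the Banach space $\mF$, after extracting a subsequence I may assume $\mV(t_n) \to \mV_* \in \mK \subset \mO$. Since $\beta < b = \sup J$ and $J$ is open, $\beta \in J$, so $(\beta, \mV_*)$ is an interior point of $J \times \mO$.

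The key ingredient I will need is a \emph{uniform} version of local existence: there exist a neighborhood $U \times B$ of $(\beta, \mV_*)$ in $J \times \mO$ and a time $\tau > 0$ such that, for every $(s, \xi) \in U \times B$, the Cauchy problem $\mV'(t) = f(t, \mV(t))$ with $\mV(s) = \xi$ admits a unique $\mC^1$ solution on $[s - \tau, s + \tau]$. This is a standard refinement of theorem \ref{thm:cauchylipschitz} obtained by inspecting the Picard contraction. Concretely, one fixes a closed ball $\overline{B(\mV_*, r)} \subset \mO$ and a closed sub-interval of $J$ around $\beta$ on which $f$ is bounded by some $M$ and $L$-Lipschitz in its second argument; then any $\tau$ satisfying $2\tau M \leq r$ and $\tau L < 1$ works uniformly for initial data whose state lies in $B(\mV_*, r/2)$ and whose time lies in a sufficiently small neighborhood $U$ of $\beta$.

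With this lemma in hand, I pick $n$ large enough that $t_n \in U$, $\mV(t_n) \in B$, and $t_n + \tau > \beta$; all three are possible since $t_n \to \beta$ and $\mV(t_n) \to \mV_*$. The uniform lemma then produces a $\mC^1$ solution $\widetilde{\mV}$ on $[t_n, t_n + \tau]$ with $\widetilde{\mV}(t_n) = \mV(t_n)$. The uniqueness lemma stated just before the theorem forces $\widetilde{\mV} \equiv \mV$ on $[t_n, \beta) \cap J_0$, so the concatenation of $\mV$ on $J_0$ and $\widetilde{\mV}$ on $[\beta, t_n + \tau]$ is a $\mC^1$ solution of the Cauchy problem on an open interval strictly containing $J_0$, contradicting the maximality of $(J_0, \mV)$. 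The statement for the lower bound follows by applying the same argument to $\mathbf{U}(t) := \mV(-t)$, which satisfies an analogous initial value problem with right-hand side $(t, x) \mapsto -f(-t, x)$ that meets the same hypotheses.

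The main obstacle is the uniform local existence lemma; the compactness extraction and the maximality contradiction are then routine. The lemma itself is classical, but deriving it from theorem \ref{thm:cauchylipschitz} requires some care: one must check that the radius and time step in the Picard contraction depend only on uniform bounds of $f$ and its Lipschitz constant on a fixed neighborhood of $(\beta, \mV_*)$, so that the existence time $\tau$ can be chosen independently of where in that neighborhood the initial data $(s, \xi)$ lies.
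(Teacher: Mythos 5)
Your proposal is correct, but there is nothing in the paper to compare it against: theorem \ref{thm:bouts} is stated in the appendix without proof, as a classical background result from the theory of ordinary differential equations in Banach spaces (it is invoked, e.g., in the proof of the global existence proposition for homogeneous solutions). Your argument is the standard textbook proof of the ``escape from compact sets'' alternative, and it is sound: the negation of the conclusion produces $t_n \nearrow \beta$ with $\mV(t_n)$ in the compact set $\mK$, sequential compactness gives a limit $\mV_* \in \mO$, and the uniform local existence lemma --- existence time $\tau$ depending only on a bound $M$ and a Lipschitz constant $L$ for $f$ on a fixed neighborhood of $(\beta,\mV_*)$, both of which are available from continuity and local Lipschitz continuity of $f$ --- lets you restart the flow at $(t_n, \mV(t_n))$ for $n$ large and push past $\beta$, contradicting maximality via the uniqueness lemma. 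You correctly identify the uniform existence time as the only point requiring care (a bare application of theorem \ref{thm:cauchylipschitz} at each $(t_n,\mV(t_n))$ would give existence times that could shrink to zero), and your conditions $2\tau M \leq r$, $\tau L < 1$ handle it. Two minor points worth making explicit in a written-up version: the concatenated solution should be restricted to an open interval $(\inf J_0,\, t_n+\tau)$ so that it is comparable to the maximal solution, and one should take the local solution on the two-sided interval $[t_n-\tau, t_n+\tau]$ so that $t_n$ is interior and the uniqueness lemma applies directly. Neither is a gap.
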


\begin{thm}\label{thm:appendixglobal}
 We suppose $f\in\mC \left( J\times \mF,\mF \right)$ and is globally Lipschitz with respect to its second argument. Then for all $(t_0,\mV_0)\in J\times \mF$, there exists a unique $\mV\in\mC^1(J,\mF)$ solution of \eqref{eq:cauchylipschitz}.
\end{thm}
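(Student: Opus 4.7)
My plan is to combine the local Cauchy--Lipschitz result of Theorem~\ref{thm:cauchylipschitz} with Gronwall's inequality and a Cauchy-sequence extension argument. Apply Theorem~\ref{thm:cauchylipschitz} with $\mO=\mF$: there exists a unique maximal solution $\mV\in\mC^1(J_0,\mF)$ with $J_0=(\alpha_*,\beta)\subset J$ containing $t_0$. Writing $b$ for the upper endpoint of $J$, I will show $\beta=b$; the lower endpoint is handled symmetrically.

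Suppose for contradiction that $\beta<b$, so $[t_0,\beta]$ is a compact subinterval of $J$. Let $L$ denote the global Lipschitz constant of $f$ in its second argument. By continuity of $f$ the map $s\mapsto \|f(s,0)\|_{\mF}$ is continuous on $J$, hence bounded on $[t_0,\beta]$ by some constant $C_0$. Rewriting the equation in integral form and using $\|f(s,x)\|_\mF\le\|f(s,0)\|_\mF+L\|x\|_\mF$ gives, for $t\in[t_0,\beta)$,
\bqs
\|\mV(t)\|_{\mF}\;\leq\;\|\mV_0\|_{\mF}+C_0(\beta-t_0)+L\int_{t_0}^{t}\|\mV(s)\|_{\mF}\,ds.
\eqs
Gronwall's lemma then yields a uniform bound $\|\mV(t)\|_{\mF}\leq M$ on $[t_0,\beta)$ with $M=\bigl(\|\mV_0\|_{\mF}+C_0(\beta-t_0)\bigr)e^{L(\beta-t_0)}$.

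Consequently $\|f(s,\mV(s))\|_{\mF}\leq C_0+LM$ on $[t_0,\beta)$, and for $t_0\leq s<t<\beta$,
\bqs
\|\mV(t)-\mV(s)\|_{\mF}\;\leq\;(C_0+LM)|t-s|.
\eqs
So $\mV$ is uniformly continuous on $[t_0,\beta)$; by completeness of $\mF$ it admits a limit $\mV^{\star}\in\mF$ at $\beta$. A fresh application of the local Cauchy--Lipschitz theorem with initial data $(\beta,\mV^{\star})$ produces a solution on some interval $(\beta-\tau,\beta+\tau)$; the uniqueness lemma stated after Theorem~\ref{thm:cauchylipschitz} allows gluing with $\mV$, extending the solution strictly past $\beta$ and contradicting maximality. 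Hence $\beta=b$, and the same argument at the lower endpoint completes the proof.

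The only delicate point is the passage to the limit at $\beta$: in an infinite-dimensional Banach space the compact-set formulation of Theorem~\ref{thm:bouts} is awkward to use directly because norm-bounded sets are generally not relatively compact, so I rely on the Cauchy-sequence argument above, which only needs completeness of $\mF$. Everything else is routine bookkeeping, and the global Lipschitz hypothesis is exactly what makes the Gronwall estimate work on the whole a-priori interval $[t_0,\beta]$ rather than on a potentially smaller subinterval.
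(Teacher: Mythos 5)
Your argument is correct. Note, however, that the paper states Theorem~\ref{thm:appendixglobal} without proof --- it is quoted in the appendix as a standard result of ODE theory in Banach spaces --- so there is no authorial proof to compare against. What you supply is the classical argument: local existence from Theorem~\ref{thm:cauchylipschitz}, an a~priori bound on any compact subinterval via the global Lipschitz estimate $\|f(s,x)\|_{\mF}\leq\|f(s,0)\|_{\mF}+L\|x\|_{\mF}$ and Gronwall, the resulting Lipschitz bound on $\mV$ giving a limit at the endpoint by completeness of $\mF$, and a restart-plus-gluing step contradicting maximality. Each step checks out, and your remark that the compact-escape criterion of Theorem~\ref{thm:bouts} is the wrong tool in an infinite-dimensional $\mF$ (where norm-bounded sets need not be relatively compact) is exactly right --- the paper itself only invokes Theorem~\ref{thm:bouts} in the scalar case $\mF=\R$, where that issue does not arise. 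The only cosmetic omission is that uniqueness of the global solution on all of $J$ should be flagged as an immediate consequence of the uniqueness lemma stated after Theorem~\ref{thm:cauchylipschitz}, since any solution on $J$ through $(t_0,\mV_0)$ must coincide with the maximal one.
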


%

\section{Proof of lemma \ref{lemma:mexican}}\label{appendix:mexican}
In this section we prove the following lemma.
\begin{lem}
 When $W$ is a mexican hat function of $d_2(\T,\T')$ and independent of $t$, then:
\bqs
\overline{W}=\frac{\pi^{\frac{3}{2}}}{2}\left(\sigma_1e^{2\sigma_1^2}\textbf{erf}\left(\sqrt{2}\sigma_1\right)-A\sigma_2e^{2\sigma_2^2}\textbf{erf}\left(\sqrt{2}\sigma_2\right) \right)
\eqs
where $\textbf{erf}$ is the error function defined as:
\bqs
\textbf{erf}(x)=\frac{2}{\sqrt{\pi}}\int_0^x e^{-u^2}du
\eqs
\end{lem}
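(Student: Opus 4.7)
By Lemma~\ref{lem:invariance}, $\overline{W}$ is independent of $(z,\Delta)$, so I would evaluate it at $z=0$, $\Delta=1$. The Mexican hat is a difference of two Gaussians, so by linearity it suffices to compute, for a single $\sigma>0$, the quantity
\bqs
J(\sigma)\;\overset{def}{=}\;\frac{1}{\sqrt{2\pi\sigma^2}}\int_0^{+\infty}\!\!\int_{\D}\exp\!\Bigl(-\tfrac{2(\log\Delta')^2+d_2^2(z',0)}{2\sigma^2}\Bigr)\frac{d\Delta'}{\Delta'}\,dm(z'),
\eqs
and then take $\overline{W}=J(\sigma_1)-A\,J(\sigma_2)$.

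The key observation is that the exponent splits additively in $\log\Delta'$ and $d_2(z',0)$, so the integral factorises into a one-dimensional piece and a hyperbolic piece. For the first, the substitution $u=\log\Delta'$ turns $\int_0^{+\infty}e^{-(\log\Delta')^2/\sigma^2}\,d\Delta'/\Delta'$ into a standard Gaussian integral on $\R$ equal to $\sigma\sqrt{\pi}$. For the hyperbolic piece I would pass to polar coordinates $z'=\rho e^{i\theta}$ and then change variable to $r=d_2(z',0)=\operatorname{arctanh}\rho$; a short computation using $1-\rho^2=\operatorname{sech}^2 r$ yields the volume form $dm(z')=\tfrac{1}{2}\sinh(2r)\,dr\,d\theta$. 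The angular integral contributes $2\pi$, and one is left with
\bqs
\pi\int_0^{+\infty}e^{-r^2/(2\sigma^2)}\sinh(2r)\,dr.
\eqs

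The real work is this last radial integral. Writing $\sinh(2r)=(e^{2r}-e^{-2r})/2$ and completing the square in each exponent via $-r^2/(2\sigma^2)\pm 2r=-(r\mp 2\sigma^2)^2/(2\sigma^2)+2\sigma^2$, one obtains two shifted Gaussians of equal weight. After the substitution $v=(r\mp 2\sigma^2)/(\sigma\sqrt{2})$ they combine into
\bqs
\tfrac{\pi}{2}\,e^{2\sigma^2}\sigma\sqrt{2}\int_{-\sigma\sqrt{2}}^{\sigma\sqrt{2}}e^{-v^2}\,dv\;=\;\tfrac{\pi^{3/2}}{\sqrt{2}}\,\sigma\,e^{2\sigma^2}\,\textbf{erf}(\sqrt{2}\sigma),
\eqs
since the $(-\infty,-\sigma\sqrt 2)$ and $(\sigma\sqrt 2,+\infty)$ tails cancel. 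Multiplying the three factors together with the normalising constant $1/\sqrt{2\pi\sigma^2}$ gives $J(\sigma)=\tfrac{\pi^{3/2}}{2}\sigma e^{2\sigma^2}\textbf{erf}(\sqrt{2}\sigma)$, and subtracting the analogous expression with coefficient $A$ for $\sigma_2$ yields \eqref{eq:mexhat}.

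The only delicate step is the completing-the-square argument in the radial integral: one must carefully handle the two shifted Gaussians with shifts $\pm 2\sigma^2$ and the change of lower limit $0\mapsto \mp \sigma\sqrt{2}$ so that the cancellation of the tails produces a symmetric integral recognisable as $\textbf{erf}$. Everything else is bookkeeping.
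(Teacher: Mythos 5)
Your proposal is correct and follows essentially the same route as the paper's proof in Appendix~\ref{appendix:mexican}: separation of the $\log\Delta'$ Gaussian factor (giving the constant $1/\sqrt{2}$ after normalisation), reduction of the hyperbolic integral to $\pi\int_0^\infty e^{-r^2/(2\sigma^2)}\sinh(2r)\,dr$ via $r=\operatorname{arctanh}|z'|$, and completion of the square to produce the symmetric $\textbf{erf}$ integral. The computations check out and reproduce \eqref{eq:mexhat} exactly.
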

\begin{proof}
We consider the following double integrals:
\bq
\Xi_i=\int_{0}^{+\infty}\int_{\D}\dfrac{1}{\sqrt{2\pi\sigma_i^2}}e^{-\dfrac{(\log \Delta-\log \Delta')^2}{\sigma_i^2}}e^{-\dfrac{d_2^{2}(z,z')}{2\sigma_i^2}}\dfrac{d\Delta'}{\Delta'}\dfrac{dz_1'dz_2'}{(1-|z'|^2)^2}\quad,i=1,2,
\label{eq:xi}
\eq
so that:
\bqs
\overline{W}=\Xi_1-A\Xi_2
\eqs
Since the variables are separable, we have:
\bqs
\Xi_i=\Bigg(\int_{0}^{+\infty}\dfrac{1}{\sqrt{2\pi\sigma_i^2}}e^{-\dfrac{(\log \Delta-\log \Delta')^2}{\sigma_i^2}}\dfrac{d\Delta'}{\Delta'}\Bigg)\Bigg(\int_{\D}e^{-\dfrac{d_2^{2}(z,z')}{2\sigma_i^2}}\dfrac{dz_1'dz_2'}{(1-|z'|^2)^2}\Bigg)
\eqs
One can easily see that:
\bqs
\int_{0}^{+\infty}\dfrac{1}{\sqrt{2\pi\sigma_i^2}}e^{-\dfrac{(\log \Delta-\log \Delta')^2}{\sigma_i^2}}\dfrac{d\Delta'}{\Delta'}=\dfrac{1}{\sqrt{2}}
\eqs

We now give a simplified expression for $\Xi_i$. We set $f_i(x)=e^{-\dfrac{x^2}{2\sigma_i^2}}$ and then we have, because of lemma \ref{lem:invariance}:
\bqs
\Xi_i=\dfrac{1}{\sqrt{2}}\int_{\D}f_i(d_2(O,z'))\text{dm}(z')=\dfrac{1}{\sqrt{2}}\int_{\D}f_i(\text{arctanh}(|z'|))\text{dm}(z')
\eqs
\bqs
=\dfrac{1}{\sqrt{2}}\int_{0}^{1}\int_{0}^{2\pi}f_i(\text{arctanh}(r))\dfrac{rdrd\theta}{(1-r^2)^2}=\sqrt{2}\pi\int_{0}^{1}f_i(\text{arctanh}(r))\dfrac{rdr}{(1-r^2)^2}
\eqs
\bqs
=\sqrt{2}\pi\int_{0}^{1}e^{-\dfrac{\text{arctanh}^2(r)}{2\sigma_i^2}}\dfrac{rdr}{(1-r^2)^2}
\eqs
The change of variable $x=\text{arctanh}(r)$ implies $dx=\dfrac{dr}{1-r^2}$ and yields:
\bqs
\Xi_i=\sqrt{2}\pi\int_{0}^{+\infty}e^{-\dfrac{x^2}{2\sigma_i^2}}\dfrac{\tanh(x)}{1-\tanh^2(x)}dx=\sqrt{2}\pi\int_{0}^{+\infty}e^{-\dfrac{x^2}{2\sigma_i^2}}\sinh(x)\cosh(x)dx
\eqs
\bqs
=\dfrac{\pi}{\sqrt{2}}\int_{0}^{+\infty}e^{-\dfrac{x^2}{2\sigma_i^2}}\sinh(2x)dx=\dfrac{\pi}{2\sqrt{2}}\Bigg(\int_{0}^{+\infty}e^{-\dfrac{x^2}{2\sigma_i^2}+2x}dx-\int_{0}^{+\infty}e^{-\dfrac{x^2}{2\sigma_i^2}-2x}dx\Bigg)
\eqs
\bqs
=\dfrac{\pi}{2\sqrt{2}}e^{2\sigma_i^2}\Bigg(\int_{0}^{+\infty}e^{-\dfrac{(x-2\sigma_i^2)^2}{2\sigma_i^2}}dx-\int_{0}^{+\infty}e^{-\dfrac{(x+2\sigma_i^2)^2}{2\sigma_i^2}}dx\Bigg)
\eqs
\bqs
=\dfrac{\pi}{2}\sigma_ie^{2\sigma_i^2}\Bigg(\int_{-\sqrt{2}\sigma_i}^{+\infty}e^{-u^2}du-\int_{\sqrt{2}\sigma_i}^{+\infty}e^{-u^2}du\Bigg)=\dfrac{\pi}{2}\sigma_ie^{2\sigma_i^2}\int_{-\sqrt{2}\sigma_i}^{\sqrt{2}\sigma_i}e^{-u^2}du
\eqs
then we have a simplified expression for $\Xi_i$:
\bqs
\Xi_i=\frac{\pi^{\frac{3}{2}}}{2}\sigma_ie^{2\sigma_i^2}\textbf{erf}(\sqrt{2}\sigma_i)
\eqs

\end{proof}

\section{Isometries of $\D$}\label{section:isom}

We briefly descrbies the isometries of $\D$, i.e the transformations that preserve the distance $d_2$. We refer to the classical textbooks in hyperbolic goemetry for details, e.g, \cite{katok:92}. The direct isometries (preserving the orientation) in $\D$ are the elements of the special unitary group, noted $\text{SU}(1,1)$, of $2\times 2$ Hermitian matrices with determinant equal to $1$. Given:
\bqs
\gamma =\left(\begin{array}{ll}
 \alpha & \beta \\
 \bar\beta & \bar \alpha
\end{array}
 \right) \text{ such that } |\alpha|^2-|\beta|^2=1, 
\eqs
an element of $\text{SU}(1,1)$, the corresponding isometry $\gamma$ in $\D$ is defined by:
\bq
\gamma \cdot z=\frac{\alpha z+\beta}{\bar\beta z+\bar \alpha},\quad z\in\D
\label{eq:corresp}
\eq
Orientation reversing isometries of $\D$ are obtained by composing any transformation \eqref{eq:corresp} with the reflexion $\kappa:z\rightarrow \bar z$. The full symmetry group of the Poincar\'e disc is therefore:
\bqs
\text{U}(1,1)=\text{SU}(1,1)\cup\kappa\cdot \text{SU}(1,1)
\eqs
Let us now describe the different kinds of direct isometries acting in $\D$. We first define the following one parameter subgroups of $\text{SU}(1,1)$:
\bqs
\left\{ \begin{array}{lll}
 K\overset{def}{=}\{\text{rot}_{\phi}=\left( \begin{array}{ll}
 e^{i\frac{\phi}{2}} & 0\\
0 & e^{-i\frac{\phi}{2}}
\end{array}
\right),\phi\in \mathbb{S}^1\}\\
 A\overset{def}{=}\{a_r= \left( \begin{array}{ll}
 \cosh r & \sinh r\\
 \sinh r & \cosh r
\end{array}
\right),r\in\R\}\\
 N\overset{def}{=}\{n_s= \left( \begin{array}{ll}
 1+is & -is\\
 is & 1-is
\end{array}
\right),s\in\R\}
\end{array}
\right.
\eqs
Note that $\text{rot}_{\phi}\cdot z=e^{i\phi}z$ and also $a_r\cdot O=\tanh r$.\\
The group $K$ is the orthogonal group $\text{O}(2)$. Its orbits are concentric circles. It is possible to express each point $z\in\D$ in hyperbolic polar coordinates: $z=\text{rot}_{\phi}a_r\cdot O=\tanh r e^{i\phi}$ and $r=d_2(z,0)$.\\
 The orbits of $A$ converge to the same limit points of the unit circle $\partial\D$, $b_{\pm}=\pm$ when $r\rightarrow\pm \infty$. They are circular arcs in $\D$ going through the points $b_1$ and $b_{-1}$.\\
 The orbits of $N$ are the circles inside $\D$ and tangent to the unit circle at $b_1$. These circles are called \textit{horocycles} with base point $b_1$. $N$ is called the horocyclic group. It is also possible to express each point $z\in\D$ in horocyclic coordinates: $z=n_s a_r\cdot O$, where $n_s$ are the transformations associated with the group $N$ ($s\in\R$) and $a_r$ the transformations associated with the subroup $A$ ($r\in\R$).

\paragraph{Iwasawa decomposition}
 The following decomposition holds, see \cite{iwaniec:02}:
\bqs
\text{SU}(1,1)=KAN
\eqs
This theorem allows us to decompose any isometry of $\D$ as the product of at most thee elements in the groups, $K,A$ and $N$.\\

\end{document}